\pgfplotsset{compat=1.14}
\tikzstyle{black}=[fill=white, draw=black, shape=circle, tikzit fill=white, tikzit draw=black, tikzit shape=circle]
\tikzstyle{blackfilled}=[fill={rgb,255: red,50; green,50; blue,50}, draw=black, shape=circle, tikzit fill={rgb,255: red,50; green,50; blue,50}, tikzit draw=black, tikzit shape=circle]
\numberwithin{equation}{section}
\newtheorem{thm}{Theorem}[section]
\newtheorem{prop}[thm]{Proposition}
\newtheorem{cor}[thm]{Corollary}
\newtheorem{exm}[thm]{Example}
\newtheorem{df}[thm]{Definition}
\newtheorem{rem}[thm]{Remark}
\newtheorem{nt}[thm]{Note}
\begin{document}
	\title{On Some General Operators of  Hypergraphs}
	\email[ Banerjee]{\textit {{\scriptsize anirban.banerjee@iiserkol.ac.in}}}
	
	\author[Banerjee]{Anirban Banerjee } 
	\address[Banerjee]{Department of Mathematics and Statistics, Indian Institute of Science Education and Research Kolkata, Mohanpur-741246, India}
		\email[Parui ]{\textit {{\scriptsize  samironparui@gmail.com}}}
	
	\author[ Parui]{ Samiron Parui} 
	\address[Parui]{Department of Mathematics and Statistics, Indian Institute of Science Education and Research Kolkata, Mohanpur-741246, India}
	\date{\today}
	\keywords{Hypergraph, Graph, Adjacency matrix, Laplacian matrix,Eigenvalue and Eigenvector of Matrices related to hypergraph, Diffusion on hypergraphs, Matrices related to hypergraph}
	
		\subjclass[2020]{Primary 
	 05C65, 
	 	05C50, 
	; Secondary 37C99, 
	39A12, 
	34D06, 
	92B25 
	}

	\maketitle
	\begin{abstract}
Here we introduce connectivity operators, namely, diffusion operators, general Laplacian operators, and general adjacency operators for hypergraphs. These operators are generalisations of some conventional notions of apparently different connectivity matrices associated with hypergraphs. 
In fact, we introduce here a unified framework for studying different variations of the connectivity operators associated with hypergraphs at the same time. 
Eigenvalues and corresponding eigenspaces of the general connectivity operators associated with some classes of hypergraphs are computed. Applications such as random walks on hypergraphs, dynamical networks, and disease transmission on hypergraphs are studied in the perspective of our newly introduced operators. We also derive spectral bounds for the weak connectivity number, degree of vertices, maximum cut, bipartition width, and isoperimetric constant of hypergraphs.

	\end{abstract}
	\hspace{10pt}
	
	\section{Introduction}
\textit{Graph} is a well studied and widely used notion in the realm of mathematics.  \textit{Hypergraph}, a generalization of a graph, is also explored extensively. A hypergraph $G$ is an order pair of sets $(V,E)$, where 
     $V (\neq\emptyset )$ is the set of vertices, and any element $e \in E$, called a hyperedge of $G$, is 
     a nonempty subset of $V$. 
     Thus a graph is a special case of hypergraph where $|e|=2$ for all $e\in E$.
    A singleton hyperedge is said to be a loop. In our work we consider hypergraphs without any loop, that is, $|e|\ge 2$, $\forall e \in E$.
In this work, we are going to introduce some general notions of connectivity operators associated with hypergraphs. These notions are so exhaustive that can incorporate multiple conventional notions of connectivity matrices of hypergraphs. We provide constructions to determine eigenvectors and their eigenspaces of general connectivity operators associated with a class of hypergraphs. Since our approach pivot around some real-valued functions on the vertex set, using our methods, just looking at the structure of the hypergraphs one can determine the eigenvalues and their eigenspaces of the above-mentioned operators for some classes of hypergraphs.

The last decade witnessed a revolution in hypergraph theory when different tensors or hypermatrices associated with hypergraphs are studied extensively in \cite{qi2017tensor, MR3598572,robeva2019duality,zhang2017some} and references therein.
Despite promising progress, some aspects of spectral graph theory cannot be generalised to spectral hypergraph theory using tensors.
The high computational complexity of the tensors associated with hypergraphs is another challenge in studying many spectral aspects of hypergraphs.
Most tensor-related problems are NP-hard, as shown in \cite{MR3144915}.
The alternative method for studying a hypergraph is to examine the underlying graph with appropriate weights. 
Different properties of a hypergraph are studied in terms of the spectra of different connectivity matrices associated with the underlying weighted graph of the hypergraph, see \cite{bretto2013hypergraph,rodriguez2003Laplacian,rodriguez2009Laplacian,MR4208993}. 
Since many significant properties (including the connectivity among the vertices) of a hypergraph are encrypted in the spectra of these matrices, they are generally referred to as the connectivity matrices of the hypergraph.

In this article, we introduce some linear operators associated with a hypergraph which are generalization of some conventional notion of apparently different connectivity matrices associated with that hypergraph. In fact, here we attempt to unify some apparently different but similar concepts of connectivity matrices. Moreover, keeping in mind some applications of their special cases, we can predict some possible real-world applications of our introduced operators. 
 Now we summarise the content of this article in brief.
  \Cref{basics} is devoted to introducing the general diffusion operator. In \Cref{prilims} we define some preliminary notions that we are going to use throughout the article. The general diffusion operator, one pivotal notion of this article, is introduced in \Cref{gen-diff-exm}. Some stimulating examples are included in this section.
The spectra of the diffusion operator are studied in \Cref{eigdiffprop}. We provide eigenvalues of diffusion operator of hypergraph having some special property in \Cref{cute1}, \Cref{cute2}, \Cref{cute3}. We use one of the most natural approaches to analyze the eigenvalues of an operator. We exploit the eigenvectors of the operators in the above-mentioned theorems. 
We provide some results in \Cref{eigdiffprop} which facilitate us to find the eigenvalues and eigenvectors of some classes of diffusion operators of some types of hypergraphs simply from the structure of the hypergraphs.  We calculate the eigenvalues and corresponding eigenspaces of diffusion operators of a class of hypergraphs in \Cref{spectra_ex}. We provide the complete spectra and corresponding eigenspaces of hyperflower hypergraph in \Cref{spectra_ex}.We show that the Laplacian operator is a constant multiple of the diffusion operator. Therefore, one can easily estimate the spectra of the Laplacian operator of hypergraphs from the same of the diffusion operators given in this section. 
In \Cref{bounds}, we investigate the spectral bounds of several hypergraph properties in terms of the spectra of the diffusion operator. 
We also derive spectral bounds for weak connectivity number, degree of vertices, maximum cut, bipartition width, isoperimetric constant of hypergraphs.
 In \Cref{adjacency}, we introduce the general adjacency operator.
 \Cref{nor-lap} is devoted to the normalized Laplacian. Some potential applications of our study are presented in \Cref{app}.
 
 \section{General diffusion operators of a hypergraph}\label{basics}
 Let $\mathbb{R}^{V}$ be the set of all  real-valued functions on the vertex set $V$ and $\mathbb{R}^E$ denote the set of all  real-valued functions on the set of all the hyperedges $E$. Suppose that $\mathbf{1}\in \mathbb{R}^V$ is defined by $\mathbf{1}(v)=1 $ for all $v\in V$. Let $\mathfrak{M}$ be a collection of linear operators on $\mathbb{R}^V$ such that for all $M\in \mathfrak{M}$, $\lim\limits_{t\to\infty}s(t)= c\mathbf{1}$ for some $c\in\mathbb{R}$, and  where $s:\mathbb{R}\to \mathbb{R}^V$ is a solution of the differential equation $\dot{x}(t)=M(x(t))$.
Diffusion processes end up with equality of concentration after the movement of substances from higher concentration to lower concentration. Therefore,  
 $\lim\limits_{t\to\infty}s(t)= c\mathbf{1}$ can be interpreted as a diffusion under the action of the operator $ M$ and we refer any $M\in\mathfrak{M}$ as a diffusion operator. 
This section is devoted to finding a diffusion operator associated with a hypergraph. Now we recall some preliminaries related to hypergraphs.

  The corank, $cr(G)$ and rank, $rk(G)$ of a hypergraph, $G=(V,E)$, is defined by $cr(G)=\min\limits_{e\in E}|e|,$ and $ rk(G)=\max\limits_{e\in E}|e|.$ A hypergraph $G$ is called $m$-uniform hypergraph if $cr(G) = rk(G) = m$.
 Suppose that $v_0,v_l\in V$. A \textit{path $v_0-v_l$ of length $l$ connecting the vertices $v_0$ and $v_l$} in a hypergraph $G=(V,E)$ is an alternating sequence $v_0e_1v_1e_2v_2\ldots v_{l-1}e_lv_l$ of distinct vertices $v_0,v_1,\ldots,v_{l-1},v_l$ and distinct hyperedges $e_1,e_2,\ldots,e_l$, such that, $v_{i-1}, v_i \in e_i$ for all $i= 1, \dots, l$.
 The\textit{ distance, $d(u,v)$, between two vertices $u,v\in V$}  is the minimum among the length of all paths connecting the vertices $u$ and $v$. The \textit{diameter}, $diam(G)$ of a hypergraph $G(V,E)$ is defined by $diam(G)=\max\limits_{u,v\in V}d(u,v)$. 
 An \textit{weighted hypergraph} $G=(V,E,w)$ is a hypergraph with a function $w:E\to \mathbb{R}^{+}$, called the weight of the hyperedges.
 For an weighted hypergraph $G=(V,E,w)$, the \textit{degree of a vertex} $v\in V$ is defined by, $d(v)=\sum_{e\in E_v}w(e)$, where $E_v$ is the collection of all the hyperedges containing the vertex $v$. In \cite{bretto2013hypergraph}, $E_v$ is referred as the star centered in $v$. 
If the hypergraph is unweighted then $w(e)=1$ for all $e\in E$ and then $d(v)=|E_v|$.  

 \subsection{The average operator and general signless Laplacian operator}\label{prilims}
 
 We consider $V$ and $E$ are two finite sets.
Let $\delta_V:V\to\mathbb{R}^+$ and $\delta_E:E\to\mathbb{R}^+$ be two positive real-valued functions on  the vertices and hyperedges, respectively.
We define below inner products on $\mathbb{R}^{V}$ and $\mathbb{R}^E$. 
\begin{df}
\begin{enumerate}
    \item ({Inner product on $\mathbb{R}^{V}$} )
Given $x,y\in$ $\mathbb{R}^{V}$, let 
$$(x,y)_V:=\sum\limits_{v\in V}\delta_V(v)x(v)y(v).$$
\item (Inner product on $\mathbb{R}^{E}$) Given $\beta,\gamma\in$ $\mathbb{R}^{E}$, let 
$$(\beta,\gamma)_E:=\sum\limits_{e\in E}\delta_E(e)\beta(e)\gamma(e).$$
\end{enumerate}
\end{df}
Now we define a function from $\mathbb{R}^{V}$ to $\mathbb{R}^{E}$, which  will produce the average of any given real-valued function on $V$ on a given hyperedge $e$. 
\begin{df}[Average operator ] Given $x\in \mathbb{R}^{V}$, $e \in E$, the function $avg:\mathbb{R}^{V} \to \mathbb{R}^{E}$ is defined by 
$$(avg(x))(e):=\frac{\sum\limits_{v\in e}x(v)}{|e|}.$$ where $|e|$
is the cardinality of $e$.    
\end{df} 

Now we introduce the adjoint of $avg$. 	
\begin{df}[Adjoint of the average operator]
Given $\beta \in \mathbb{R}^{E} $, $ v\in V$ the function $avg^*:\mathbb{R}^{E}\to \mathbb{R}^{V}$ is defined by
$$(avg^*(\beta))(v):=\sum\limits_{e\in E_v}\frac{\beta(e)}{|e|}\frac{\delta_E(e)}{\delta_V(v)}.$$
\end{df}

Now we show that $avg^*:\mathbb{R}^{E}\to \mathbb{R}^{V}$ is the unique choice for being the adjoint of the average operator.
\begin{prop}
For any $x\in \mathbb{R}^V$ and any $\beta \in \mathbb{R}^E$,
$(avg(x),\beta)_E=(x,avg^*(\beta))_V$.
\end{prop}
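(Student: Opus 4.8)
The plan is to verify the adjoint relation by a direct computation: I would expand both inner products from their definitions and then reconcile them through a single interchange of the order of summation. First I would expand the left-hand side using the inner product on $\mathbb{R}^E$ together with the definition of $avg$, writing
\[
(avg(x),\beta)_E=\sum_{e\in E}\delta_E(e)\,\frac{\sum_{v\in e}x(v)}{|e|}\,\beta(e)=\sum_{e\in E}\sum_{v\in e}\frac{\delta_E(e)\beta(e)}{|e|}\,x(v),
\]
so that the right-hand side becomes a double sum over incidence pairs $(e,v)$ with $v\in e$.

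The crucial step is to reindex this double sum. The index set $\{(e,v):e\in E,\ v\in e\}$ coincides, since $v\in e$ holds if and only if $e\in E_v$, with the set $\{(v,e):v\in V,\ e\in E_v\}$; swapping the summation order therefore yields
\[
(avg(x),\beta)_E=\sum_{v\in V}x(v)\sum_{e\in E_v}\frac{\delta_E(e)\beta(e)}{|e|}.
\]
Next I would expand the right-hand side of the claim using the inner product on $\mathbb{R}^V$ and the definition of $avg^*$, noting that the weight $\delta_V(v)$ coming from $(\cdot,\cdot)_V$ cancels exactly against the factor $1/\delta_V(v)$ built into $avg^*$:
\[
(x,avg^*(\beta))_V=\sum_{v\in V}\delta_V(v)\,x(v)\sum_{e\in E_v}\frac{\beta(e)}{|e|}\frac{\delta_E(e)}{\delta_V(v)}=\sum_{v\in V}x(v)\sum_{e\in E_v}\frac{\delta_E(e)\beta(e)}{|e|}.
\]
Comparing the two boxed expressions shows they are identical, which gives the asserted equality.

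The only genuine subtlety, and the step I would treat most carefully, is the interchange of summation; it is justified because $V$ and $E$ are finite (so no convergence issue arises) and it rests on the equivalence $v\in e\Leftrightarrow e\in E_v$. Everything else is routine algebra. It is worth emphasising that the precise form of $avg^*$, with its weight $\delta_E(e)/\delta_V(v)$, is exactly what is needed to make the $\delta_V$ factors cancel and the two sides agree; this is also what forces $avg^*$ to be the \emph{unique} adjoint, since an adjoint of a linear map between finite-dimensional inner product spaces is uniquely determined by the relation being verified.
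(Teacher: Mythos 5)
Your proposal is correct and follows essentially the same route as the paper's proof: a direct expansion of both inner products and an interchange of the double sum over incidence pairs $(e,v)$ with $v\in e$, which the paper performs implicitly in a single chain of equalities. Your version merely makes the reindexing $v\in e \Leftrightarrow e\in E_v$ and the cancellation of the $\delta_V(v)$ factors explicit.
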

\begin{proof}
\begin{align*}
    (avg(x),\beta)_E
    &=\sum\limits_{e\in E}\delta_E(e)(avg(x))(e)\beta(e)\\
    &=\sum\limits_{e\in E}\delta_E(e)\beta(e)\frac{\sum\limits_{v\in e}x(v)}{|e|}\\
    &=\sum\limits_{v\in V}\delta_V(v)x(v)\sum\limits_{e\in E_v}\frac{\beta(e)}{|e|}\frac{\delta_E(e)}{\delta_V(v)}\\
    &=(x,avg^*(\beta))_V.
\end{align*}
\end{proof}
Clearly, For all $x\in\mathbb{R}^{V}$ and $v\in V$ the expression of the function,  $avg^*\circ avg:\mathbb{R}^{V}\to \mathbb{R}^{V}$ is $$ (avg^*\circ avg)(x)(v)=\sum\limits_{e\in E_v}\frac{(avg(x))(e)}{|e|}\frac{\delta_E(e)}{\delta_V(v)}.$$ From now onward, we denote the operator  $avg^*\circ avg$ by $\mathcal Q$. Therefore, the operator $\mathcal Q:\mathbb{R}^{V}\to \mathbb{R}^{V}$ is defined by
\begin{equation}\label{Q}
	\mathcal Q(x)(v)= (avg^*\circ avg)(x)(v)=\sum\limits_{e\in E_v}\frac{(avg(x))(e)}{|e|}\frac{\delta_E(e)}{\delta_V(v)}, 
\end{equation}
for any $x\in\mathbb{R}^{V}$ and $v\in V$. 
\begin{rem}\label{remq} Now we have the following observations on $\mathcal Q$.
\begin{itemize}
    \item [(1)] Evidently, $(x,\mathcal{Q}x)_V=(x,(avg^*\circ avg)x)_V=((avg(x),avg(x))_E\ge 0$. Therefore, $\mathcal{Q}$ is a positive semidefinite operator. Moreover, $\mathcal{Q} $ is self-adjoint since $(x,\mathcal{Q}y)_V=((avg(x),avg(y))_E=((avg(y),avg(x))_E=(y,\mathcal{Q}x)_V=(\mathcal{Q}x,y)_V $. 
    \
    \item [(2)] From \cref{Q} we have $ \mathcal{Q}(\mathbf{1})(v)=\sum\limits_{e\in E_v}\frac{(avg(\mathbf{1}))(e)}{|e|}\frac{\delta_E(e)}{\delta_V(v)}=\sum\limits_{e\in E_v}\frac{1}{|e|}\frac{\delta_E(e)}{\delta_V(v)}$. If $\sum\limits_{e\in E_v}\frac{1}{|e|}\frac{\delta_E(e)}{\delta_V(v)}=c$ for all $v\in V$ then $c$ is an eigenvalue with eigenvector $\mathbf{1}$. Moreover, if $\delta_E(e)=|e|$ and $\delta_V(v)=|E_v|$ for all $e\in E$ and $v\in V$, then $ \mathcal Q(\mathbf{1})(v)=\sum\limits_{e\in E_v}(avg(\mathbf{1}))(e)\frac{1}{|E_v|}=1$. Therefore, $1$ is an eigenvalue with eigenvector $\mathbf{1}$.
    \item [(3)] Consider  $\delta_E(e)=|e|^2$ and $\delta_V(v)=1$. Then
    \begin{align*}
        \mathcal Q(x)(v)&=\sum\limits_{e\in E_v}\frac{(avg(x))(e)}{|e|}\frac{\delta_E(e)}{\delta_V(v)} \notag\\&
        =\sum\limits_{e\in E_v}\sum\limits_{u\in e}x(u)\\&
        =\sum\limits_{e\in E}B_{ve}\sum\limits_{u\in V}B_{ue}x(u)=((BB^T)x)(v),
    \end{align*}
    
    where $B=\left(B_{ue}\right)_{u\in V,e\in E}$ and $B_{ue}=1$ if $u\in e$ and otherwise $B_{ue}=0$.
    Therefore, $\mathcal{Q}$ becomes the operator associated with the signless Laplacian matrix $BB^T$, described in \cite[p. 1]{cardoso2019signless}. This motivates us to  refer the operator $\mathcal{Q}$ as the general signless Laplacian of hypergraphs.
\end{itemize}
\end{rem}
\subsection{The general diffusion operator} \label{gen-diff-exm}
We define a function $n\in \mathbb{R}^{V}$ by $$n=\mathcal{Q}(\mathbf{1}).$$

Now we define the general diffusion function $L_G:\mathbb{R}^{V}\to\mathbb{R}^{V}$ by 
$$(L_G(x))(v)=\mathcal{Q}(x)(v)-n(v)x(v),$$ 
for all $x\in \mathbb{R}^{V}$, $v\in V$. 
Now onward, we denote $L_G$  by $L$, when there is no scope of confusion regarding the hypergraph $G$.
\begin{nt}\label{gen} Different concepts of diffusion operators and Laplacian operators are available in the literature \cite{bretto2013hypergraph,rodriguez2003Laplacian,rodriguez2009Laplacian,MR4208993,banerjee2020synchronization}. In the following example, we show for the proper choices of $\delta_V$ and $\delta_E$, our notion of diffusion operator of hypergraph coincides with some existing notions of Laplacian operators and diffusion operators of hypergraphs.
\end{nt}
\begin{exm}\label{ex-diff-hy}
\begin{enumerate}\label{ex-diff}
    \item \label{L1}
If we take $\delta_V(v)=1$ and $\delta_E(e)=|e|^2$, then the operator $L$ becomes the negative of the Laplacian matrix, described in \cite{rodriguez2003Laplacian,rodriguez2009Laplacian}.
\item \label{L2}
If we choose $\delta_V(v)=1$ for all $v\in V$ and $\delta_E(e)=w(e)\frac{|e|^2}{|e|-1} $ then the operator $L$ becomes equal with the diffusion operator described in \cite{banerjee2020synchronization} for weighted hypergraphs. Moreover, for unweighted hypergraphs, i.e., when $w(e)=1$ for all $e\in E$, $L$ becomes the negative of the Laplacian operator defined in \cite{MR4208993}.

\item \label{L3}
When $\delta_V(v)=|E_v|$, and $\delta_E(e)=\frac{|e|^2}{|e|-1}$, $L$ becomes negative of the normalized Laplacian given in \cite{MR4208993}.
\end{enumerate}
\end{exm}

The above examples motivate us to defined the general Laplacian operator $\mathfrak{L}$ associated with hypergraphs as, $\mathfrak{L}=-L$. Since, studying any one of the operators, $L$ and $\mathfrak{L}$, do the same for the other, from now, we focus on $L$. 
\begin{rem}
Any result of this article involving any conditions on $\delta_E$ and $\delta_V$ on the diffusion operator, Laplacian operator, adjacency operator can be converted to a result on the operators given in \cite{MR4208993, banerjee2020synchronization,rodriguez2003Laplacian,rodriguez2009Laplacian} by choosing $\delta_E$, $\delta_V$ accordingly. Similarly, if one choose other $\delta_E$ and $\delta_V$ to incorporate different situations then all the results of this article can be converted to their framework by appropriately choosing $\delta_E$ and $\delta_V$.
\end{rem}
\section{Eigenvalues of the general diffusion operators of hypergraphs}\label{eigdiffprop}
Since the map $f:\mathbb{R}^{V}\to \mathbb{R}^{V}$, defined by $(f(x))(v)=n(v)x(v)$, for all $x\in \mathbb{R}^{V},v\in \mathbb{R}^{V}$, is  self-adjoint, and the operator, $\mathcal{Q}(x)$ is  self-adjoint, thus $L$ is also self-adjoint.
From the definition of $L$, it follows that 
\begin{align}\label{L}
    (Lx)(v)=\sum\limits_{e\in E_v}\frac{\delta_E(e)}{\delta_V(v)}\frac{1}{|e|^2}\sum\limits_{u\in e}(x(u)-x(v)),
\end{align}
for all $x\in \mathbb{R}^V, v\in V$. For each hyperedge $e\in E$, suppose $Q_e$ is the incident matrix of a complete graph $K_{|e|}$ involving all the vertices in $e$ with a fixed orientation. So, $Q_e=\left\{{Q_e}_{rv}\right\}_{r\in \mathbb{N}_{\binom{|e|}{2}}, v\in V}$, where ${Q_e}_{rv}=-1$ if $v$ is the head of the $r$-th edge of the oriented $K_{|e|}$,  ${Q_e}_{rv}=1$ if $v$ is the tail of the $r$-th edge of the oriented $K_{|e|}$, and ${Q_e}_{rv}=0$ otherwise. Here $\mathbb{N}_r$ is the collection of all the natural numbers $\le r$. It is easy to verify that for each $x\in \mathbb{R}^V$, $Lx=-(\sum\limits_{e\in E}\frac{\delta_E(e)}{|e|^2}\Delta_V^{-1}Q_e^tQ_e)x$, where $\Delta_V$ is a diagonal matrix of order $|V|$ such that $\Delta_V(v,v)=\delta_V(v)$ for all $v\in V$. 
\begin{prop}\label{nsdo}
$L$ is negative semidefinite.
\end{prop}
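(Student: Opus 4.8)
The plan is to exploit the factorised form of $L$ recorded just above the statement, namely
\[
Lx=-\Bigl(\sum_{e\in E}\frac{\delta_E(e)}{|e|^2}\Delta_V^{-1}Q_e^tQ_e\Bigr)x,
\]
together with the fact that the inner product $(\cdot,\cdot)_V$ is precisely the one whose Gram matrix is $\Delta_V$, i.e. $(x,y)_V=x^t\Delta_V y$ when $x,y$ are regarded as column vectors indexed by $V$. Since $L$ was shown to be self-adjoint with respect to $(\cdot,\cdot)_V$, negative semidefiniteness is equivalent to $(x,Lx)_V\le 0$ for every $x\in\mathbb{R}^V$, so it suffices to evaluate this single quadratic form.

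First I would substitute the factorisation into $(x,Lx)_V=x^t\Delta_V(Lx)$ and observe the key cancellation $\Delta_V\Delta_V^{-1}=I$:
\[
(x,Lx)_V=-\sum_{e\in E}\frac{\delta_E(e)}{|e|^2}\,x^t\Delta_V\Delta_V^{-1}Q_e^tQ_e\,x=-\sum_{e\in E}\frac{\delta_E(e)}{|e|^2}\,\lVert Q_ex\rVert^2.
\]
Each summand is the product of the strictly positive weight $\delta_E(e)/|e|^2$ (recall $\delta_E$ is $\mathbb{R}^+$-valued) with the Euclidean norm square $\lVert Q_ex\rVert^2\ge 0$, so the whole expression is nonpositive and the claim follows at once.

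Alternatively, and without invoking the incidence matrices $Q_e$, one can argue directly from \eqref{L}. Writing $(x,Lx)_V=\sum_{v\in V}\delta_V(v)x(v)(Lx)(v)$, the factor $\delta_V(v)$ cancels against the $1/\delta_V(v)$ appearing in \eqref{L}, and after interchanging the order of summation via $\sum_{v\in V}\sum_{e\in E_v}=\sum_{e\in E}\sum_{v\in e}$ one reaches $\sum_{e\in E}\frac{\delta_E(e)}{|e|^2}\sum_{u,v\in e}x(v)\bigl(x(u)-x(v)\bigr)$. Symmetrising the inner double sum in $u$ and $v$ rewrites it as $-\tfrac12\sum_{u,v\in e}\bigl(x(u)-x(v)\bigr)^2$, which is again manifestly $\le 0$; this is simply the coordinate form of $-\lVert Q_ex\rVert^2$.

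There is essentially no hard step here: the content is the bookkeeping observation that the vertex weight $\Delta_V$ built into $(\cdot,\cdot)_V$ is exactly what cancels the $\Delta_V^{-1}$ in the factorisation, leaving a genuine sum of squares. The only point requiring a moment's care is to ensure the quadratic form is taken with respect to $(\cdot,\cdot)_V$ rather than the unweighted Euclidean product; this is where the already-established self-adjointness of $L$ under $(\cdot,\cdot)_V$ is used to legitimise reading off definiteness from the single form $(x,Lx)_V$.
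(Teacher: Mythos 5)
Your proposal is correct. Your primary route is genuinely different in form from the paper's: you take the factorisation $L=-\bigl(\sum_{e\in E}\frac{\delta_E(e)}{|e|^2}\Delta_V^{-1}Q_e^tQ_e\bigr)$ stated just before the proposition, note that the Gram matrix $\Delta_V$ of $(\cdot,\cdot)_V$ cancels the $\Delta_V^{-1}$, and read off $(x,Lx)_V=-\sum_{e\in E}\frac{\delta_E(e)}{|e|^2}\lVert Q_ex\rVert^2\le 0$ as a weighted sum of squares. The paper instead works directly from the definition of $L$ via $\mathcal{Q}$ and $n$: it expands $(L(x),x)_V$, isolates the contribution of each hyperedge $e=\{v_1,\dots,v_k\}$, and symmetrises it into $-\frac{1}{2}\frac{\delta_E(e)}{|e|^2}\sum_{i,j=1}^k(x(v_i)-x(v_j))^2$, arriving at the explicit identity \eqref{nsdeold}. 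The two arguments are the same computation in different clothing --- verifying the quoted factorisation amounts to expanding $Q_e^tQ_e$ as the Laplacian of $K_{|e|}$, which is precisely the paper's symmetrisation step, and your second, alternative route is the paper's proof essentially verbatim. What the matrix route buys is brevity and structural transparency (a positively weighted sum of Gram operators is manifestly negative semidefinite after the sign); what the paper's route buys is the explicit quadratic-form identity \eqref{nsde}, which is not a by-product but the actual tool reused later (in \Cref{evec1}, in the multiplicity-of-zero proposition, and in the spectral bounds of \Cref{bounds}), so deriving it inside the proof is deliberate. One small caveat: the factorisation you lean on is only asserted in the paper ("easy to verify"), so a fully self-contained version of your argument should include that one-line verification, namely $(Q_e^tQ_ex)(v)=-\sum_{u\in e}(x(u)-x(v))$ for $v\in e$ and $0$ otherwise.
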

\begin{proof}
For any $x\in \mathbb{R}^{V}$,
\begin{align}
 \label{nsdl}   \notag (L(x),x)_V 
               &=\sum\limits_{v\in V}\delta_V(v)[\mathcal{Q}(x)(v)-n(v)x(v))]x(v)\\\notag
               &=\sum\limits_{v\in V}\delta_V(v)[(avg^*\circ avg)(x)(v)-n(v)x(v))]x(v)\\
               &=\sum\limits_{v\in V}\sum\limits_{e\in E_v}\frac{\delta_E(e)}{|e|}[(avg(x))(e)-x(v))]x(v).
\end{align}
The contribution of the hyperedge $e=\{v_1,v_2,\ldots,v_k\}$ in the sum of the \Cref{nsdl} \begin{align}
    \notag &=\sum\limits_{v\in e}\frac{\delta_E(e)}{|e|}[(avg(x))(e)-x(v))]x(v)\\
    \notag &=\sum\limits_{i= 1}^k\frac{\delta_E(e)}{|e|^2}\left[\left\{\sum\limits_{j=1 }^k x(v_j)\right\}-|e|x(v_i))\right]x(v_i)\\
    \notag &=-\frac{1}{2}\frac{\delta_E(e)}{|e|^2}\sum_{i,j=1}^k(x(v_i)-x(v_j))^2\le 0.
\end{align}
Thus the \Cref{nsdl} becomes,
\begin{align}\label{nsdeold}
 (L(x),x)_V=-\sum_{e\in E}   \frac{1}{2}\frac{\delta_E(e)}{|e|^2}\sum_{u,v\in e}(x(u)-x(v))^2\le 0.
\end{align}
Hence the proof follows.
\end{proof}
Note that in \Cref{nsdeold}, the term $(x(u)-x(v))^2$ appear twice in the sum $\sum\limits_{u,v\in e}(x(u)-x(v))^2$, first as $(x(u)-x(v))^2$ and then as $(x(v)-x(u))^2$. Therefore, the \Cref{nsdeold} can also be expressed as 
\begin{align}\label{nsde}
 (L(x),x)_V
 = -\sum_{e\in E}   \frac{\delta_E(e)}{|e|^2}\sum_{\{u,v\}\subset e}(x(u)-x(v))^2.
\end{align}
\begin{prop}\label{evec1}
$0$ is an eigenvalue of $L$ and if the hypergraph is connected then the eigenspace of $0$ is $\langle\mathbf{1}\rangle$, the vector space generated by $\mathbf{1}$.
\end{prop}
\begin{proof}
Since, $L(\mathbf{1})(v)=0$ for all $v\in V$, $0$ is an eigenvalue of $L$ with an eigenvector $\mathbf{1}$.

If $x$ belongs to the eigenspace of $L$ corresponding to the eigenvalue $0$ and the hypergraph is connected, then by \Cref{nsde} we have $x(u)=x(v)$ for all $u,v\in V$. Thus the proof follows.
\end{proof}
By \Cref{nsdo} and \Cref{evec1}, other than $0$ all the eigenvalues of $L$ are negative and for a connected hypergraph, the eigenspace of the eigenvalue $0$ is $\langle\mathbf{1}\rangle$.  Hence, if $x(t)$ is a solution of the differential equation
$$\dot{x}(t)=L(x(t))$$ then as $t\to \infty$, among all the components of decomposed vector $x(t)$ along the eigenvectors of $L$ only the component along $\mathbf{1}$ survives and rest of all tend to $0$. Thus, as $t\to \infty$, any solution of the given differential equation converge to the vector space $\langle\mathbf{1}\rangle$. Therefore, $L$ is a reasonable candidate for being the diffusion operator corresponding to a hypergraph on the space of all real-valued functions on the set of all the vertices, $V$. 

Suppose $|V|=N$. By \Cref{nsdo} and \Cref{evec1}, there exists a collections of non-negative reals $\{\lambda_i(G)\}_{i=1}^N$ (or simply $\{\lambda_i\}_{i=1}^N$ if there is no scope of confusion regarding the hypergraph) such that $-\lambda_i$ is an eigenvalue of $L$ for all $i\in \mathbb{N}_{N}$. Suppose the indices $i(\in \mathbb{N}_{N})$ are chosen in such a way that $\lambda_i\le\lambda_{i+1}$. By \Cref{evec1}, $\lambda_1=0$. The \textit{Rayleigh quotient} $R(-L,x)$ of $-L$ and  nonzero $x(\in \mathbb{R}^V$) is $\frac{(-Lx,x)_V}{(x,x)_V}$. Since $L$ is self-adjoint, we can assume that  $\{\mathbf{1},z_2,\ldots, z_N\}$ is the orthonormal basis of $\mathbb{R}^V$ consisting of the eigenfunctions of $L$ and $z_i(\in \mathbb{R}^V)$ is the eigenfunction of $L$ corresponding to the eigenvalue $\lambda_i$. The Rayleigh quotient reaches its minimum value $\lambda_1=0$  when $x=\mathbf{1}$, the eigenvector of $L$ corresponding to the eigenvalue $\lambda_1=0$. Moreover, $\lambda_2=\inf\limits_{\mathclap{x\in \langle\mathbf{1}\rangle^\perp-\{\mathbf{0}\}}}R(-L,x)$ and the Rayleigh quotient reaches the infimum value at $x=z_2$, the eigenvector of $L$, corresponding to the eigenvalue $\lambda_2$. The multiplicity of $0$ as an eigenvalue of the graph Laplacian is equal to the number of connected components of the graph. This is an well known result for the graphs. One can conclude the same for hypergraph Laplacian. The proof for hypergraph is almost same that works for graphs.

\begin{prop}
Multiplicity of the zero eigenvalue of the diffusion matrix $L$ of a hypergraph $G$ is equal to the number of connected components in $G$.
\end{prop}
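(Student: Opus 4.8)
The plan is to show that the kernel of $L$ is exactly the span of the indicator functions of the connected components of $G$, so that its dimension, i.e.\ the multiplicity of the eigenvalue $0$, equals the number of components.

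The first step is a reduction: I claim that for $x\in\mathbb{R}^V$ one has $Lx=0$ if and only if $(L(x),x)_V=0$. One direction is trivial. For the other, I would use that $L$ is self-adjoint (recalled at the beginning of this section) and negative semidefinite (\Cref{nsdo}). Expanding $x=\sum_i c_i z_i$ in the orthonormal eigenbasis $\{z_i\}$ with $Lz_i=-\lambda_i z_i$ and $\lambda_i\ge 0$, we get $(L(x),x)_V=-\sum_i\lambda_i c_i^2$. Since every summand is nonnegative, the vanishing of the sum forces $c_i=0$ whenever $\lambda_i>0$, so $x$ lies in the $0$-eigenspace, i.e.\ $Lx=0$. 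Hence $\ker L=\{x\in\mathbb{R}^V:(L(x),x)_V=0\}$. This is the step I would treat most carefully, since self-adjointness is exactly what makes the implication work.

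Next, combining this with the expression \eqref{nsde} and using that each coefficient $\delta_E(e)/|e|^2$ is strictly positive, $(L(x),x)_V=0$ holds precisely when $x(u)=x(v)$ for every hyperedge $e$ and every pair $\{u,v\}\subset e$; that is, $x$ is constant on the vertices of each hyperedge. I would then propagate this along paths: if $u$ and $v$ lie in the same connected component, a path $v_0e_1v_1\cdots e_lv_l$ from $u$ to $v$ yields a chain of consecutive vertices each sharing a hyperedge, forcing $x(u)=x(v)$. Thus every $x\in\ker L$ is constant on each connected component, and conversely any function constant on each component satisfies the pairwise condition and hence lies in $\ker L$.

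Finally, writing $V=V_1\sqcup\cdots\sqcup V_k$ for the connected components and $\mathbf{1}_{V_j}$ for the indicator function of $V_j$, the previous paragraph identifies $\ker L$ with the span of $\{\mathbf{1}_{V_1},\dots,\mathbf{1}_{V_k}\}$. These functions have pairwise disjoint supports, so they are linearly independent, and there are $k$ of them; therefore $\dim\ker L=k$. Since the multiplicity of $0$ as an eigenvalue of $L$ equals $\dim\ker L$, it equals the number of connected components of $G$, completing the argument. The only genuinely delicate point is the first reduction; the remaining steps are the same combinatorial propagation already used in the connected case in \Cref{evec1}.
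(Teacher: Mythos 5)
Your proof is correct and follows essentially the same route as the paper's: both use the quadratic-form identity \Cref{nsde} to show that the $0$-eigenspace consists exactly of functions constant on each connected component, and then count its dimension (the paper via an explicit isomorphism $\mathfrak{g}:S\to\mathbb{R}^k$, you via the basis of component indicator functions, which is a cosmetic difference). Your explicit reduction $\ker L=\{x:(L(x),x)_V=0\}$ via the spectral decomposition is a careful touch that the paper leaves implicit, but it does not change the substance of the argument.
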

\begin{proof}
  Suppose that the multiplicity of the zero eigenvalue is $k$. 
Let $S$ be the eigenspace of  the eigenvalue $0$ of $L$. Let $(V_1,E_1)$ $,(V_2,E_2)$ $,\ldots, (V_k,E_k)$ be the $k$ components of the hypergraph $G$. Evidently, by \cref{nsde}, for all $z\in S$,
\begin{equation}\label{equal}
    0=(Lz,z)_V=-\sum_{e\in E}   \frac{1}{2}\frac{\delta_E(e)}{|e|^2}\sum_{u,v\in e}(z(u)-z(v))^2.
\end{equation}
It is evident from \Cref{equal} that for all $z\in S$, $z$ is constant within each connected component of the hypergraph. Therefore, for all $z\in S$, there exists $z_1,z_2,\ldots,z_k\in \mathbb{R} $ such that $z(u)=z_i$ for all $u\in V_i$ where $i\in\{1,2,\ldots,k\}$. This association of each elements of $S$ to $k$ real numbers motivates us to define the linear map $\mathfrak{g}:S\to \mathbb{R}^k$ by $\mathfrak{g}(z):=(z_1,z_2,\ldots,z_k)$. Using \Cref{equal}, one can easily verify that $\mathfrak{g}$ is an isomorphism. therefore, the geometric multiplicity of $0$, which is the dimension of the eigenspace $S$ is exactly equal to $k$. Since $L$ is a self-adjoint operator, for any eigenvalue of $L$ the algebraic multiplicity is equal to the geometric multiplicity. Therefore, the number of components, $k$ is equal to the multiplicity of $0$ as an eigenvalue of $L$.
\end{proof}
Now we provide some results on the eigenvalues and eigenvectors of the diffusion operator of some classes of hypergraphs. The following results allow us to determine several eigenvalues of the same simply by looking at the hypergraphs. According to the definition of the diffusion operator, a hypergraph corresponds to a class of diffusion operator. More precisely, a hypergraph along with a particular choice of $(\delta_V,\delta_E)$ induces a diffusion operator.Therefore, naturally the eigenvectors and eigenvalues of the diffusion operator depends both on the structures of the hypergraphs and the choices of the inner products. In the following results, we determined the eigenvalues and the eigenvectors of the diffusion operator with two types of specifications- $i)$ the conditions on the structure of the hypergraphs specify the class of hypergraphs, and $ii)$ the conditions on $\delta_E,\delta_V$  specify the subclass of the diffusion operators. 
\begin{thm}\label{cute-new}
If $G=(V,E)$ is a hypergraph such that
\begin{itemize}
    \item [(i)] \label{cute-new-structure}$E_k=\{e_1,e_2,\ldots,e_k\}\subset E$ with $W=\bigcap\limits_{e\in E_k}e$ and $|W|\ge 2$, and $e\cap W=\emptyset$ for all $e\in E\setminus E_k$,
    \item[(ii)] \label{delta-cute-new}$\delta_V(v)=c$ for all $v\in W$, 
    for some fixed $c
    \in\mathbb{R}$,
\end{itemize}
then $-\frac{1}{c}\sum\limits_{e\in E_k}\frac{\delta_E(e)}{|e|}$ is an eigenvalue of the diffusion operator $L$ and the dimension of the corresponding eigenspace is at least $|W|-1$.
\end{thm}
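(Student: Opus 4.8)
The plan is to exhibit an explicit $(|W|-1)$-dimensional subspace of $\mathbb{R}^V$ consisting of eigenvectors of $L$ for the claimed eigenvalue, reading off every computation from the closed form \eqref{L}.

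First I would record the structural consequence of hypothesis (i): for every $v\in W$ one has $E_v=E_k$. Indeed, since $W=\bigcap_{e\in E_k}e$ we have $W\subseteq e$, hence $v\in e$, for each $e\in E_k$; and if $v$ belonged to some $e'\in E\setminus E_k$ then $e'\cap W\supseteq\{v\}\neq\emptyset$, contradicting (i). Thus each vertex of $W$ lies in exactly the hyperedges $e_1,\dots,e_k$ and in no others. The same fact $W\subseteq e$ for $e\in E_k$ gives $e\cap W=W$ when $e\in E_k$, while $e\cap W=\emptyset$ when $e\in E\setminus E_k$.

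Next I would introduce the candidate eigenspace
$$U:=\Bigl\{x\in\mathbb{R}^V : x(u)=0 \text{ for all } u\notin W,\ \text{and}\ \sum_{u\in W}x(u)=0\Bigr\}.$$
The functions supported on $W$ form a space of dimension $|W|$, and the single linear condition $\sum_{u\in W}x(u)=0$ cuts this down to $\dim U=|W|-1$; since $|W|\ge 2$ the space $U$ is nonzero. Writing $\mu:=-\frac1c\sum_{e\in E_k}\frac{\delta_E(e)}{|e|}$, it then remains only to verify $Lx=\mu x$ for every $x\in U$, whence the eigenspace of $\mu$ contains $U$ and has dimension at least $|W|-1$.

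Finally, fix $x\in U$, set $s:=\sum_{u\in W}x(u)=0$, and evaluate $(Lx)(v)$ from \eqref{L} in two cases. For $v\in W$, use $\delta_V(v)=c$, $E_v=E_k$, and $\sum_{u\in e}x(u)=\sum_{u\in W}x(u)=s=0$ for each $e\in E_k$, so that the inner sum collapses to $\sum_{u\in e}(x(u)-x(v))=s-|e|x(v)=-|e|x(v)$, yielding $(Lx)(v)=-\frac1c\sum_{e\in E_k}\frac{\delta_E(e)}{|e|}x(v)=\mu x(v)$. For $v\notin W$ one has $x(v)=0$, so $\mu x(v)=0$; and for each $e\in E_v$ the quantity $\sum_{u\in e}(x(u)-x(v))=\sum_{u\in e\cap W}x(u)$ equals $s=0$ when $e\in E_k$ and equals $0$ when $e\in E\setminus E_k$ (since then $e\cap W=\emptyset$), giving $(Lx)(v)=0=\mu x(v)$. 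I do not anticipate a genuine obstacle here: once the correct subspace—functions supported on $W$ with vanishing sum—is chosen, the argument is a direct substitution into \eqref{L}. The only point needing care is the bookkeeping for $v\notin W$, where the disjointness condition $e\cap W=\emptyset$ for $e\in E\setminus E_k$ is exactly what forces each hyperedge's contribution to vanish.
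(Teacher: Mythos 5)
Your proof is correct and takes essentially the same approach as the paper: your subspace $U$ of functions supported on $W$ with zero sum is precisely the span of the paper's explicit eigenvectors $y_i$ (which take value $-1$ at a fixed $v_0\in W$, $+1$ at $v_i$, and $0$ elsewhere), and your case analysis for $v\in W$ versus $v\notin W$ mirrors the paper's verification. The only difference is presentational—you describe the eigenspace intrinsically and count its dimension, while the paper exhibits a basis and checks linear independence.
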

\begin{proof}
Suppose that $W=\{v_0,v_1,\ldots,v_s\}$. Corresponding to each $v_i$, for all $i=1,2,\ldots,s$, we define $y_i\in \mathbb{R}^V$ as, 
$$
y_i(v)=
\begin{cases}
-1&\text{~if~} v=v_0\\
 \phantom{-}1&\text{~if~} v=v_i\\
 \phantom{-}0&\text{~otherwise.~}
\end{cases} 
$$ 
 We enlist below some crucial observations on $y_i$, for all $i=1,2,\ldots,s$.
 \begin{itemize}
     \item[(a)]If $v\in V\setminus W$ then $y_i(v)=0$. 
     \item[(b)]  If $e\notin E_k $ then $e\cap W=\emptyset$ and therefore, $y_i(v)=0$ for all $v\in e$. If $e\in E_k$ then from the definition of $y_i$ we have $\sum\limits_{v\in e}y_i(v)=0$. Therefore, $\sum\limits_{v\in e}y_i(v)=0$ for all $e\in E$.
     \item[(c)] Therefore, $  (Ly_i)(v)=\sum\limits_{e\in E_v}\frac{\delta_E(e)}{\delta_V(v)}\frac{1}{|e|^2}\sum\limits_{u\in e}(y_i(u)-y_i(v))=-\sum\limits_{e\in E_v}\frac{\delta_E(e)}{\delta_V(v)}\frac{1}{|e|}y_i(v)$.
     \item[(d)] Evidently, $E_v=E_k$ for all $ v\in W$, 
 \end{itemize}
 Since $\delta_V(v)=c,
 $ 
 for all $v\in W$, then 
 by the above observations we conclude that for all $i=1,2,\ldots,s$, 
\begin{align*}
  (Ly_i)(v) &=
  \begin{cases}
  -\sum\limits_{e\in E_k}\frac{\delta_E(e)}{c}\frac{1}{|e|}y_i(v) & \text{~if~} v\in W,\\
  0 & \text{~otherwise.~}
  \end{cases}
 \end{align*} 
 Thus $(Ly_i)(v) = -\frac{1}{c}\sum\limits_{e\in E_k}\frac{\delta_E(e)}{|e|}y_i(v).$

Therefore, $-\frac{1}{c}\sum\limits_{e\in E_k}\frac{\delta_E(e)}{|e|} $ is an eigenvalue of $L$ with the eigenvectors $y_1,y_2,\ldots, y_s$, respectively. Since  $$(\sum\limits_{i=1}^sc_iy_i)(v)=
\begin{cases}
-\sum\limits_{i-1}^s c_i&\text{~if~} v=v_0,\\
 \phantom{-}c_i&\text{~if~} v=v_i,\\
 \phantom{-}0&\text{~otherwise,~}
\end{cases}$$ for $c_1,c_2,\ldots,c_s\in \mathbb{R}$,  $ (\sum\limits_{i=1}^sc_iy_i)=0$ implies $c_i=0$ for all $i=1,2,\ldots, s$.  Therefore, $\{y_1,y_2,\ldots, y_s\}$ is linearly independent and the dimension of the eigenspace of the above mentioned eigenvalue is at least $s$.
\end{proof}
We provide below some examples related to the above result.
\begin{nt}
\begin{enumerate}
    \item Let us recall \Cref{ex-diff}(\ref{L1}). If we put $\delta_V(v)=1$ and $\delta_E(e)=|e|^2$, then the diffusion operator $L$ becomes the negative of the Laplacian matrix, described in \cite{rodriguez2003Laplacian,rodriguez2009Laplacian}. In this case, $\delta_V$ is constant function and $\sum\limits_{e\in E_k}\frac{\delta_E(e)}{|e|}=\sum\limits_{e\in E_k}|e|$ is always a constant and thus, the condition (ii) of \Cref{cute-new} holds trivially and not required to be mentioned in this case. That is, in this case, $-\sum\limits_{e\in E_k}|e|$ is an eigenvalue of the diffusion operator and  $\sum\limits_{e\in E_k}|e|$ is an eigenvalue of the Laplacian operator with the multiplicity $|W|-1$.
    \item In \Cref{ex-diff}(\ref{L2}) we have seen, for $\delta_V(v)=1$ and $\delta_E(e)=\frac{|e|^2}{|e|-1}$ the diffusion operator becomes the negative of the Laplacian operator mentioned in \cite{MR4208993}. In this case  $\sum\limits_{e\in E_k}\frac{\delta_E(e)}{|e|}=\sum\limits_{e\in E_k}\frac{|e|}{|e|-1}$ is always a constant and therefore, also for this particular diffusion operator the condition $(ii)$ always holds. Evidently, here the Laplacian eigenvalue is $\sum\limits_{e\in E_k}\frac{|e|}{|e|-1}$ with the multiplicity $|W|-1$.
    \item  Recall \Cref{ex-diff-hy}(\ref{L3}). If $\delta_V(v)=|E_v|$, and $\delta_E(e)=\frac{|e|^2}{|e|-1}$ then the diffusion operator becomes the negative of the normalized Laplacian described in \cite[Equatioin-14]{MR4208993}. Note that in \Cref{cute-new}, $E_v=E_k$ for all $v\in W$ and thus, $\delta_V(v)=|E_k|=k$ for all $v\in W$. Therefore, by \Cref{cute-new}, $\frac{1}{k}\sum\limits_{e\in E_k}\frac{|e|}{|e|-1}$ is an eigenvalue of the normalized Laplacian with the multiplicity $|W|-1$.
\end{enumerate}
\end{nt}
\begin{exm}
    Consider the hypergraph $H(V,E)$ where $V=[20]=\{n\in \mathbb N:n\le 20\}$ and $E=\{e_1=\{1, 2,3,4\},e_2=\{1,2, 5,6,7\},e_3=\{1,2,8,9,10\},e_4=\{1,2,11,12,13,14\},e_5=\{1,2,15,16,17,18,19,20\}\}$. Since $|W|=\bigcap\limits_{i=1}^5e_i=\{1,2\}$, we have the followings.
    \begin{enumerate}
        \item In the framework of \cite{MR4208993}, one eigenvalue of the Laplacian of $H$ is $ \sum\limits_{i=1}^5\frac{|e_i|}{|e_i|-1}=\frac{1297}{210}$ and an eigenvalue of the normalized Laplacian matrix of $H$ is $\frac{1}{5}\sum\limits_{e\in E_k}\frac{|e|}{|e|-1}=\frac{1297}{1050}$.
        \item In the framework of \cite{rodriguez2003Laplacian,rodriguez2009Laplacian,bretto2013hypergraph}, one eigenvalue of the Laplacian of $H$ is $ \sum\limits_{i=1}^5|e_i|=28$.
    \end{enumerate}
    \end{exm}
   
\begin{cor}\label{cute1}
If $G=(V,E)$ is a hypergraph satisfying the following conditions
\begin{itemize}
    \item [(i)] the intersection of all the hyperedges contains at least two vertices, that is, $|\bigcap\limits_{e\in E}e|\ge2$,
    \item [(ii)]the function $\delta_V$ is constant on $\bigcap\limits_{e\in E}e$, that is, there exists $c\in \mathbb{R}^+$ such that $\delta_V(v)=c$ for all $v\in \bigcap\limits_{e\in E}e$,
\end{itemize}
 then $-\sum\limits_{e\in E}\frac{\delta_E(e)}{c}\frac{1}{|e|}$ is an eigenvalue of the diffusion operator $L$ and the dimension of the corresponding eigenspace is at least $|\bigcap\limits_{e\in E}e|-1$.
\end{cor}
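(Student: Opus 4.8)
The plan is to obtain this as an immediate special case of \Cref{cute-new} by specializing the subfamily $E_k$ to be the entire hyperedge set $E$. First I would set $E_k=E$ in the hypotheses of \Cref{cute-new}. With this choice, $W=\bigcap_{e\in E_k}e=\bigcap_{e\in E}e$, so the requirement $|W|\ge 2$ in \Cref{cute-new}(i) becomes exactly condition (i) of the corollary, namely $|\bigcap_{e\in E}e|\ge 2$. The remaining part of \Cref{cute-new}(i)—that $e\cap W=\emptyset$ for all $e\in E\setminus E_k$—holds vacuously, because $E\setminus E_k=E\setminus E=\emptyset$ leaves no hyperedge to test. Likewise, condition (ii) of the corollary asserts that $\delta_V$ is constant, equal to some $c\in\mathbb{R}^+$, on $\bigcap_{e\in E}e=W$, which is precisely \Cref{cute-new}(ii).

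Having checked that all hypotheses of \Cref{cute-new} are met with $E_k=E$, I would simply read off its conclusion. It yields that
\[
-\frac{1}{c}\sum_{e\in E_k}\frac{\delta_E(e)}{|e|}=-\frac{1}{c}\sum_{e\in E}\frac{\delta_E(e)}{|e|}=-\sum_{e\in E}\frac{\delta_E(e)}{c}\frac{1}{|e|}
\]
is an eigenvalue of the diffusion operator $L$, and that the corresponding eigenspace has dimension at least $|W|-1=|\bigcap_{e\in E}e|-1$. This is verbatim the eigenvalue and the eigenspace-dimension bound claimed in the corollary, so the proof concludes.

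Since the corollary is a pure specialization, there is no genuine obstacle; the only point warranting a second glance is whether the vacuous satisfaction of the disjointness clause is truly harmless—that is, whether the proof of \Cref{cute-new} anywhere relies on the existence of hyperedges outside $E_k$. Inspecting that argument, the test vectors $y_i$ are supported entirely on $W$, and the computation invokes only the facts that $E_v=E_k$ for every $v\in W$ (observation (d)) and that $\sum_{v\in e}y_i(v)=0$ for every $e\in E$ (observation (b)). When $E_k=E$ both statements hold immediately, so the entire argument carries over without modification, confirming that the specialization is legitimate.
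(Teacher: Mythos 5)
Your proposal is correct and is exactly the paper's approach: the paper's proof consists of the single sentence that the corollary follows directly from \Cref{cute-new}, which is the specialization $E_k=E$ you carry out. Your extra check that the argument of \Cref{cute-new} never needs $E\setminus E_k$ to be nonempty is a sound (and welcome) verification of a step the paper leaves implicit.
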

\begin{proof}This result directly follows from the \Cref{cute-new}
\end{proof}
\begin{thm}\label{cute2}
Let $G=(V,E)$ be a hypergraph. Suppose that there exists an hyperedge, $e_0\in E$, such that
\begin{enumerate}
    \item[(i)] $e_0=e_u\cup e_v$ where 
    $e_u\cap e_v=\emptyset$,
    \item[(ii)] $|e_u|\ge 2$,
    \item[(iii)]{\label{con-3}} $e\cap e_u=\emptyset$ for all $e(\neq e_0)\in E$.
\end{enumerate}
 If $\delta_V(v)=c$ for all $v\in e_u$ then $-\frac{\delta_E(e_0)}{c}\frac{1}{|e_0|}$ is an eigenvalue of the diffusion operator $L$ of the hypergraph $G$ with multiplicity at least $|e_u|-1$.
\end{thm}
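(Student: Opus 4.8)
The plan is to prove this exactly as in \Cref{cute-new}, by exhibiting $|e_u|-1$ linearly independent eigenfunctions supported inside $e_u$. Writing $e_u=\{v_0,v_1,\ldots,v_s\}$ so that $s=|e_u|-1$, I would define, for each $i=1,\ldots,s$, the function $y_i\in\mathbb{R}^V$ taking the value $-1$ at $v_0$, the value $1$ at $v_i$, and $0$ elsewhere. The claim to verify is that each $y_i$ is an eigenfunction of $L$ with eigenvalue $-\frac{\delta_E(e_0)}{c}\frac{1}{|e_0|}$.

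The decisive structural input is condition (iii): since every hyperedge other than $e_0$ is disjoint from $e_u$, the star of any vertex $v\in e_u$ is exactly $\{e_0\}$, i.e.\ $E_v=\{e_0\}$. A second elementary but crucial observation is that $\sum_{u\in e_0}y_i(u)=y_i(v_0)+y_i(v_i)=0$, because the whole support $\{v_0,v_i\}$ of $y_i$ lies in $e_u\subseteq e_0$. Using formula \cref{L}, I would then compute $(Ly_i)(v)$ in two cases. For $v\notin e_u$ one has $y_i(v)=0$, and for each $e\in E_v$ the inner sum $\sum_{u\in e}y_i(u)$ vanishes --- when $e=e_0$ because $\sum_{u\in e_0}y_i(u)=0$, and when $e\neq e_0$ because condition (iii) makes $e$ disjoint from the support of $y_i$; hence $(Ly_i)(v)=0$. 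For $v\in e_u$, the sum over the star collapses to the single term $e_0$, and since $\sum_{u\in e_0}(y_i(u)-y_i(v))=-|e_0|y_i(v)$, one gets $(Ly_i)(v)=-\frac{\delta_E(e_0)}{\delta_V(v)}\frac{1}{|e_0|}y_i(v)$, which equals $-\frac{\delta_E(e_0)}{c}\frac{1}{|e_0|}y_i(v)$ once the hypothesis $\delta_V(v)=c$ on $e_u$ is inserted. Combining both cases yields $Ly_i=-\frac{\delta_E(e_0)}{c}\frac{1}{|e_0|}y_i$ identically on $V$.

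Finally, the $y_i$ are linearly independent by the same argument as in \Cref{cute-new}: evaluating $\sum_{i=1}^s c_i y_i$ at $v_i$ forces $c_i=0$. This shows the eigenspace has dimension at least $s=|e_u|-1$, as required.

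The proof is a direct adaptation of \Cref{cute-new}, so I expect no deep obstacle; the one point demanding genuine care is the case $v\notin e_u$. Here one must notice that the vertices of $e_v$ --- which \emph{do} lie in $e_0$ and hence are adjacent to the support of $y_i$ through the hyperedge $e_0$ --- nevertheless receive a vanishing contribution, precisely because the support $\{v_0,v_i\}$ of $y_i$ is balanced across $e_0$. The function of condition (iii) is to guarantee that $y_i$ is invisible to every hyperedge except $e_0$, which is exactly what localizes the eigenfunction to $e_u$ and makes $-\frac{\delta_E(e_0)}{c}\frac{1}{|e_0|}$ an honest eigenvalue rather than merely an approximate one.
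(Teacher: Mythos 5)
Your proposal is correct and follows essentially the same route as the paper's proof: the identical eigenfunctions $y_i$ supported on $\{v_0,v_i\}\subseteq e_u$, the same two key observations (that $E_v=\{e_0\}$ for $v\in e_u$ by condition (iii), and that $\sum_{u\in e}y_i(u)=0$ for every $e\in E$), and the same linear-independence argument. The only cosmetic difference is that the paper first derives the unified identity $(Ly_i)(v)=-\sum_{e\in E_v}\frac{\delta_E(e)}{\delta_V(v)}\frac{1}{|e|}y_i(v)$ from the vanishing edge-sums and then specializes, whereas you split directly into the cases $v\in e_u$ and $v\notin e_u$; the content is identical.
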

\begin{proof}
Suppose that $ e_u=\{u_0,u_1,\ldots, u_k\}$. For each $u_i $, $i=1,2,\ldots, k$, we define $y_i\in \mathbb{R}^V$ by 
$$
y_i(v)=
\begin{cases}
\phantom{-}1 \text{~if ~} v=u_i,\\
-1\text{~if~} v=u_0, \\
\phantom{-}0 \text{~otherwise.~}
\end{cases}
$$
Now we have the following observations on $y_i$ for $i=1,2,\ldots,k$.
\begin{itemize}
    \item [(a)]
    Evidently, $\sum\limits_{u\in e}y_i(u)=0$ for all $e\in E$ and $i=1,2, \ldots,k$ because if $e\ne e_0$ then $y_i(v)=0$ for all $v\in e$ and $\sum\limits_{u\in e}y_i(u)=1-1=0$. Thus, for all $v\in V$, one has by \Cref{L}
    \begin{align*}
         (Ly_i)(v)
         &=-\sum\limits_{e\in E_v}\frac{\delta_E(e)}{\delta_V(v)}\frac{1}{|e|}y_i(v).
         \end{align*}
    \item[(b)] For all $v\in e_u$, $E_v=\{e_0\}$. 
    Thus, for all $v\in e_u$, we have 
    \begin{align*}
         (Ly_i)(v)&
         =-\frac{\delta_E(e_0)}{\delta_V(v)}\frac{1}{|e_0|}y_i(v).
         \end{align*}
         
\end{itemize} Since $\delta_V(v)=c$ for all $v\in e_u$, by the above observations one has \begin{align}
     (Ly_i)(v)
      &=\begin{cases}
     -\frac{\delta_E(e_0)}{c}\frac{1}{|e_0|}y_i(v) & \text{~if~}v \in e_u,\\
     0&\text{~otherwise~}.
     \end{cases}\notag
\end{align}     
Thus $(Ly_i)(v) = -\frac{\delta_E(e_0)}{c}\frac{1}{|e_0|}y_i(v).$
Therefore, $-\frac{\delta_E(e_0)}{c}\frac{1}{|e_0|}$ is an eigenvalue of $L$ with the eigenvectors $y_1,y_2,\ldots,y_k$, respectively. Note that,  $$\left(\sum\limits_{i=1}^kc_iy_i\right)(v)=
\begin{cases}
-\sum\limits_{i=1}^kc_i & \text{~if~} v=u_0,\\
c_i & \text{~if~}v=v_i,\\
0&\text{~otherwise~.}
\end{cases}$$ Therefore, $\sum\limits_{i=1}^kc_iy_i=0$ leads  to $c_i=0$ for all $i=1,2,\ldots,k$ and $\{y_1,y_2,\ldots, y_k\}$ is a linearly independent subset of the eigenspace of $-\frac{\delta_E(e_0)}{c}\frac{1}{|e_0|} $. This proves that the multiplicity of the eigenvalue $-\frac{\delta_E(e_0)}{c}\frac{1}{|e_0|}$ is at least $k$.
\end{proof}
\begin{nt}\label{petal_lap_nt}
\begin{enumerate}
    \item\label{norm_lap_petal} Using conditions of \Cref{cute2}, one has $E_v=\{e_0\}$ for all $v\in e_u$. Recall \Cref{ex-diff-hy}(\ref{L3}). If $\delta_V(v)=|E_v|$, and $\delta_E(e)=\frac{|e|^2}{|e|-1}$ then the diffusion operator becomes the negative of the normalized Laplacian described in \cite[Equatioin-14]{MR4208993}. Since $\delta_V(v)=|E_v|=1$ for all $v\in e_u$, by \Cref{cute2}, $\frac{|e_0|}{|e_0|-1}$ is an eigenvalue of the normalized Laplacian with the multiplicity $|e_u|-1$. Moreover, according to \cite{MR4208993}, the normalized Laplacian matrix described in \cite[Equation-16]{MR4208993} is similar to that of \cite[Equation-14]{MR4208993}. Therefore, both the matrices have an eigenvalue $\frac{|e_0|}{|e_0|-1}$ with the multiplicity $|e_u|-1$. 
    \item In \Cref{ex-diff}(\ref{L1}), for $\delta_V(v)=1$ and $\delta_E(e)=|e|^2$, the diffusion operator $L$ becomes the negative of the Laplacian matrix, described in \cite{rodriguez2003Laplacian,rodriguez2009Laplacian,bretto2013hypergraph}. In this case, $\delta_V$ is constant function and $\frac{\delta_E(e_0)}{c|e_0|}=|e_0|$ and thus, in this case the eigenvalue of the Laplacian matrix becomes $|e_0|$.
    \item In \Cref{ex-diff}(\ref{L2}), we have seen, for $\delta_V(v)=1$ and $\delta_E(e)=\frac{|e|^2}{|e|-1}$ the diffusion operator becomes the negative of the Laplacian operator mentioned in \cite{MR4208993}. Since, here, $\frac{\delta_E(e_0)}{c|e_0|}=\frac{|e_0|}{|e_0|-1}$, thus, in this case the eigenvalue of the diffusion operator becomes $-\frac{|e_0|}{|e_0|-1}$ and the eigenvalue of the Laplacian matrix is $ \frac{|e_0|}{|e_0|-1}$.
 \end{enumerate}
\end{nt}
\begin{exm}\sloppy
Consider a hypergraph $H(V,E)$ with $V=[11]=\{n\in \mathbb{N}:n\le11\}$ and $E=\{e_1=\{1,2,3,4,5\},e_2=\{4,5,6,7,10,11\},e_3=\{6,7,8,9\},e_4=\{8,9,10,11\}\}$. Since $W=\{1,2,3,\}\subset e_1$ and $W\cap e=\emptyset$ for all $e(\neq e_1)\in E$, we have the followings.
   \begin{enumerate}
   
       \item In the framework of \cite{MR4208993}, an eigenvalue of the Laplacian of $H$ is $ \frac{|e_1|}{|e_1|-1}=\frac{5}{4}$. Moreover, by \Cref{petal_lap_nt}(\ref{norm_lap_petal}), $ \frac{|e_1|}{|e_1|-1}=\frac{5}{4}$ is also an eigenvalue of the normalized Laplacians described in \cite{MR4208993}.
        \item In the framework of \cite{rodriguez2003Laplacian,rodriguez2009Laplacian,bretto2013hypergraph}, one eigenvalue of the Laplacian of $H$ is $ |e_1|=5$.
   \end{enumerate}
\end{exm}
\begin{thm}\label{cute3}
Suppose that $G=(V,E)$ be a hypergraph such that $E_0=\{e_0,e_1,\ldots,e_k\}\subset E$  with
\begin{itemize}
    \item [(i)] $W=
    \bigcap\limits_{i=1}^ke_i\neq \emptyset$, and $W\cap e=\emptyset$ for all $e\in E\setminus E_0$,
    \item[(ii)]for all $i=0,1,\ldots,k$ there exists an $F_i\subset V$ such that $e_i=W\cup F_i$ with $|F_i|=t$ and $F_i\cap W=\emptyset$ for all $i$,
    \item[(iii)] $F_i\cap e=\emptyset$ for all $e(\ne e_i)\in E$.
    \item[(iv)]there exists $c,\omega\in \mathbb{R}$ such that $\delta_V(v)=c$ for all $v\in \bigcup\limits_{i=0}^kF_i$, and $\frac{\delta_E(e)}{|e|^2}=\omega$ for all $e\in E_0$.
\end{itemize}
 Then $-\frac{\omega}{c}|W|$ is an eigenvalue of $L$ with multiplicity at least $|E_0|-1$.
\end{thm}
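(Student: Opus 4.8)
The plan is to mirror the explicit-eigenvector constructions of \Cref{cute-new} and \Cref{cute2}, this time building vectors supported on the petals $F_i$. I would begin by recording two structural facts that drive every subsequent computation. By condition (iii), each vertex $v\in F_i$ lies in no hyperedge other than $e_i$, so $E_v=\{e_i\}$; by conditions (i) and (ii), each vertex $v\in W$ lies in $e_0,\dots,e_k$ and in no hyperedge outside $E_0$, so $E_v=E_0$. Moreover, since $e_i=W\cup F_i$ with $|F_i|=t$ and $F_i\cap W=\emptyset$, all hyperedges in $E_0$ share the cardinality $|e_i|=|W|+t$, which is exactly what makes the hypothesis $\delta_E(e)/|e|^2=\omega$ consistent across $E_0$.

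Next I would propose a family of candidate eigenvectors that are constant on each petal and vanish on the core. For scalars $\alpha_0,\dots,\alpha_k$, set $x(v)=\alpha_i$ for $v\in F_i$ and $x(v)=0$ elsewhere; in particular $x\equiv 0$ on $W$. The purpose of killing $x$ on the core is that, for $v\in F_i$, the inner difference in \Cref{L} simplifies to $\sum_{u\in e_i}(x(u)-x(v))=t\alpha_i-(|W|+t)\alpha_i=-|W|\alpha_i$, whence $(Lx)(v)=-\frac{\omega}{c}|W|\alpha_i$. This already exhibits $\lambda=-\frac{\omega}{c}|W|$ as the eigenvalue on the petals, and explains structurally why the core size $|W|$ enters the formula.

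I would then verify the eigenvalue equation at the remaining vertices. For $v$ outside $W\cup\bigcup_i F_i$, conditions (i) and (iii) guarantee that every hyperedge through $v$ avoids both $W$ and all $F_i$, so $x$ vanishes on such edges and $(Lx)(v)=0$ holds trivially. The delicate case is $v\in W$: here $x(v)=0$ and $E_v=E_0$, and a short substitution into \Cref{L} gives $(Lx)(v)=\frac{\omega t}{\delta_V(v)}\sum_{i=0}^k\alpha_i$, which matches $\lambda x(v)=0$ only when $\sum_i\alpha_i=0$. I expect this balancing condition on the shared core to be the main subtlety: unlike in \Cref{cute-new} and \Cref{cute2}, the natural test vectors do \emph{not} make the edge averages vanish, so one cannot treat the petals independently---the common core couples them and forces the single linear constraint $\sum_i\alpha_i=0$.

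Finally, to read off the multiplicity I would note that the admissible parameter space $\{(\alpha_0,\dots,\alpha_k):\sum_i\alpha_i=0\}$ has dimension $k=|E_0|-1$. A clean basis is given by $y_j$ $(1\le j\le k)$ taking value $1$ on $F_j$, $-1$ on $F_0$, and $0$ elsewhere; evaluating any vanishing linear combination on the pairwise disjoint petals $F_j$ forces every coefficient to be zero, so the $y_j$ are linearly independent eigenvectors for $\lambda$ and the eigenspace has dimension at least $|E_0|-1$.
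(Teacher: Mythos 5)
Your proof is correct and takes essentially the same approach as the paper: the paper's proof writes down exactly your basis vectors $y_j$ (value $1$ on $F_j$, $-1$ on $F_0$, zero on $W$ and everywhere else) and verifies the eigenvalue equation by the same three cases — petal vertices, core vertices (where the cancellation $t-t=0$ is your balancing constraint $\sum_i\alpha_i=0$ specialized to these vectors), and outside vertices — concluding with the same disjoint-support linear-independence argument. Your parametrization of all petal-constant, core-vanishing vectors and derivation of the constraint is a slightly more systematic packaging of the identical computation.
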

\begin{proof}
We define $y_i\in \mathbb{R}^V$ for all $i=1,2,\ldots,k$, as
$$
y_i(v)=
\begin{cases}
-1&\text{~if~} v\in F_0\\
\phantom{-}1&\text{~if~}v\in F_i\\
\phantom{-}0&\text{~otherwise.~}
\end{cases}
$$
 Now, we consider the following cases to prove the result.
\begin{itemize}
    \item [(a)] For $v\in F_j$, one has  $E_v=\{e_j\}$ for any $j=0,1,\ldots,k$. Therefore, \Cref{L} becomes $(Ly_i)(v)=\frac{\delta_E(e_j)}{c|e_j|^2}\sum\limits_{u\in e }(y_i(u)-y_i(v))=\frac{\delta_E(e_j)}{c|e_j|^2}\sum\limits_{u\in W }(y_i(u)-y_i(v))=-\frac{\omega}{c}|W|y_i(v)$.
    \item[(b)] For $v\in W $, clearly $E_v=E_0$ and $y_i(v)=0$. Thus, $(Ly_i)(v)=\sum\limits_{e\in E_0}\frac{\delta_E(e)}{\delta_V(v)}\frac{1}{|e|^2}\sum\limits_{u\in e}(y_i(u)-y_i(v))\\=\sum\limits_{j=0}^k\frac{\delta_E(e_j)}{\delta_V(v)}\frac{1}{|e_j|^2}\sum\limits_{u\in e_j}y_i(u)=\frac{\delta_E(e_i)}{\delta_V(v)}\frac{1}{|e_i|^2}t-\frac{\delta_E(e_0)}{\delta_V(v)}\frac{1}{|e_0|^2}t=\frac{\omega}{c}(t-t)=0$.
    \item[(c)] For $v\in V\setminus\left(W\cup\left(\bigcup\limits_{i=0}^kF_i\right)\right)$, one has $\sum\limits_{u\in e}(y_i(u)-y_i(v))=0$ for all $e\in E_v$. Therefore, $(Ly_i)(v)=0$. 
    \end{itemize}
    Therefore, $-\frac{\omega}{c}|W|$ is an eigenvalue of $L$.
    
    Since $$(\sum\limits_{i=1}^kc_iy_i)(v)=
\begin{cases}
-\sum\limits_{i=1}^kc_i &\text{~if~} v\in F_0,\\
c_i &\text{~if~} v\in F_i,\\
0   &\text{~otherwise,}
\end{cases}$$ we have $\sum\limits_{i=1}^kc_iy_i=0$ if and only if $c_i=0$ for all $i=1,2,\ldots,k$. So, $y_1,y_2,\ldots,y_k$ are linearly independent and the dimension of the eigenspace of the eigenvalue $-\frac{\alpha}{c}\frac{s-1}{s^2}$ of L is at least $k$. Therefore, the multiplicity of the eigenvalue $-\frac{\omega}{c}|W|$ is at least $k=|W|-1$.
\end{proof}
\begin{nt}
\begin{enumerate}
     \item In \Cref{ex-diff}(\ref{L1}), for $\delta_V(v)=1$ and $\delta_E(e)=|e|^2$, the diffusion operator $L$ becomes the negative of the Laplacian matrix, described in \cite{rodriguez2003Laplacian,rodriguez2009Laplacian}. In this case, $\delta_V$ is a constant function and $\frac{\delta_E(e_0)}{|e|^2}=1$. Thus  the condition $(iv)$ of \Cref{cute3} holds trivially. Therefore,  $|W| $ becomes an eigenvalue with the multiplicity $|W|-1$.
    \item In \Cref{ex-diff}(\ref{L2}) we have seen, for $\delta_V(v)=1$ and $\delta_E(e)=\frac{|e|^2}{|e|-1}$ the diffusion operator becomes the negative of the Laplacian operator mentioned in \cite{MR4208993}. Here  one can also verify easily that  the condition $(iv)$ of \Cref{cute3} holds if all the hyperedges in $E_0$ are of same cardinality. Therefore, in this case $\frac{1}{|e|-1}|W| $ is an eigenvalue with the multiplicity $|W|-1$.
    \item Note that for all $i=0,1,\ldots,k$, if $v\in F_i$  then $E_v=\{e_i\}$. Therefore, $|E_v|=1$ for all $v\in \bigcup\limits_{i=0}^kF_i$.  Let us recall \Cref{ex-diff-hy}(\ref{L3}). Now if $\delta_V(v)=|E_v|$, and $\delta_E(e)=\frac{|e|^2}{|e|-1}$ then the diffusion operator becomes the negative of the normalized Laplacian described in \cite[Equatioin-14]{MR4208993}. In this framework, $\delta_V(v)=|E_v|=1$ for all $v\in \bigcup\limits_{i=0}^kF_i$ and thus by \Cref{cute3}, we get $\frac{1}{|e|-1}|W| $ is an eigenvalue of the normalized Laplacian matrix with the multiplicity $|W|-1$.
\end{enumerate}
\end{nt}
We provide an application of the above result in \Cref{spectra_ex}.
\subsection{Spectra of Diffusion Operator of Some Specific Hypergraphs }\label{spectra_ex}
Now we recall some definitions of special type of hypergraphs from \cite{MR3116407,andreotti2020spectra} and derive the eigenvalues of their diffusion operators.
\begin{df}[Cored vertex]\cite[Definition 2.3]{MR3116407}
Suppose that $G(V,E)$ is a hypergraph. If for all $e\in E$, there exists $v_e\in e$ such that $v_e\notin e_j$ for all $e_j(\neq e)\in E$ then $G$ is called a cored hypergraph. A vertex with degree one is referred to as a cored vertex, and a vertex with degree greater than one is referred to as an \textit{intersectional} vertex. 
\end{df}
According to \cite{andreotti2020spectra}, if a hyperedge has only one cored vertex then the core vertex is called a \textit{pendant vertex}. Moreover, two vertices $u,v$ of a hypergraph are \textit{twins} if they belong to the exactly same hyperedge(s). 
Note that in \Cref{cute2}, all the elements of $e_u$ are cored vertex and any pair of vertices in $e_u$ are twins.  Moreover, \Cref{cute2} can be applied if there exists at least two cored vertex which are twins. In \Cref{cute3}, each $u_i$ is the only cored vertex in $e_i $, that is, each $u_i$ is a pendant and other than $u_i$, all the vertices in $e_i$ are intersectional. In \Cref{cute1}, the condition (1) can be restated as there exists at least a pair of twin vertices belongs to all the hyperedges. Now we are going to apply our results to determine the eigenvalues of some classes of hypergraphs that has cored vertices, twin vertices, and intersectional vertices.
\subsubsection{Complete Spectra of the Diffusion Operator of Hyperflowers}
\begin{df}[Hyperflowers]\cite{andreotti2020spectra} \label{hyperflower} A $(l,r)$-hyperflower  with $t$-twins is a hypergraph $G=(V,E)$ where $V$ can be expressed as the disjoint partition $V=U\cup W$ with the following listed property.
\begin{itemize}
\item[(a)] The set $U$ can be partitioned into disjoint $t$-element sets as $U=\bigcup\limits_{i=1}^l U_i$. That is $|U_i|=t$, $U_i=\{u_{is}\}_{s=1}^t$ for all $i=1,2\ldots,l$ and $U_i\cap U_j=\emptyset$ for all $i\neq j$ and $i,j=1,\ldots,l$.
\item[(b)] There exists $r$-disjoint set of vertices $e_1,\ldots,e_r\in \mathcal{P}(W)$, the power set of $W$, such that, $W=\bigcup\limits_{j=1}^re_j$ and
$E=\{e_{ki}:e_{ki}=e_k\cup U_i,k=1,2,\ldots,r;i=1,2,\ldots,l\}$.
\end{itemize}
If $v\in U$, then $v$ is called a peripheral vertex.
\end{df}
Suppose that $E_k=\{e_{ki}\}_{i=1}^l$ for any $k=1,2,\ldots ,r$. Evidently, $e_k=\bigcap\limits_{e\in E_k}e$ and $e\cap e_k=\emptyset$ for all $e\in E\setminus E_k$. Therefore, by $\Cref{cute-new}$, if $\delta_V(v)=c_k$ and $\sum\limits_{e\in E_v}\frac{\delta_E(e)}{|e|}=\mu_k$ for all $v\in e_k$ then $-\frac{\mu_k}{c_k}$ is an eigenvalue of the diffusion operator $L_G$ with eigenspace of dimension at least $|e_k|-1$ for all $k$.
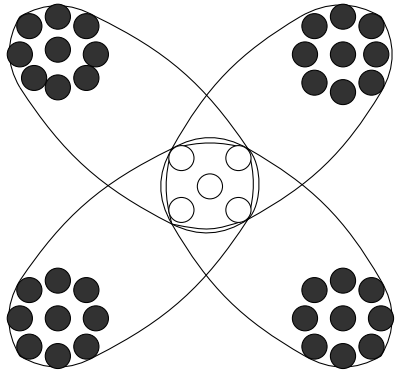
\begin{figure}[ht]
    \centering
\begin{tikzpicture}[scale=0.25]
	\begin{pgfonlayer}{nodelayer}
	\node [style=blackfilled] (0) at (-8, 7.25) {};
		\node [style=blackfilled] (1) at (-6.5, 8.5) {};
		\node [style=blackfilled] (2) at (-9.5, 8.5) {};
		\node [style=blackfilled] (3) at (-6.5, 5.75) {};
		\node [style=blackfilled] (4) at (-9.25, 5.75) {};
		\node [style=blackfilled] (5) at (-10, 7) {};
		\node [style=blackfilled] (6) at (-6, 7) {};
		\node [style=blackfilled] (7) at (-8, 9) {};
		\node [style=blackfilled] (8) at (-8, 5.25) {};
		\node [style=blackfilled] (9) at (7, 7) {};
		\node [style=blackfilled] (10) at (8.5, 8.5) {};
		\node [style=blackfilled] (11) at (5.5, 8.5) {};
		\node [style=blackfilled] (12) at (8.5, 5.5) {};
		\node [style=blackfilled] (13) at (5.5, 5.5) {};
		\node [style=blackfilled] (14) at (5, 7) {};
		\node [style=blackfilled] (15) at (8.75, 7) {};
		\node [style=blackfilled] (16) at (7, 9) {};
		\node [style=blackfilled] (17) at (7, 5) {};
		\node [style=blackfilled] (18) at (7, -7) {};
		\node [style=blackfilled] (19) at (8.5, -5.5) {};
		\node [style=blackfilled] (20) at (5.5, -5.5) {};
		\node [style=blackfilled] (21) at (8.5, -8.5) {};
		\node [style=blackfilled] (22) at (5.5, -8.5) {};
		\node [style=blackfilled] (23) at (5, -7) {};
		\node [style=blackfilled] (24) at (9, -7) {};
		\node [style=blackfilled] (25) at (7, -5) {};
		\node [style=blackfilled] (26) at (7, -9) {};
		\node [style=blackfilled] (27) at (-8, -7) {};
		\node [style=blackfilled] (28) at (-6.5, -5.5) {};
		\node [style=blackfilled] (29) at (-9.5, -5.5) {};
		\node [style=blackfilled] (30) at (-6.5, -8.5) {};
		\node [style=blackfilled] (31) at (-9.5, -8.5) {};
		\node [style=blackfilled] (32) at (-10, -7) {};
		\node [style=blackfilled] (33) at (-6, -7) {};
		\node [style=blackfilled] (34) at (-8, -5) {};
		\node [style=blackfilled] (35) at (-8, -9) {};
		\node [style=black] (36) at (0, 0) {};
		\node [style=black] (37) at (-1.5, 1.5) {};
		\node [style=black] (38) at (1.5, -1.25) {};
		\node [style=black] (39) at (1.5, 1.5) {};
		\node [style=black] (40) at (-1.5, -1.25) {};
		\node [style=none] (41) at (-10, 9) {};
		\node [style=none] (42) at (-6, 9) {};
		\node [style=none] (43) at (-10, 5.25) {};
		\node [style=none] (44) at (-2, -2) {};
		\node [style=none] (45) at (2, 2) {};
		\node [style=none] (46) at (2, -1.75) {};
		\node [style=none] (47) at (9, 9) {};
		\node [style=none] (48) at (-2, 2) {};
		\node [style=none] (49) at (4.75, 9) {};
		\node [style=none] (50) at (9, 5) {};
		\node [style=none] (51) at (-10, -5) {};
		\node [style=none] (52) at (-10, -9) {};
		\node [style=none] (53) at (-6, -9) {};
		\node [style=none] (54) at (5, -9) {};
		\node [style=none] (55) at (9, -9) {};
		\node [style=none] (56) at (9, -5) {};
		\node [style=none] (57) at (-0.25, 2.5) {};
	\end{pgfonlayer}
	\begin{pgfonlayer}{edgelayer}
		\draw [bend right=15] (48.center) to (51.center);
		\draw [bend left=15] (46.center) to (53.center);
		\draw [bend left=15] (48.center) to (45.center);
		\draw [bend left=15] (45.center) to (46.center);
		\draw [bend right] (51.center) to (52.center);
		\draw [bend right] (52.center) to (53.center);
		\draw [bend left] (48.center) to (45.center);
		\draw [bend left=15] (45.center) to (56.center);
		\draw [bend left] (56.center) to (55.center);
		\draw [bend left] (55.center) to (54.center);
		\draw [bend left=15] (54.center) to (44.center);
		\draw [bend left=15] (44.center) to (48.center);
		\draw [bend right] (41.center) to (43.center);
		\draw [bend right=15] (43.center) to (44.center);
		\draw [bend right=15, looseness=1.25] (44.center) to (46.center);
		\draw [bend right=45, looseness=0.75] (46.center) to (45.center);
		\draw [bend right=15] (45.center) to (42.center);
		\draw [bend right] (42.center) to (41.center);
		\draw [bend right] (47.center) to (49.center);
		\draw [bend right=15] (49.center) to (48.center);
		\draw [bend right] (48.center) to (44.center);
		\draw [bend right] (44.center) to (46.center);
		\draw [bend right=15] (46.center) to (50.center);
		\draw [bend right] (50.center) to (47.center);
\end{pgfonlayer}
\end{tikzpicture}

    \caption{$(4,1)$-hyperflower with $9$ twins.}
    \label{fig:hyperflower}
\end{figure}

In our study the hyperflowers with $r=1$ are interesting because, in this case, each peripheral vertex is a cored vertex of the hyperflower. We summarize below some crucial observations on $(l,1)$-hyperflower with $t$ twins  (see \Cref{fig:hyperflower}).  Since $r=1$, one has $W=e_1$ and $\bigcap\limits_{e\in E} e=W$.
\begin{itemize}
    \item[(1)] Note that $e_1=W= \bigcap\limits_{e\in E}e$. Therefore, if $|e_1|\ge 2$ and $\delta_V(v)=c_0 $ for all $v\in e_1$ then by \Cref{cute1}, one can conclude that $-\sum\limits_{e\in E}\frac{\delta_E(e)}{c_0}\frac{1}{|e|}$ is an eigenvalue of the diffusion operator $L$  associated with the $(l,1)$-hyperflower with $t$ twins with the multiplicity at least $|e_1|-1=|W|-1$.
    \item[(2)] Since, for any hyperedge, all the peripheral vertices belong to the hyperedge are cored vertices and any two of them are twins, then by \Cref{cute2}, if $t\ge 2$ and $\delta_V(v)=c_i$ for all $v\in U_i$ then each hyperedge $e_{1i}$ of the hyperflower corresponds to an eigenvalue $-\frac{\delta_E(e_{1i})}{c_i}\frac{1}{|e_{1i}|}$ of $L$ with multiplicity at least $t-1$.
    \item[(3)] Note that if $t\ge 2$ then the total number of vertices in $(l,1)$-hyperflower with $t$ twins is $|V|=l.t+|e_1|$. Evidently, there exists $ l.t+|e_1|$ eigenvectors of $L$, out of which, $(|e_1|-1)+l(t-1)=(l.t+|e_1|)-(l+1)$ can be calculated by using \Cref{cute1} and \Cref{cute2}. We, know among the remaining $l+1$ eigenvectors, $\mathbf{1}$ is an eigenvector of $L$.
    \item[(4)] Suppose that $\delta_V(v)=c$ for all the peripheral vertices $v\in U$ and $ \frac{\delta_E(e)}{|e|^2}=\omega$ for all the hyperedges $e\in E$ then  \Cref{cute3} suggests the existence of the eigenvalue $(-\frac{\omega}{c}|e_1|=)-\frac{\omega}{c}|W|$ with the multiplicity at least $l-1$. 
    \item[(5)] We can easily verify that the last remaining eigenvalue is $-\frac{\omega}{c}(lt+|W|)=-\frac{\omega}{c}|V|$ with an eigenvector $y\in \mathbb{R}^V$ defined by 
    $$
    y(v)=
    \begin{cases}
    \phantom{-}lt &\text{~if~} v\in W\\
    -|W| &\text{~if~} v\in U.
    \end{cases}
    $$.
    \item[(6)] 
    Note that all the eigenvalues of $(l,1)-$ hyperflower with $t$-twins are the multiples of  $\frac{\omega}{c}$. Therefore, if $\frac{\omega}{c}$ is an integer then all the eigenvalues of $L$ are integer.
 \end{itemize}
 In a $(l,1)$-hyperflower, when the central set $W$ is a singleton set $\{v_0\}$ then it becomes a sunflower.
 \begin{df}[Sunflower]\cite[Definition 2.4]{MR3116407} Let $G=(V,E)$ be a $k$-uniform hypergraph. If there exists a disjoint partition of the vertex set $V$ as $V=V_0\cup V_1\cup\ldots \cup V_s$, such that, 
\begin{enumerate}
    \item[(a)] $V_0=\{v_0\}$ and $|V_i|=k-1$ for all $i=1,2,\ldots, s$, 
    \item[(b)]$E=\{e_i=V_0\cup V_i:i=1,2,\ldots, s\}$,
\end{enumerate}
then $G$ is called a $k$-uniform sunflower. Each hyperedge of sunflower is called leaf. The vertex $v_0$ is  referred as the heart of the sunflower. Note that, the degree of the heart is the cardinality of the set $E$, which is also called the size of the sunflower. 
\end{df}
Since the sunflower is a special case of the $(l,1)$-hyperflower, from the list of the eigenvalues of $(l,1)$-hyperflower one can determine the eigenvalues of the diffusion operator associated with sunflower. 
\subsubsection{Spectra of Diffusion Operators of Some More Hypergraphs}
 \begin{df}[Loose Path]\label{path} A $k$-uniform hypergraph $G=(V,E)$ is said to be a $k$-uniform loose path of size $d$ if all the $d$ hyperedges form a sequence $\{e_i \}^d_{i=1}$, such that,
 $e_i\cap e_j=\emptyset$ if $|i-j|>1$ and $|e_i\cap e_j|=1$ if $|i-j|=1$.
 \end{df}Note that if $i\neq 1,d$ then each $e_i$ contains $k-2$ cored vertices. Therefore, if $\delta_V(v)=c_i$ for all cored vertices $v\in e_i$, then by \Cref{cute2}, $-\frac{\delta_E(e_i)}{c_i}\frac{1}{k}$ is an eigenvlue of $L$ with the multiplicity $k-3$. If $k=1,d$ then $e_i$ contains $k-1$ cored vertices, then the multiplicity is $k-2$. If for $k\ge 3$, a $k$-uniform hypergraph $G=(V,E)$ is such that it satisfies  the conditions stated in \Cref{path} with just one exception, which is $|e_1\cap e_d|=2$, then $G$ is called a $k$-uniform \textit{loose cycle} of size $d$. Using \Cref{cute2} we can find the eigenvalues of $L$ of a  loose cycle as we have done it for a loose path.

\section{Spectral bounds for some hypergraph property}\label{bounds}
\begin{df}\label{gendegree}
Let $r\in \mathbb{R}^V$ be defined by $r(v):=\sum\limits_{e\in E_v}\frac{\delta_E(e)}{\delta_V(v)}\frac{|e|-1}{|e|^2}$ and $r_0:=\max\limits_{v\in V}r(v)$. 
\end{df}
 Thus if $\delta_E(e)=w(e)\frac{|e|^2}{|e|-1},\delta_V(v)=1$ then $r(v)=d(v)$ and $r_0=d_{\max}$, where $d_{\max}:=\max\limits_{v\in V}d(v)$.
For unweighted graph, $w(e)=1$ for all $e\in E$ and hence the degree $d(v)$ of a vertex $v$ is the number of  hyperedges contain the vertex $v$. Let $E_v := \{e\in E: v\in e \}$. Thus $d(v)=|E_v|$.
 Clearly for an $m$-uniform hypergraph, $r(v)=\frac{m-1}{|m|^2}\sum\limits_{e\in E_V}\frac{\delta_E(e)}{\delta_V(v)}$.

 Now we compute a bound for $r(v)$ in terms of the spectra of the diffusion operator. Since for  particular choices of inner products, $r(v)$ becomes the degree of $V$, this bound gives a spectral  bound for the vertex degree. We use the techniques described in \cite[3.5., p. 300]{MR318007} to prove the next result.
 \begin{thm} Suppose that $\lambda_2$ and $\lambda_N$ are  the second least and largest eigenvalue, respectively, of $-L$ 
 then
 $$\delta_V(\min)\lambda_2\le \frac{|V|}{|V|-1}\min\limits_{v\in V}r(v)\delta_V(v)\le \frac{|V|}{|V|-1}\max\limits_{v\in V}r(v)\delta_V(v)\le \lambda_N\delta_V(\max).$$
 \end{thm}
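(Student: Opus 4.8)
The plan is to read off both outer inequalities from the variational descriptions of the extreme eigenvalues of $-L$ recorded earlier, namely $\lambda_2=\inf_{x\in\langle\mathbf 1\rangle^\perp-\{\mathbf 0\}}R(-L,x)$ and $\lambda_N=\sup_{x\neq\mathbf 0}R(-L,x)$, by feeding in indicator-type test functions; the central inequality is just $\min_v r(v)\delta_V(v)\le\max_v r(v)\delta_V(v)$ multiplied by the positive constant $\frac{|V|}{|V|-1}$. The single computation driving everything is that for the one-vertex indicator $\mathbf 1_{v_0}$ (value $1$ at $v_0$, $0$ elsewhere), formula \eqref{nsde} gives $(-L\mathbf 1_{v_0},\mathbf 1_{v_0})_V=\sum_{e\in E_{v_0}}\frac{\delta_E(e)(|e|-1)}{|e|^2}=\delta_V(v_0)r(v_0)$, directly from \Cref{gendegree}; moreover this quadratic form is unchanged if we add any multiple of $\mathbf 1$ to $\mathbf 1_{v_0}$, since \eqref{nsde} depends only on the differences $x(u)-x(v)$.

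For the upper bound on $\lambda_N$ I would use the test function $x_{v_0}=\mathbf 1_{v_0}-\frac1{|V|}\mathbf 1$, which is admissible because the supremum defining $\lambda_N$ is over all nonzero functions. Its numerator is $\delta_V(v_0)r(v_0)$ by the remark above, while a direct expansion gives $(x_{v_0},x_{v_0})_V=\frac1{|V|^2}\bigl[\delta_V(v_0)(|V|-1)^2+\sum_{v\neq v_0}\delta_V(v)\bigr]\le\frac{|V|-1}{|V|}\delta_V(\max)$. Hence $\lambda_N\ge R(-L,x_{v_0})\ge\frac{|V|}{|V|-1}\frac{\delta_V(v_0)r(v_0)}{\delta_V(\max)}$ for every $v_0$, and taking the maximum over $v_0$ yields $\frac{|V|}{|V|-1}\max_v r(v)\delta_V(v)\le\lambda_N\delta_V(\max)$.

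For the lower bound involving $\lambda_2$ the test function must be orthogonal to $\mathbf 1$ in $(\cdot,\cdot)_V$, so I would take $y_{v_0}=\mathbf 1_{v_0}-\frac{\delta_V(v_0)}{D}\mathbf 1$ with $D:=\sum_{v}\delta_V(v)$, which satisfies $(y_{v_0},\mathbf 1)_V=0$. The numerator is again $\delta_V(v_0)r(v_0)$, and a short computation gives $(y_{v_0},y_{v_0})_V=\delta_V(v_0)\frac{D-\delta_V(v_0)}{D}$, so that $R(-L,y_{v_0})=\frac{r(v_0)D}{D-\delta_V(v_0)}\ge\lambda_2$. To massage this into the stated shape I would establish the elementary inequality $\delta_V(\min)\,D\,(|V|-1)\le |V|\,\delta_V(v_0)\bigl(D-\delta_V(v_0)\bigr)$: the right-hand side is concave in the variable $\delta_V(v_0)$, which ranges in $[\delta_V(\min),\,D-(|V|-1)\delta_V(\min)]$, so its minimum over that interval is attained at an endpoint, and at both endpoints the inequality reduces to $D\ge|V|\delta_V(\min)$, which always holds. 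Combining, $\delta_V(\min)\lambda_2\le\delta_V(\min)\frac{r(v_0)D}{D-\delta_V(v_0)}\le\frac{|V|}{|V|-1}\delta_V(v_0)r(v_0)$ for every $v_0$, and minimizing over $v_0$ gives the left inequality.

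The main obstacle is precisely the weighted inner product $(\cdot,\cdot)_V$: orthogonality to $\mathbf 1$ is measured by $(\cdot,\cdot)_V$, so the naively centered indicator $\mathbf 1_{v_0}-\frac1{|V|}\mathbf 1$ is \emph{not} admissible in the minimax for $\lambda_2$, and the correctly centered $y_{v_0}$ produces the factor $\frac{D}{D-\delta_V(v_0)}$ in place of the clean $\frac{|V|}{|V|-1}$ that appears when $\delta_V$ is constant. Reconciling these two through the concavity inequality above, and thereby extracting the correction factors $\delta_V(\min)$ and $\delta_V(\max)$, is the only nonroutine step; everything else is the standard Fiedler-type argument of \cite{MR318007} transported to the $(\cdot,\cdot)_V$ setting.
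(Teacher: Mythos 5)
Your proof is correct, but it is not the paper's argument, and the difference turns out to matter. The paper transplants Fiedler's device verbatim: it asserts that $\Tilde{L}=-L-\lambda_2\bigl(I-\frac{1}{|V|}J\bigr)$ and $\Tilde{M}=\lambda_N\bigl(I-\frac{1}{|V|}J\bigr)+L$ are positive semidefinite, evaluates their quadratic forms at the uncentered indicators $\chi_v$, and thereby gets the pointwise bounds $\lambda_2\le\frac{|V|}{|V|-1}r(v)\le\lambda_N$ for every $v$, from which the stated chain follows after weakening by $\delta_V(\min)$ and $\delta_V(\max)$. You instead bound the Rayleigh quotient directly, and the key divergence is that you center your test function for $\lambda_2$ with respect to the \emph{weighted} inner product, $y_{v_0}=\mathbf 1_{v_0}-\frac{\delta_V(v_0)}{D}\mathbf 1$ with $D=\sum_v\delta_V(v)$, and then recover the stated constants through the concavity inequality $\delta_V(\min)\,D\,(|V|-1)\le|V|\,\delta_V(v_0)\bigl(D-\delta_V(v_0)\bigr)$. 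This extra care buys more than elegance: $I-\frac{1}{|V|}J$ is the orthogonal projection onto $\langle\mathbf 1\rangle^{\perp}$ for the \emph{unweighted} inner product, not for $(\cdot,\cdot)_V$, and when $\delta_V$ is non-constant the paper's positive semidefiniteness claims, and even the pointwise inequalities it reads off at $\chi_v$, are false. For a single edge $e=\{1,2\}$ with $\delta_V(1)>\delta_V(2)$ one computes $\lambda_2=\lambda_N=\frac{\delta_E(e)(\delta_V(1)+\delta_V(2))}{4\delta_V(1)\delta_V(2)}$, while $\frac{|V|}{|V|-1}r(1)=\frac{\delta_E(e)}{2\delta_V(1)}$ is strictly smaller, so $(\Tilde{L}\chi_1,\chi_1)_V<0$. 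The theorem itself survives only because of the $\delta_V(\min)$ and $\delta_V(\max)$ correction factors, which is exactly what your argument exploits: I have checked your computations ($(-L\mathbf 1_{v_0},\mathbf 1_{v_0})_V=\delta_V(v_0)r(v_0)$, the bound $(x_{v_0},x_{v_0})_V\le\frac{|V|-1}{|V|}\delta_V(\max)$, the identity $(y_{v_0},y_{v_0})_V=\delta_V(v_0)\frac{D-\delta_V(v_0)}{D}$, and the endpoint verification of the concavity inequality, both endpoints reducing to $D\ge|V|\delta_V(\min)$), and they are sound. So your route is not merely different: it is a complete proof in the general weighted setting, whereas the paper's argument is rigorous only when $\delta_V$ is constant, which is precisely the case where the two approaches collapse into the same Fiedler computation.
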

 \begin{proof}

 It is easy to show $\Tilde{L}=-L-\lambda_2(I_{|V|}-\frac{1}{|V|}J)$ is positive semidefinite, where $J$ is a square matrix of order $|V|$ with all its entry equal to $1$. 
 Let, for all $v\in V$, $\chi_v\in\mathbb{R}^V$ be defined by $\chi_v(u)=1$ if $u=v$ and otherwise $\chi_v(u)=0$. Hence $(\Tilde{L}\chi_v,\chi_v)_V\ge0$ for all $v\in V$ and so $\lambda_2\le \frac{|V|}{|V|-1}\frac{1}{\delta_V(\min)}\min\limits_{v\in V}\sum\limits_{e\in E_V}\frac{\delta_E(e)}{|e|^2}(|e|-1)$.
 Similarly for the positive semidefinite matrix  $\Tilde{M}=\lambda_N(I-\frac{1}{n}J)_V-(-L)$, we have
  $(\Tilde{M}\chi_v,\chi_v)_V\ge0$ and which implies
 $\frac{|V|}{|V|-1}\max\limits_{v\in V}\sum\limits_{e\in E_V}\frac{\delta_E(e)}{|e|^2}(|e|-1)\le \lambda_N\delta_V(\max)$. This completes the proof.
 \end{proof}
 The maximum and minimum cardinality of hyperedges in a hypergraph are called \textit{rank} $(rk(G))$ and \textit{corank} $(cr(G))$, respectively, of the hypergraph $G$.
 
Removal of a vertex $v\in V$ from each hyperedge containing it, is called \textit{ weak deletion of $v$}. If weak deletion of a set of vertices increase the number of connected components of the hypergraph $G$ then the set is called \textit{weak vertex cut} of the hypergraph $G$. The \textit{weak connectivity number} $\kappa_w(G)$(  or simply $\kappa_w$) is the minimum size of the weak vertex cut in the hypergraph $G$.
\begin{thm}\label{th-upperbound}
Let $G=(V,E)$ be a hypergraph with $|V|(\ge3)$, such that $G$ contains at least one pair of non-adjacent vertices then there exists a constant $\bar{k}$ such that $\lambda_2\le\bar{k} d_{\max}\kappa_W(G)$.
\end{thm}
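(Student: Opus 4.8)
The plan is to bound $\lambda_2$ from above by exhibiting one test function in $\langle\mathbf 1\rangle^\perp$ with small Rayleigh quotient $R(-L,x)=\frac{(-Lx,x)_V}{(x,x)_V}$, using the variational characterization $\lambda_2=\inf_{x\in\langle\mathbf 1\rangle^\perp-\{\mathbf 0\}}R(-L,x)$ established before the statement. First I would dispose of the degenerate case: if $G$ is disconnected then $0$ is an eigenvalue of $L$ of multiplicity at least two, so $\lambda_2=0$ and the inequality is trivial; hence I assume $G$ connected. Since $G$ has a pair of non-adjacent vertices $u,v$ and $|V|\ge 3$, the set $V\setminus\{u,v\}$ is already a weak vertex cut (after weak deletion the surviving hyperedges sit inside $\{u\}$ or $\{v\}$, so $u,v$ fall into different components), so a minimum weak vertex cut $S_0$ with $|S_0|=\kappa_W(G)$ exists. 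Weak deletion of $S_0$ disconnects $G$, and I would group the resulting components into two nonempty classes so that $V\setminus S_0=A\sqcup B$ with the crucial separation property that \emph{no} hyperedge meets both $A$ and $B$. Writing $\mathrm{vol}(C)=\sum_{z\in C}\delta_V(z)$, this is the hypergraph analogue of Fiedler's cut.

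Next I would take $y=\chi_A$, the indicator of $A$, and set $x=y-\frac{\mathrm{vol}(A)}{\mathrm{vol}(V)}\mathbf 1$, its $(\cdot,\cdot)_V$-orthogonal projection onto $\langle\mathbf 1\rangle^\perp$. Because $L\mathbf 1=0$ (\Cref{evec1}) and $L$ is self-adjoint, $(-Lx,x)_V=(-Ly,y)_V$, so the energy identity \Cref{nsde} gives
\[ (-Lx,x)_V=\sum_{e\in E}\frac{\delta_E(e)}{|e|^2}\sum_{\{p,q\}\subset e}(\chi_A(p)-\chi_A(q))^2=\sum_{e\in E}\frac{\delta_E(e)}{|e|^2}\,|e\cap A|\,(|e|-|e\cap A|). \]
Here the separation property is decisive: if $e\cap A\neq\emptyset$ then $e\cap B=\emptyset$, so every vertex of $e$ outside $A$ lies in $S_0$, whence $|e|-|e\cap A|=|e\cap S_0|$ and only hyperedges through $S_0$ contribute. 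Using $\frac{|e\cap A|}{|e|}\le 1$ and regrouping over the vertices of $S_0$,
\[ (-Lx,x)_V=\sum_{e\in E}\frac{\delta_E(e)}{|e|^2}|e\cap A|\,|e\cap S_0|\le\sum_{e\in E}\frac{\delta_E(e)}{|e|}|e\cap S_0|=\sum_{z\in S_0}\sum_{e\in E_z}\frac{\delta_E(e)}{|e|}\le 2\,\kappa_W(G)\,d_{\max}, \]
where the last step uses the normalization $\delta_E(e)=w(e)\frac{|e|^2}{|e|-1}$ of \Cref{ex-diff}(\ref{L2}): then $\frac{\delta_E(e)}{|e|}=w(e)\frac{|e|}{|e|-1}\le 2w(e)$ since $|e|\ge 2$, so each inner sum is at most $2\,d(z)\le 2\,d_{\max}$.

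Finally I would bound the denominator from below: a direct computation gives $(x,x)_V=\mathrm{vol}(A)-\frac{\mathrm{vol}(A)^2}{\mathrm{vol}(V)}=\frac{\mathrm{vol}(A)(\mathrm{vol}(V)-\mathrm{vol}(A))}{\mathrm{vol}(V)}$, a fixed positive number since $\emptyset\ne A\subsetneq V$. Combining,
\[ \lambda_2\le R(-L,x)\le\frac{2\,\mathrm{vol}(V)}{\mathrm{vol}(A)\bigl(\mathrm{vol}(V)-\mathrm{vol}(A)\bigr)}\,d_{\max}\,\kappa_W(G), \]
so the theorem holds with $\bar k=\frac{2\,\mathrm{vol}(V)}{\mathrm{vol}(A)(\mathrm{vol}(V)-\mathrm{vol}(A))}$. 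The main obstacle is exactly this constant: $\bar k$ emerges depending on $G$ through the volumes of the two sides of the cut, and it degrades badly when the minimum cut is forced to be unbalanced (one side of tiny volume). Making $\bar k$ genuinely universal would require choosing $A\sqcup B$ so that both sides carry a definite fraction of $\mathrm{vol}(V)$, which a forced minimum weak cut need not allow; I expect the intended reading is that $\bar k$ may depend on $G$ and the content is the linear growth in $d_{\max}\,\kappa_W(G)$. A secondary point to handle is the link between $\delta_E$ and $d_{\max}$: the factor $2$ is specific to the weighted-Laplacian normalization, and for general $(\delta_V,\delta_E)$ one should replace it by $\max_{z\in V}\frac{1}{d(z)}\sum_{e\in E_z}\frac{\delta_E(e)}{|e|}$, absorbed into $\bar k$.
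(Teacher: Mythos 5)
Your proof is correct, and it reaches the bound by a genuinely different route than the paper. The paper runs Fiedler's original argument: writing $V=V_1\cup W\cup V_2$ with $W$ a minimum weak vertex cut and no hyperedge meeting both $V_1$ and $V_2$, it takes the signed test function $y$ equal to $|V_2|$ on $V_1$, $-|V_1|$ on $V_2$ and $0$ on $W$, computes $Ly$ pointwise (off $W$ one gets $(-Ly)(v)=k(v)y(v)$ with $k(v)=\sum_{e\in E_v}\frac{\delta_E(e)}{\delta_V(v)}\frac{|e\cap W|}{|e|^2}$), and bounds $k(v)\le\bar k\,d_{\max}|W|$ with the cut-independent constant $\bar k=\sup_{e\in E,v\in V}\bigl\{\frac{\delta_E(e)}{\delta_V(v)|e|^2}\bigr\}$; you instead feed the projected indicator of one side of the cut into the energy identity \Cref{nsde} and regroup the boundary terms over the cut vertices. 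Each approach buys something. The paper's constant does not see the cut at all, which is exactly what makes \Cref{cor-upper} ($\lambda_2\le\kappa_W(G)$ when $d_{\max}<cr(G)$, in the weighted-Laplacian normalization) an immediate corollary; your constant $\frac{2\,\mathrm{vol}(V)}{\mathrm{vol}(A)(\mathrm{vol}(V)-\mathrm{vol}(A))}$ degrades for unbalanced cuts, as you candidly note, and cannot recover that corollary. Conversely, your projection step makes the appeal to $\lambda_2=\inf_{x\in\langle\mathbf 1\rangle^\perp\setminus\{\mathbf 0\}}R(-L,x)$ legitimate for arbitrary $\delta_V$, whereas the paper's $y$ lies in $\langle\mathbf 1\rangle^\perp$ only when $\delta_V$ is constant on $V_1\cup V_2$ (or the cut happens to be volume-balanced), a point its proof passes over silently; so your version is the more careful one at the level of generality in which the theorem is stated. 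Your reading of the statement is also the right one: for a fixed hypergraph the bare existence of some constant is nearly vacuous, so the content lies in the explicit form of $\bar k$, and there the paper's cut-independent choice is the sharper and more structural of the two.
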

\begin{proof}
Let $W$ be the the weak vertex cut with $|W|=\kappa_w$. Clearly there exists a partition $V=V_1\cup W\cup V_2$ of $V$ such that no vertex of $V_1$ is adjacent to any vertex in $V_2$. Let us consider $y\in \mathbb{R}^V$ defined by $y(v)=|V_2|$ if $v\in V_1$, $y(v)=-|V_1|$ if $v\in V_2$, and $y(v)=0$ if $v\in W$. 
We define a function $k:V\to \mathbb{R}$ defined by 
$$k(v)=\sum_{e\in E_V}\sum_{u\in e\cap W}\frac{\delta_E(e)}{\delta_V(v)}\frac{1}{|e|^2}.$$
Suppose $\Bar{k}=\sup\limits_{e\in E,v\in V}\left\{\frac{\delta_E(e)}{\delta_V(v)}\frac{1}{|e|^2}\right\}$. It is easy to verify that, for all $v\in V_1\cup V_2$, $k
(v)\le d_{\max}|W|\Bar{k}$.  

Clearly, for any $v\in V_1$, there exists no $e\in E_v$ such that $e\cap V_2\neq\phi$. Hence for any $v\in V_1$, one has

\begin{align*}
    (Ly)(v)&=-\sum_{e\in E_V}\frac{\delta_E(e)}{\delta_V(v)}\frac{1}{|e|^2}\sum_{u\in e\cap w}|V_2|\\
    &=-k(v)|V_2|
\end{align*}
Similarly, for any $v\in V_2$, $(Ly)(v)=k(v)|V_1|$. Hence, \sloppy $(-Ly,y)_V\le\sum_{v\in V}\delta_V(v)k(v)(y(v))^2\le \Bar{k}d_{\max}|W|(y,y)_V$. Thus  $\lambda_2\le \Bar{k}(d_{\max}-1)|W|=\Bar{k}d_{\max}\kappa_W(G)$.
\end{proof}
\begin{rem}In the above result if we put $\delta_E(e)=\frac{|e|^2}{|e|-1}$ for all $e\in E$ and $\delta_V(v)=1$ for all $v\in V$, the diffusion operator $L$ becomes the diffusion operator described in \cite{banerjee2020synchronization}. This operator is also the negative of the Laplacian matrix for hypergraph described in \cite{MR4208993}. With the above choice of $\delta_E$ and $\delta_V$ we have $\Bar{k}=\frac{1}{cr(G)-1}$.
In the above theorem instead of the supremum $\bar{k}$, any upperbound of the set $\left\{\frac{\delta_E(e)}{\delta_V(v)}\frac{1}{|e|^2}\right\}_{e\in E,v\in V}$ yields an upperbound of $\lambda_2$, involving the weak connectivity number. Although we decided to go with the supremum to make the upperbound as sharp as possible.
\end{rem}
\begin{cor}\label{cor-upper}
Let $G=(V,E)$ be a hypergraph with $|V|(\ge3)$ such that $G$ contains at least one pair of non-adjacent vertices and $d_{\max}< cr(G)$ and  $\delta_V(v)=1$ for all $v\in V$, and $\delta_E(e)=w(e)\frac{|e|^2}{|e|-1}$ for all $e\in E $. Then $\lambda_2\le \kappa_W(G)$.
\end{cor}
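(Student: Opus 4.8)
The plan is to derive \Cref{cor-upper} as a direct specialization of \Cref{th-upperbound}, so the only real work is to verify that the specific choice of inner products collapses the generic bound $\lambda_2 \le \bar{k}\, d_{\max}\, \kappa_W(G)$ into the clean inequality $\lambda_2 \le \kappa_W(G)$. First I would invoke \Cref{th-upperbound} verbatim; its hypotheses (namely $|V|\ge 3$ and the existence of at least one non-adjacent pair of vertices) are assumed in the corollary, so the conclusion $\lambda_2 \le \bar{k}\, d_{\max}\, \kappa_W(G)$ holds immediately, where $\bar{k} = \sup_{e\in E,\, v\in V}\left\{\frac{\delta_E(e)}{\delta_V(v)}\frac{1}{|e|^2}\right\}$.

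Next I would substitute the prescribed inner products, $\delta_V(v)=1$ for all $v\in V$ and $\delta_E(e)=w(e)\frac{|e|^2}{|e|-1}$ for all $e\in E$, directly into $\bar{k}$. Under this choice the quantity inside the supremum simplifies since the factor $|e|^2$ cancels, giving $\frac{\delta_E(e)}{\delta_V(v)}\frac{1}{|e|^2} = \frac{w(e)}{|e|-1}$. Taking the supremum over all hyperedges and vertices, and assuming the unweighted regime $w(e)=1$ (consistent with the degree normalization used elsewhere, as in the remark following \Cref{th-upperbound}), the supremum is governed by the smallest hyperedge, yielding $\bar{k} = \frac{1}{cr(G)-1}$, exactly as recorded in the remark after \Cref{th-upperbound}.

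Finally I would combine the two pieces. Plugging $\bar{k}=\frac{1}{cr(G)-1}$ into the bound from \Cref{th-upperbound} gives
\begin{align*}
\lambda_2 \le \frac{d_{\max}}{cr(G)-1}\,\kappa_W(G).
\end{align*}
The corollary's extra hypothesis $d_{\max} < cr(G)$ is equivalent to $d_{\max} \le cr(G)-1$, so the prefactor $\frac{d_{\max}}{cr(G)-1} \le 1$, and therefore $\lambda_2 \le \kappa_W(G)$, as claimed.

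I expect the main (indeed the only) obstacle to be bookkeeping about the weight function: the corollary allows a general weight $w(e)$, so one must be careful that $\bar{k}=\frac{1}{cr(G)-1}$ genuinely holds rather than $\bar{k}=\max_e \frac{w(e)}{|e|-1}$. The cleanest reading is that the degree normalization $d_{\max}$ already absorbs the weights through $d(v)=\sum_{e\in E_v}w(e)$, so that the comparison $d_{\max}<cr(G)$ is the precise condition making the weighted prefactor at most one; I would make this dependence explicit when substituting, and otherwise the argument is a routine chain of inequalities built entirely on \Cref{th-upperbound}.
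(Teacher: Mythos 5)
Your overall strategy --- specialize \Cref{th-upperbound} and show that the prefactor collapses to at most $1$ --- is also the paper's strategy, and your chain of inequalities is sound in the unweighted case $w\equiv 1$. The genuine gap is exactly the point you flagged and then waved off: the corollary is stated for general weights $\delta_E(e)=w(e)\frac{|e|^2}{|e|-1}$, and your proposed resolution (``the degree normalization absorbs the weights, so $d_{\max}<cr(G)$ is the precise condition making the weighted prefactor at most one'') is false under your factorization of the prefactor as $\bar{k}\cdot d_{\max}$ with $\bar{k}=\sup_{e\in E}\frac{w(e)}{|e|-1}$. Concretely, take two hyperedges of size $5$ sharing one vertex, with weights $w(e_0)=3$ and $w(e_1)=1$. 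Then $cr(G)=5$, $d_{\max}=4<cr(G)$, and there are non-adjacent vertices, so all hypotheses hold; but $\bar{k}=\frac{3}{4}$, so $\bar{k}\,d_{\max}=3$ and \Cref{th-upperbound} only yields $\lambda_2\le 3\,\kappa_W(G)$. The trouble is structural: the factor $d_{\max}$ in the theorem is really counting hyperedges through a vertex (the unweighted fact $|E_v|\le d_{\max}$), while $\bar{k}$ carries the weight of a single worst hyperedge, and these two worst cases multiply instead of offsetting each other.

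The fix --- and this is what the paper's proof actually does --- is to bound the aggregated per-vertex quantity instead of factoring it. In the proof of \Cref{th-upperbound}, $\lambda_2$ is controlled by $\max_v k(v)$, and $k(v)\le \kappa_W(G)\sum_{e\in E_v}\frac{\delta_E(e)}{\delta_V(v)}\frac{1}{|e|^2}$; with the corollary's choices,
\begin{equation*}
\sum_{e\in E_v}\frac{\delta_E(e)}{\delta_V(v)}\frac{1}{|e|^2}
=\sum_{e\in E_v}\frac{w(e)}{|e|-1}
\le\frac{1}{cr(G)-1}\sum_{e\in E_v}w(e)
=\frac{d(v)}{cr(G)-1}
\le\frac{d_{\max}}{cr(G)-1}\le 1,
\end{equation*}
where the weights are absorbed with no loss because $\sum_{e\in E_v}w(e)$ is exactly $d(v)$ (in my example above this sum equals $1$ at the shared vertex, so the corollary does hold there). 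This is the content of the paper's one-line verification, modulo its notational sloppiness (it writes the bound as a bound on ``$\bar{k}$'' and misprints $\frac{w(e)}{|e|}$ for $\frac{w(e)}{|e|-1}$). So to cover the statement as written you must re-enter the proof of \Cref{th-upperbound} one step before its stated conclusion, rather than quote the conclusion verbatim. One further caveat applies to both your write-up and the paper's: passing from $d_{\max}<cr(G)$ to $\frac{d_{\max}}{cr(G)-1}\le 1$ uses $d_{\max}\le cr(G)-1$, which requires $d_{\max}$ to be an integer; for non-integer weighted degrees this step fails too.
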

\begin{proof}
It is easy to verify $ \sum_{e\in E_V}\frac{\delta_E(e)}{\delta_V(v)}\frac{1}{|e|^2}=\sum_{e\in E_V}\frac{w(e)}{|e|}\le\frac{d_{\max}}{cr(G)-1}$. Therefore,
$\Bar{k}\le \frac{d_{\max}}{cr(G)-1} $. Thus, the condition $d_{\max}< cr(G)$ leads us to $\Bar{k}\le 1$. Hence the result follows.
\end{proof}
\Cref{cor-upper} is stated and proved in \cite[Theorem 3.1]{MR4208993} independently. 
For any $S\subset V$, the collection of all the hyperedges  contain vertices from both the sets $S$ and $V\setminus S$ are called the \textit{ edge boundary of the set} $S$. The edge boundary of $S$ is denoted by $\partial S$.
\begin{thm}\label{2n}
Let $G$ be a hypergrph. For any nonempty $S\subset V$, we have
$$4\lambda_2\frac{\delta_V(min)}{\delta_E(\max)}\le \frac{|\partial S| |V|}{|S|(|V|-|S|)}\le \lambda_N\delta_V(\max)\max_{e\in E}\left\{\frac{|e|^2}{(|e|-1)\delta_E(e)}\right\}.
$$
\end{thm}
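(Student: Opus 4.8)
The plan is to test the Rayleigh quotient of $-L$ against a single explicit function built from the cut $(S,V\setminus S)$, and to read off both inequalities from the two variational characterizations recorded after \Cref{evec1}, namely $\lambda_2=\inf_{x\in\langle\mathbf 1\rangle^\perp-\{\mathbf 0\}}R(-L,x)$ and $\lambda_N=\sup_{x\neq\mathbf 0}R(-L,x)$. First I would fix $y\in\mathbb R^V$ by $y(v)=|V|-|S|$ for $v\in S$ and $y(v)=-|S|$ for $v\in V\setminus S$. For a hyperedge $e$ write $a_e=|e\cap S|$ and $b_e=|e\cap(V\setminus S)|$, so $a_e+b_e=|e|$ and $a_eb_e\neq 0$ exactly when $e\in\partial S$. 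Applying \Cref{nsde} and noting that a pair $\{u,v\}\subset e$ contributes $(y(u)-y(v))^2=|V|^2$ precisely when $u,v$ lie on opposite sides of the cut (and $0$ otherwise) gives the identity
$$(-Ly,y)_V=|V|^2\sum_{e\in\partial S}\frac{\delta_E(e)}{|e|^2}\,a_eb_e.$$
For the denominator, $(y,y)_V=(|V|-|S|)^2\sum_{v\in S}\delta_V(v)+|S|^2\sum_{v\notin S}\delta_V(v)$, which after factoring $|S|(|V|-|S|)|V|$ is sandwiched between $\delta_V(\min)\,|V|\,|S|(|V|-|S|)$ and $\delta_V(\max)\,|V|\,|S|(|V|-|S|)$.

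Both inequalities then come from bounding $a_eb_e$ in the two possible directions: for a boundary hyperedge $a_e,b_e\ge 1$, so the constraint $a_e+b_e=|e|$ forces $a_eb_e\le |e|^2/4$ on one side and $a_eb_e\ge |e|-1$ on the other. For the right-hand inequality I would use $\lambda_N\ge R(-L,y)$, i.e.\ $(-Ly,y)_V\le\lambda_N(y,y)_V$; replacing $a_eb_e$ by its lower bound $|e|-1$ and then each factor $\frac{\delta_E(e)(|e|-1)}{|e|^2}$ by its minimum $\bigl(\max_e\frac{|e|^2}{(|e|-1)\delta_E(e)}\bigr)^{-1}$ turns the numerator into $|V|^2|\partial S|\bigl(\max_e\frac{|e|^2}{(|e|-1)\delta_E(e)}\bigr)^{-1}$, while $(y,y)_V\le\delta_V(\max)|V||S|(|V|-|S|)$; dividing by $|V||S|(|V|-|S|)$ yields exactly the claimed upper bound. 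For the left-hand inequality I would instead use $\lambda_2\le R(-L,y)$, bound the numerator from above via $a_eb_e\le|e|^2/4$ (which cancels the $|e|^2$ and leaves $\tfrac14\sum_{e\in\partial S}\delta_E(e)\le\tfrac14\delta_E(\max)|\partial S|$), and bound the denominator from below by $\delta_V(\min)|V||S|(|V|-|S|)$; the factor $\tfrac14$ is what produces the $4$ in the final estimate.

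The step needing genuine care is invoking $\lambda_2\le R(-L,y)$: the characterization $\lambda_2=\inf_{x\in\langle\mathbf 1\rangle^\perp}R(-L,x)$ requires $y$ to be orthogonal to $\mathbf 1$ in the weighted inner product $(\cdot,\cdot)_V$, whereas by construction $y$ only satisfies $\sum_{v}y(v)=0$. To repair this cleanly one replaces $y$ by its projection $y'=y-\bar y\,\mathbf 1$ with $\bar y=(y,\mathbf 1)_V/(\mathbf 1,\mathbf 1)_V$; since $L\mathbf 1=\mathbf 0$ and $L$ is self-adjoint one gets $(-Ly',y')_V=(-Ly,y)_V$, while the denominator becomes the weighted variance $(y',y')_V=(y,y)_V-\bar y^{\,2}(\mathbf 1,\mathbf 1)_V$. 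Controlling this variance from below is the real obstacle, and it is exactly where the interplay between $\delta_V(\min)$ and $\delta_V(\max)$ enters; when $\delta_V$ is constant the projection is automatic and the displayed left inequality drops out immediately, so I would carry the argument out in this orthogonality-respecting form and track the resulting $\delta_V$-factors to land on the stated constant.
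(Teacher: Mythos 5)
Your proposal follows the paper's own route step for step: the same test vector (your $y$ is the paper's $z_s$), the same edge-by-edge evaluation of $(-Ly,y)_V$ via \Cref{nsde}, the same AM--GM bound $a_eb_e\le |e|^2/4$ for the $\lambda_2$ side, the complementary bound $a_eb_e\ge |e|-1$ for the $\lambda_N$ side, and the same sandwiching of $(y,y)_V$. Your right-hand inequality is complete and correct, since $\lambda_N\ge R(-L,x)$ holds for every nonzero $x$ with no orthogonality requirement. The subtlety you flag on the left-hand side is a genuine one, and in fact it is present in the paper's own proof: the paper simply writes $\lambda_2\le (-Lz_s,z_s)_V/(z_s,z_s)_V$, which is justified only when $z_s$ is orthogonal to $\mathbf 1$ in the weighted inner product $(\cdot,\cdot)_V$, i.e.\ essentially when $\delta_V$ is constant. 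Your projection repair $y'=y-\bar y\,\mathbf 1$ is exactly the right move, so on this point your write-up is more careful than the paper's.

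However, you stop at the step you call ``the real obstacle,'' and leaving it open leaves the left inequality unproved --- but that step is not an obstacle at all. Since $\sum_{v\in V}y(v)=0$, the unweighted sum of squares $\sum_{v\in V}(y(v)-c)^2$ is minimized over $c$ at $c=0$, so for every $c\in\mathbb{R}$,
\begin{align*}
\|y-c\mathbf 1\|_V^2=\sum_{v\in V}\delta_V(v)(y(v)-c)^2
&\ge\delta_V(\min)\sum_{v\in V}(y(v)-c)^2\\
&\ge\delta_V(\min)\sum_{v\in V}y(v)^2=\delta_V(\min)\,|S|(|V|-|S|)\,|V|.
\end{align*}
Taking $c=\bar y$ shows $(y',y')_V\ge \delta_V(\min)|S|(|V|-|S|)|V|$, i.e.\ the projected vector satisfies the very same lower bound you already established for $(y,y)_V$ (and $y'\neq 0$ because $y$ is nonconstant for a nonempty proper $S$, which the statement implicitly requires for the middle expression to be finite). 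Combining this with $(-Ly',y')_V=(-Ly,y)_V\le\tfrac14\delta_E(\max)|\partial S|\,|V|^2$ gives
$\lambda_2\le R(-L,y')\le \tfrac14\frac{\delta_E(\max)}{\delta_V(\min)}\frac{|\partial S|\,|V|}{|S|(|V|-|S|)}$,
which is exactly the stated constant; no further tracking of $\delta_V$-factors is needed. With this one observation inserted, your proof is complete, and it is in effect a corrected version of the paper's argument rather than a different one.
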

\begin{proof}
 We consider a function $z_s\in \mathbb{R}^V$ defined by 
 $z_s(v):=\begin{cases}
 |V|-|S| & \text{ if } v\in S
 \\
 -|S|    &\text{ otherwise }
 \end{cases}$ corresponding to the set $S\ne\phi$. Thus by using $A.M\ge G.M$ inequality we have
 \begin{align*}
     (-Lz_s,z_s)_V&=\frac{1}{2}\sum_{e\in E}\frac{\delta_E(e)}{|e|^2}\sum_{u,v\in e}(z_s(u)-z_s(v))^2\\
     &=\frac{1}{2}\sum_{e\in \partial S}\frac{\delta_E(e)}{|e|^2}2|e\setminus S||S\cap e||V|^2\\
     &\le\frac{1}{4}\delta_E(\max)|\partial S||V|^2.
 \end{align*}
\sloppy Now, as $(z_s,z_s)_V\ge \delta_V(\min)|S|(|V|-|S|)|V|$,   we have  $\lambda_2\le\frac{(-Lz_s,z_s)_V}{(z_s,z_s)_V}\le\frac{1}{4}\frac{\delta_E(\max)}{\delta_V(\min)}\frac{|\partial S| |V|}{|S|(|V|-|S|)}$.
 
 As for all $e\in \partial S$,and  $|e-S||S\cap e|\ge |e|-1$   we have  $(-Lz_s,z_s)_V =\frac{1}{2}\sum_{e\in \partial S}\frac{\delta_E(e)}{|e|^2}2|e-S||S\cap e||V|^2\ge |\partial S|\min\limits_{e\in E}\left\{\frac{\delta_E(|e|)}{|e|^2}(|e|-1)\right\}|V|^2$. Thus, $ \frac{|\partial S| |V|}{|S|(|V|-|S|)}\le \lambda_N\delta_V(\max)\max\limits_{e\in E}\left\{\frac{|e|^2}{(|e|-1)\delta_E(e)}\right\}$. This completes the proof.
\end{proof}
\begin{rem}
If $\delta_V(v)=1$ for all $v\in V$ and $\delta_E(e)=\frac{|e|^2}{|e|-1}$ then \Cref{2n} leads us to
$4\lambda_2\frac{cr(G)-1}{rk(G)^2}\le \frac{|\partial S||V|}{|S|(|V|-|S|)}\le \lambda_N$ and this implies the result given in \cite[Theorem 3.2, p-12]{MR4208993}.
\end{rem}
\begin{cor}
Let $G$ be a hypergrph. For any nonempty $S\subset V$, we have
$$4\lambda_2\frac{\delta_V(min)}{\delta_E(\max)}\le \frac{|\delta S| |V|}{|S|(|V|-|S|)}\le \lambda_N\frac{\delta_V(\max)}{\delta_E(\min)}\frac{(rk(G))^2}{cr(G)-1}.
$$
\end{cor}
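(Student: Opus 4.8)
The plan is to deduce this directly from \Cref{2n}, which already supplies both of the required bounds, except that its upper estimate carries the edge-dependent quantity $\max_{e\in E}\left\{\frac{|e|^2}{(|e|-1)\delta_E(e)}\right\}$ in place of the global expression $\frac{1}{\delta_E(\min)}\frac{(rk(G))^2}{cr(G)-1}$. Since the left-hand inequality $4\lambda_2\frac{\delta_V(\min)}{\delta_E(\max)}\le \frac{|\partial S||V|}{|S|(|V|-|S|)}$ is verbatim the same in the two statements, nothing needs to be done on that side; the entire content of the corollary is a coarsening of the right-hand side.

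For the upper bound, I would estimate the maximum factor by factor. For every hyperedge $e\in E$ one has $|e|\le rk(G)$ and hence $|e|^2\le (rk(G))^2$; likewise $|e|-1\ge cr(G)-1$ and $\delta_E(e)\ge \delta_E(\min)$, the last of these being strictly positive since $\delta_E$ takes values in $\mathbb{R}^+$. Invoking the standing assumption $|e|\ge 2$ (the hypergraph has no loops) guarantees $cr(G)-1>0$, so all three monotonicities combine to give $\frac{|e|^2}{(|e|-1)\delta_E(e)}\le \frac{(rk(G))^2}{(cr(G)-1)\delta_E(\min)}$ for each $e\in E$. Taking the maximum over $e$ preserves this bound, so that $\max_{e\in E}\left\{\frac{|e|^2}{(|e|-1)\delta_E(e)}\right\}\le \frac{1}{\delta_E(\min)}\frac{(rk(G))^2}{cr(G)-1}$.

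Substituting this estimate into the right-hand side of \Cref{2n} and using $\delta_V(\max)>0$ and $\lambda_N\ge 0$ yields $\frac{|\partial S||V|}{|S|(|V|-|S|)}\le \lambda_N\frac{\delta_V(\max)}{\delta_E(\min)}\frac{(rk(G))^2}{cr(G)-1}$, which together with the unchanged lower bound completes the argument. I do not expect any genuine obstacle: the result is simply a uniform weakening of the sharp per-edge bound of \Cref{2n}, trading the edge-wise maximum for the rank and corank data of $G$. The only point worth flagging is notational, namely that the symbol $|\delta S|$ in the statement is to be read as the edge boundary $|\partial S|$ of \Cref{2n}.
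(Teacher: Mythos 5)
Your proof is correct and is precisely the argument the paper intends: the corollary is stated immediately after \Cref{2n} with no separate proof, being the coarsening obtained by bounding $|e|^2\le (rk(G))^2$, $|e|-1\ge cr(G)-1>0$, and $\delta_E(e)\ge\delta_E(\min)$ termwise in the edge-wise maximum. Your reading of $|\delta S|$ as the edge boundary $|\partial S|$ is also the right interpretation of that typo.
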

Let $mc(G):=\max\{|\partial S|:\phi\neq S\subset V\}$ and $bw(G):=\min\left\{|\partial S|:s\subset V,|S|=\left\lfloor\frac{|V|}{2}\right\rfloor\right\}$ be the \textit{maximum cut} and  \textit{bipartition width}, respectively, of a hypergraph $G(V,E)$. Now we have the following corollaries.

\begin{cor}
For any hypergraph $G(V,E)$, $$mc(G)\le\frac{|V|}{4}\lambda_N\delta_V(\max)\max\limits_{e\in E}\left\{\frac{|e|^2}{(|e|-1)\delta_E(e)}\right\}.$$
\end{cor}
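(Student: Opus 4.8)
The plan is to obtain the maximum-cut bound as an immediate consequence of the left-to-right inequality in \Cref{2n}, which already controls the normalized edge-boundary quantity $\frac{|\partial S||V|}{|S|(|V|-|S|)}$ from above by $\lambda_N$ and the relevant $\delta$-factors. Since $mc(G)$ is the maximum of $|\partial S|$ over all nonempty proper subsets $S$, I would fix an arbitrary such $S$ and rearrange the bound of \Cref{2n} to isolate $|\partial S|$.

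Concretely, the upper inequality of \Cref{2n} reads
\begin{align*}
\frac{|\partial S||V|}{|S|(|V|-|S|)}\le \lambda_N\delta_V(\max)\max_{e\in E}\left\{\frac{|e|^2}{(|e|-1)\delta_E(e)}\right\}.
\end{align*}
Solving for $|\partial S|$ gives
\begin{align*}
|\partial S|\le \frac{|S|(|V|-|S|)}{|V|}\lambda_N\delta_V(\max)\max_{e\in E}\left\{\frac{|e|^2}{(|e|-1)\delta_E(e)}\right\}.
\end{align*}
The only remaining step is to bound the combinatorial prefactor $\frac{|S|(|V|-|S|)}{|V|}$ independently of $S$. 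I would use the elementary inequality $|S|(|V|-|S|)\le \frac{|V|^2}{4}$, valid for every integer $|S|$ between $0$ and $|V|$ (with equality when $|S|=|V|/2$), which follows from the AM--GM inequality applied to $|S|$ and $|V|-|S|$, or equivalently from the fact that the quadratic $t\mapsto t(|V|-t)$ is maximized at $t=|V|/2$. Dividing by $|V|$ yields $\frac{|S|(|V|-|S|)}{|V|}\le \frac{|V|}{4}$.

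Substituting this into the previous display produces exactly the claimed bound for that fixed $S$; since the right-hand side no longer depends on $S$, taking the maximum over all nonempty proper subsets gives $mc(G)\le \frac{|V|}{4}\lambda_N\delta_V(\max)\max_{e\in E}\left\{\frac{|e|^2}{(|e|-1)\delta_E(e)}\right\}$, as required. I do not anticipate any genuine obstacle here: the entire content is the prior theorem together with the trivial maximization of $t(|V|-t)$, so the proof is essentially a one-line corollary once \Cref{2n} is in hand. The only point deserving a moment's care is to confirm that the supremizing subset in the definition of $mc(G)$ is indeed nonempty and proper, so that \Cref{2n} applies to it; this holds because $|\partial S|=0$ when $S=\emptyset$ or $S=V$, so those degenerate cases never attain the maximum and may be ignored.
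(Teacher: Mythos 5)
Your proof is correct and follows exactly the route the paper intends: the corollary is stated as an immediate consequence of \Cref{2n}, obtained by bounding $|S|(|V|-|S|)\le |V|^2/4$ and taking the maximum over nonempty $S$, precisely as you do. Your closing remark about the degenerate subsets ($S=\emptyset$ or $S=V$) is a sensible extra check that the paper leaves implicit.
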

\begin{cor}
For any hypergraph $G(V,E)$ if $\alpha(|V|)=\frac{4}{|V|}$ when $|V|$ is even, and $\alpha(|V|)=\frac{4|V|}{|V|^2-1}$ when $|V|$ is odd then $$4\lambda_2\frac{\delta_V(min)}{\delta_E(\max)}\le \alpha(|V|)bw(G)\le \lambda_N\delta_V(\max)\max_{e\in E}\left\{\frac{|e|^2}{(|e|-1)\delta_E(e)}\right\}.
$$
\end{cor}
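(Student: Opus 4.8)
The plan is to read this off directly from \Cref{2n} by specializing its two-sided isoperimetric bound to an optimal set of cardinality $\lfloor |V|/2\rfloor$. Concretely, among all $S\subset V$ with $|S|=\lfloor |V|/2\rfloor$, I would choose one, say $S^\ast$, that realizes the minimum defining the bipartition width, so that $|\partial S^\ast|=bw(G)$; such an $S^\ast$ exists since $V$ is finite and nonempty (and $S^\ast\neq\emptyset$ provided $|V|\ge 2$).

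The key computation is to evaluate the geometric factor $\frac{|V|}{|S^\ast|(|V|-|S^\ast|)}$ appearing in \Cref{2n}, and to recognize it as exactly $\alpha(|V|)$. I would split into parities. If $|V|$ is even then $|S^\ast|=|V|/2$ and $|V|-|S^\ast|=|V|/2$, so $|S^\ast|(|V|-|S^\ast|)=|V|^2/4$ and the factor equals $4/|V|=\alpha(|V|)$. If $|V|$ is odd then $|S^\ast|=(|V|-1)/2$ and $|V|-|S^\ast|=(|V|+1)/2$, so $|S^\ast|(|V|-|S^\ast|)=(|V|^2-1)/4$ and the factor equals $4|V|/(|V|^2-1)=\alpha(|V|)$. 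In either case $\frac{|V|}{|S^\ast|(|V|-|S^\ast|)}=\alpha(|V|)$, and therefore $\frac{|\partial S^\ast|\,|V|}{|S^\ast|(|V|-|S^\ast|)}=\alpha(|V|)\,bw(G)$.

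It remains only to invoke \Cref{2n}, which holds for every nonempty $S\subset V$; I would apply both of its inequalities to the single set $S^\ast$. The left inequality of \Cref{2n} gives $4\lambda_2\frac{\delta_V(\min)}{\delta_E(\max)}\le \alpha(|V|)\,bw(G)$, while the right inequality gives $\alpha(|V|)\,bw(G)\le \lambda_N\,\delta_V(\max)\max_{e\in E}\{|e|^2/((|e|-1)\delta_E(e))\}$. Chaining these two yields the stated double inequality.

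As for difficulty, there is essentially no obstacle: the statement is a direct specialization of \Cref{2n}, and the only point needing care is the elementary parity computation of $\frac{|V|}{|S|(|V|-|S|)}$ at $|S|=\lfloor|V|/2\rfloor$, which is precisely what the two-branch definition of $\alpha$ encodes. The one subtlety worth flagging is that the \emph{same} set $S^\ast$ is used for both the lower and the upper bound; this is legitimate exactly because \Cref{2n} is a universal statement over all nonempty $S$, so no separate optimization is required for the two sides.
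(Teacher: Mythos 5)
Your proposal is correct and is exactly the argument the paper intends: the corollary is stated as an unproved consequence of \Cref{2n}, obtained by applying that theorem to a minimizing set $S^\ast$ with $|S^\ast|=\lfloor |V|/2\rfloor$, and your parity computation showing $\frac{|V|}{|S^\ast|(|V|-|S^\ast|)}=\alpha(|V|)$ is precisely what the two-branch definition of $\alpha$ encodes. No gaps; the observation that a single $S^\ast$ serves both inequalities because \Cref{2n} is universal over nonempty $S$ is the right justification.
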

Now we recall  \textit{Cheeger constant} 
$$h(G):=\min_{S(\ne\phi)\subset V}\left\{\frac{|\partial S|}{\min(|S|,(|V-S|))}\right\}$$ of a hypergraph $G$ \cite{MR4208993}.
\begin{cor}
If $G$ is a connected hypergraph then $\lambda_2\le \frac{1}{2}\frac{\delta_E(\max)}{\delta_V(\min)}h(G)$.
\end{cor}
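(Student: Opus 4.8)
The plan is to derive the Cheeger-type bound $\lambda_2\le \tfrac{1}{2}\tfrac{\delta_E(\max)}{\delta_V(\min)}h(G)$ directly from the variational characterization $\lambda_2=\inf_{x\perp \mathbf{1}}R(-L,x)$ established earlier, by exhibiting a single well-chosen test function $x$. Observe that $h(G)$ is realized by some set $S\subset V$ with $|S|\le |V|-|S|$ (we may assume $\min(|S|,|V\setminus S|)=|S|$ by symmetry), so the strategy is to feed the indicator-type function $z_S$ from the proof of \Cref{2n} into the Rayleigh quotient and simplify. Concretely I would reuse $z_S(v)=|V|-|S|$ on $S$ and $z_S(v)=-|S|$ off $S$, which is already orthogonal to $\mathbf{1}$ in a suitable sense; then $\lambda_2\le R(-L,z_S)=\dfrac{(-Lz_S,z_S)_V}{(z_S,z_S)_V}$.

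First I would bound the numerator from above using the edge-boundary computation from \Cref{2n}: by \Cref{nsde} only hyperedges in $\partial S$ contribute, and on each such $e$ the squared-difference sum collapses so that $(-Lz_S,z_S)_V\le \tfrac14\,\delta_E(\max)\,|\partial S|\,|V|^2$. Next I would bound the denominator from below, namely $(z_S,z_S)_V\ge \delta_V(\min)\,|S|\,(|V|-|S|)\,|V|$, exactly as in that earlier proof. Dividing, the factor $|V|$ cancels and I obtain
\begin{align*}
\lambda_2\le \frac14\,\frac{\delta_E(\max)}{\delta_V(\min)}\,\frac{|\partial S|\,|V|}{|S|\,(|V|-|S|)}.
\end{align*}
The remaining point is purely combinatorial: since $|S|\le |V|-|S|$ we have $|V|-|S|\ge \tfrac{|V|}{2}$, hence $\dfrac{|V|}{|V|-|S|}\le 2$, which turns the right-hand side into $\tfrac12\,\tfrac{\delta_E(\max)}{\delta_V(\min)}\,\tfrac{|\partial S|}{|S|}=\tfrac12\,\tfrac{\delta_E(\max)}{\delta_V(\min)}\,\tfrac{|\partial S|}{\min(|S|,|V|-|S|)}$.

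Finally, choosing $S$ to be the minimizer in the definition of $h(G)$ makes the last fraction equal to $h(G)$, yielding the claim. Connectedness of $G$ guarantees $|\partial S|\ge 1$ for every proper nonempty $S$, so $h(G)$ is well defined and positive, and also ensures that the eigenspace of $0$ is one-dimensional so that $\lambda_2$ is genuinely the relevant next eigenvalue via \Cref{evec1}. I do not expect a serious obstacle here; the main subtlety is simply the bookkeeping of which of $|S|$ and $|V|-|S|$ is the smaller, and making sure the factor $\tfrac14\to\tfrac12$ is justified by the inequality $\tfrac{|V|}{|V|-|S|}\le 2$ rather than by a stray constant. Everything else is an application of the numerator and denominator estimates already carried out in \Cref{2n}.
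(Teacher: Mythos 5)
Your proposal is correct and follows essentially the same route as the paper: the paper's proof simply invokes \Cref{2n} as a black box (whose proof uses exactly your test function $z_S$ and your numerator/denominator bounds) and then applies the same observation that a minimizer $S$ of $h(G)$ can be taken with $|S|\le\frac{1}{2}|V|$, so that $\frac{|V|}{|V|-|S|}\le 2$ converts the factor $\frac{1}{4}$ into $\frac{1}{2}$. The only difference is that you inline the proof of \Cref{2n} rather than citing it.
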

\begin{proof}
This result follows immediately from \Cref{2n}. Clearly, there exists $S\subset V$ such that $h(G)=\frac{|\partial S|}{|S|}$ and $|S|\le \frac{1}{2}|V|$.  This leads us to $\frac{|V|}{|V|-|S|}\le 2$. Hence by \Cref{2n}, $\lambda_2\le \frac{1}{2}\frac{\delta_E(\max)}{\delta_V(\min)}h(G)$.  
\end{proof}
We recall that in an $m$- uniform hypergraph $G(V,E)$,  $|e|=m$ for all $e\in E$. For any $m$-uniform hypergraph, the \Cref{2n} can be represented as follows.
\begin{prop}\label{rod-gen} Let $G$ be an $m$-uniform hypergraph. Suppose $\gamma:\mathbb{N}\to\mathbb{R}$ is defined by
$\gamma(m)=1$ if $m$ is even and $\gamma(m)=\frac{m^2}{m^2-1}$ if $m$ is odd then for any nonempty $S\subset V$, we have
$$\lambda_N\delta_V(\max)\max_{e\in E}\left\{\frac{m^2}{(m-1)\delta_E(e)}\right\}\ge \frac{|\partial S| |V|}{|S|(|V|-|S|)}\ge4\lambda_2\frac{\delta_V(min)}{\delta_E(\max)}\gamma(m) .
$$
\end{prop}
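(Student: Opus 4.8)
The plan is to derive both inequalities from the proof machinery already set up for \Cref{2n}, reusing the same test function, and to extract the extra factor $\gamma(m)$ from a sharper combinatorial estimate that is available only in the uniform case. The upper bound requires no new work at all: substituting $|e|=m$ for every $e\in E$ directly into the right-hand inequality of \Cref{2n} gives $\frac{|\partial S||V|}{|S|(|V|-|S|)}\le \lambda_N\delta_V(\max)\max_{e\in E}\{\frac{m^2}{(m-1)\delta_E(e)}\}$, which is precisely the claimed upper bound.

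For the lower bound I would revisit the computation of $(-Lz_s,z_s)_V$ with the same test function $z_s$ used in \Cref{2n}, namely $z_s(v)=|V|-|S|$ for $v\in S$ and $z_s(v)=-|S|$ otherwise. As there, only hyperedges in $\partial S$ contribute, and for such an $e$ the inner double sum equals $2\,|e\cap S|\,|e\setminus S|\,|V|^2$, so that $(-Lz_s,z_s)_V=\sum_{e\in\partial S}\frac{\delta_E(e)}{m^2}|e\cap S|\,|e\setminus S|\,|V|^2$. Combining this with the bound $(z_s,z_s)_V\ge \delta_V(\min)\,|S|(|V|-|S|)\,|V|$ and the variational estimate $\lambda_2\le R(-L,z_s)$ produces an upper estimate for $\lambda_2$ governed by $\max_{e\in\partial S}\frac{|e\cap S|\,|e\setminus S|}{m^2}$.

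The crucial step, and the only place uniformity enters, is to sharpen the bound on $\frac{|e\cap S|\,|e\setminus S|}{m^2}$. Writing $a=|e\cap S|$ and $b=|e\setminus S|$ we have $a+b=m$ with $a,b$ nonnegative integers, so $ab$ is maximized at $a=b=m/2$ when $m$ is even, giving $ab\le m^2/4$, and at $\{a,b\}=\{(m-1)/2,(m+1)/2\}$ when $m$ is odd, giving $ab\le (m^2-1)/4$. In both cases $\frac{ab}{m^2}\le \frac{1}{4\gamma(m)}$, exactly because $\frac{1}{4\gamma(m)}$ equals $\frac14$ for even $m$ and $\frac{m^2-1}{4m^2}$ for odd $m$. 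Feeding this into the estimate above yields $\lambda_2\le \frac{\delta_E(\max)}{4\gamma(m)\,\delta_V(\min)}\,\frac{|\partial S||V|}{|S|(|V|-|S|)}$, which rearranges to the desired lower bound $4\lambda_2\frac{\delta_V(\min)}{\delta_E(\max)}\gamma(m)\le \frac{|\partial S||V|}{|S|(|V|-|S|)}$.

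The main obstacle here is combinatorial rather than analytic: one must notice that for odd $m$ the integrality of $a$ and $b$ rules out the continuous optimum $a=b=m/2$ that the general \Cref{2n} is forced to use, and that the true integer maximum $(m^2-1)/4$ is exactly what generates the correction factor $\gamma(m)$. Everything else is a faithful repetition of the argument for \Cref{2n} with $|e|$ replaced throughout by $m$; I would only double-check, as was done there, that $z_s$ is admissible for the Rayleigh-quotient characterization of $\lambda_2$.
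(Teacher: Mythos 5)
Your proposal is correct and follows essentially the same route as the paper: the paper's proof is exactly a repetition of the argument for \Cref{2n} with $|e|=m$, combined with the integer-optimization fact that for $e\in\partial S$ with $|e\cap S|=s$ one has $(m-s)s\le \frac{m^2}{4}$ for even $m$ and $(m-s)s\le\frac{m^2-1}{4}$ for odd $m$, which is precisely your key combinatorial step producing $\gamma(m)$. The only difference is that you spell out the substitution and the rearrangement explicitly, which the paper leaves implicit.
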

\begin{proof} The proof is similar as described in
\Cref{2n}, with the fact that for any $m$-hyperedge with $e\cap S=s$, we have $(m-s)s\le \begin{cases}
                    \frac{m^2}{4}& \text{~if~}  $m$ \text{~is even~}  \\
                    \frac{m^2-1}{4} & \text{~if~} $m$ \text{~is odd~}
\end{cases}$.  
\end{proof}
\begin{rem}
If $\delta_V(v)=1$ for all $v\in V$ and  $\delta_E(e)=|e|$ then \Cref{rod-gen} provides the same result stated in \cite[Lemma-1, sec.2, p.917]{rodriguez2009Laplacian}.
\end{rem}

Our next result is a generalization of \cite[Theorem-4.2]{MR4208993}. The proof of the same is also similar to the proof of \cite[Theorem-4.2]{MR4208993}. 

\begin{thm}
If $G$ be a hypergraph with $\lambda_2\le r(v)$ for all $v\in V$ then $$\sqrt{\lambda_2(2r_0-\lambda_2)}\ge \frac{\delta_E(\min)}{\delta_V(\max)(rk(G))^2}h(G).$$
\end{thm}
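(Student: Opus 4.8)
The plan is to run the classical ``hard direction'' Cheeger argument (the one giving $h(G)\lesssim\sqrt{\lambda_2(2\Delta-\lambda_2)}$ for graphs) in the weighted hypergraph setting, with $-L$ playing the role of the combinatorial Laplacian and $r_0$ the role of the maximum degree. First I would pass from the eigenfunction to a nonnegative test function. Let $g$ be the eigenfunction of $-L$ for $\lambda_2$, so $(g,\mathbf 1)_V=0$ and $-Lg=\lambda_2 g$, and set $g_+=\max(g,0)$. Since $\{g>0\}$ and $\{g<0\}$ are disjoint, one of them has at most $|V|/2$ vertices; replacing $g$ by $-g$ if necessary, assume $V_+:=\{v:g(v)>0\}$ satisfies $|V_+|\le|V|/2$. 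Writing the Dirichlet form as in \Cref{nsde}, the elementary edge-wise inequality $(g(u)-g(v))(g_+(u)-g_+(v))\ge(g_+(u)-g_+(v))^2$ together with $(-Lg,g_+)_V=\lambda_2(g,g_+)_V=\lambda_2(g_+,g_+)_V$ gives the Rayleigh bound $(-Lg_+,g_+)_V\le\lambda_2(g_+,g_+)_V$. Put $\phi:=g_+$; it is supported on $V_+$, so every superlevel set lies in the ``small side''.

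Next I would carry out a co-area (threshold) step. For $t\ge0$ let $S_t=\{v:\phi(v)>t\}$. Since $S_t\subseteq V_+$ has $|S_t|\le|V|/2$, the definition of $h(G)$ yields $|\partial S_t|\ge h(G)|S_t|$. A hyperedge $e$ lies in $\partial S_t$ exactly when $t$ falls between the minimum and the maximum of $\phi$ on $e$; integrating $\int_0^\infty(\cdot)\,2t\,dt$ therefore gives
$$\sum_{e\in E}\Big(\big(\max_{v\in e}\phi(v)\big)^2-\big(\min_{v\in e}\phi(v)\big)^2\Big)=\int_0^\infty|\partial S_t|\,2t\,dt\ \ge\ h(G)\int_0^\infty|S_t|\,2t\,dt=h(G)\sum_{v\in V}\phi(v)^2 .$$
Bounding the left-hand summand by $\sum_{\{u,v\}\subset e}|\phi(u)^2-\phi(v)^2|$ (the extremal pair is one of the terms) turns this into a sum over incident pairs, which is the form amenable to Cauchy--Schwarz.

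The crux is the Cauchy--Schwarz split, and here the weight must be chosen to reproduce $r(v)$. Applying Cauchy--Schwarz to $\sum_e\sum_{\{u,v\}\subset e}|\phi(u)-\phi(v)|\,|\phi(u)+\phi(v)|$ with the common weight $\tfrac{\delta_E(e)}{|e|^2}$, the first factor is $\sqrt{(-L\phi,\phi)_V}$, while the identity $(\phi(u)+\phi(v))^2=2\phi(u)^2+2\phi(v)^2-(\phi(u)-\phi(v))^2$ collapses the second factor, via $\sum_{e\in E_v}\tfrac{\delta_E(e)(|e|-1)}{|e|^2}=r(v)\delta_V(v)$, into $\sqrt{\,2\sum_v r(v)\delta_V(v)\phi(v)^2-(-L\phi,\phi)_V\,}$. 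To reconcile the unweighted left-hand side from the co-area step with this weighted Cauchy--Schwarz I would insert the crude bound $\tfrac{\delta_E(e)}{|e|^2}\ge\tfrac{\delta_E(\min)}{(rk(G))^2}$, which produces the constant $\tfrac{(rk(G))^2}{\delta_E(\min)}$; the factor $\delta_V(\max)$ then appears from $\sum_v\delta_V(v)\phi^2\le\delta_V(\max)\sum_v\phi^2$. Finally, writing $E:=(-L\phi,\phi)_V$ and $A:=\sum_v r(v)\delta_V(v)\phi(v)^2$, the map $t\mapsto\sqrt{t(2A-t)}$ is increasing on $[0,A]$, and the hypothesis $\lambda_2\le r(v)$ is precisely what guarantees $E\le\lambda_2(\phi,\phi)_V\le A$ and $A\le r_0(\phi,\phi)_V$, so that the right-hand side is at most $(\phi,\phi)_V\sqrt{\lambda_2(2r_0-\lambda_2)}$. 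Combining these and dividing by $\sum_v\phi(v)^2$ gives $h(G)\le\tfrac{\delta_V(\max)(rk(G))^2}{\delta_E(\min)}\sqrt{\lambda_2(2r_0-\lambda_2)}$, which is the asserted inequality after rearrangement.

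I expect the main obstacle to be bookkeeping the weights through Cauchy--Schwarz: one must choose the weight $\tfrac{\delta_E(e)}{|e|^2}$ so that $r(v)$ (with its $|e|-1$ factor) emerges exactly and the refined quantity $2r_0-\lambda_2$ appears rather than a crude $2r_0$, while the unweighted hyperedge/vertex counting in $h(G)$ forces the compensating constants $\delta_E(\min)$, $\delta_V(\max)$, $(rk(G))^2$. The monotonicity argument in the last step is the other delicate point, since it relies essentially on the structural hypothesis $\lambda_2\le r(v)$ to keep the product in the regime where it is controlled by $\sqrt{\lambda_2(2r_0-\lambda_2)}$.
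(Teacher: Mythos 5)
Your proof is correct, and it follows the same overall Cheeger-type skeleton as the paper's: restrict the $\lambda_2$-eigenfunction to its positive part $\phi$, split $|\phi(u)^2-\phi(v)^2|=|\phi(u)-\phi(v)|\,|\phi(u)+\phi(v)|$ by Cauchy--Schwarz with the weights $\delta_E(e)/|e|^2$ (so that $\sum_{\{u,v\}\subset e}(\phi(u)+\phi(v))^2$ collapses, via $(|e|-1)$-counting, into $2\sum_v r(v)\delta_V(v)\phi(v)^2$ minus the Dirichlet form), and then run a level-set/co-area argument to produce $h(G)$ together with the constants $\delta_E(\min)/(rk(G))^2$ and $\delta_V(\max)$. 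Where you genuinely diverge is in how the positive part is controlled and where the hypothesis $\lambda_2\le r(v)$ is spent. The paper works with $V_1=\{z_2\ge 0\}$, keeps the boundary cross-terms $\alpha=\sum_{e\in\partial V_1}\frac{\delta_E(e)}{|e|^2}\sum z_2(u)z_2(v)$ explicit, and uses the hypothesis to show the correction term $4\sum\frac{\delta_E(e)}{|e|^2}\sum y(u)y(v)+\alpha$ is nonnegative, which is what lets it pass from sums over pairs in $e\cap V_1$ to sums over all pairs in $e$ before applying Cauchy--Schwarz. You instead obtain the clean Rayleigh bound $(-Lg_+,g_+)_V\le\lambda_2(g_+,g_+)_V$ from the standard edgewise inequality $(g(u)-g(v))(g_+(u)-g_+(v))\ge(g_+(u)-g_+(v))^2$, and you invoke the hypothesis only at the very end, through monotonicity of $t\mapsto t(2A-t)$ on $[0,A]$; this is more modular, and a one-line variant of your final step (replace $A$ by $r_0(\phi,\phi)_V$ before invoking monotonicity) shows that the pointwise hypothesis can even be relaxed to $\lambda_2\le r_0$. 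A second substantive difference is your sign normalization $|V_+|\le|V|/2$: since $h(G)$ carries $\min(|S|,|V\setminus S|)$ in its denominator, the bound $|\partial S_t|\ge h(G)|S_t|$ is only available when the superlevel sets lie on the small side. The paper applies this bound to its level sets $V_i\subseteq\{z_2>0\}$ without securing that condition, so your reduction is not pedantry --- it is exactly what legitimizes the co-area step, and your write-up is the more careful of the two on this point.
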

\begin{proof}
Let $z_2$ be the eigen function of $-L$ corresponding to the eigenvalue $\lambda_2$. Suppose that $V_1=\{v\in V:z_2(v)\ge0\}$ and $V_2=\{v\in V:z_2(v)<0\}$.
Let $y\in\mathbb{R}^V$ be defined by $y(v)=z_2(v)$ if $v\in V_1$, otherwise $y(v)=0 $.  

Since, $$\lambda_2(y,y)_V\ge \left(\sum_{e\in E}\frac{\delta_E(e)}{|e|^2}\sum\limits_{\{u,v\}\subset e\cap V_1}(y(u)-y(v))^2\right)-\sum\limits_{e\in \partial V_1}\frac{\delta_E(e)}{|e|^2}\sum\limits_{u\in e\cap V_2;v\in e\cap V_1}z_2(u)z_2(v)$$
and 
$$(2r_0-\lambda_2)(y,y)_V\ge \left(\sum_{e\in E}\frac{\delta_E(e)}{|e|^2}\sum\limits_{\{u,v\}\subset  e\cap V_1}(y(u)+y(v))^2\right)+\sum\limits_{e\in \partial V_1}\frac{\delta_E(e)}{|e|^2}\sum\limits_{u\in e\cap V_2;v\in e\cap V_1}z_2(u)z_2(v), $$ we  conclude that 
\begin{align}\label{l1}
    \lambda_2(2r_0-\lambda_2)(y,y)_V^2&\ge \left(\sum_{e\in E}\frac{\delta_E(e)}{|e|^2}\sum\limits_{\{u,v\}\subset  e\cap V_1}(y(u)-y(v))^2\right)\left(\sum_{e\in E}\frac{\delta_E(e)}{|e|^2}\sum\limits_{\{u,v\}\subset  e\cap V_1}(y(u)+y(v))^2\right)\notag \\&-\alpha\left(4\sum_{e\in E}\frac{\delta_E(e)}{|e|^2}\sum\limits_{\{u,v\}\subset  e\cap V_1}y(u)y(v)+\alpha\right),
\end{align}
where $\alpha=\sum\limits_{e\in \partial V_1}\frac{\delta_E(e)}{|e|^2}\sum\limits_{u\in e\cap V_2;v\in e\cap V_1}z_2(u)z_2(v)$. Clearly $\alpha\le 0$. Since $\lambda_2\le r(v)$, we have $$\sum_{e\in \partial V_1}\frac{\delta_E(e)}{|e|^2}\sum\limits_{u\in e\cap V_2;v\in e\cap V_1}z_2(u)z_2(v)=\sum\limits_{v\in V_1}(r(v)-\lambda_2)\delta_v(v)\ge0.$$ 
Therefore,
\begin{align}\label{l2}
 &\left(4\sum_{e\in E}\frac{\delta_E(e)}{|e|^2}\sum\limits_{\{u,v\}\subset  e\cap V_1}y(u)y(v)+\alpha\right)  
 \notag\\&=\left(2\sum_{e\in E}\frac{\delta_E(e)}{|e|^2}\sum\limits_{\{u,v\}\subset  e\cap V_1}y(u)y(v)+\sum_{e\in \partial V_1}\frac{\delta_E(e)}{|e|^2}\sum\limits_{u\in e\cap V_2;v\in e\cap V_1}z_2(u)z_2(v)\right)\ge 0.
\end{align}
Since $r(v)\ge \lambda_2$, \Cref{l1} and \Cref{l2} imply that \sloppy
\begin{align}\label{c1}
    &\lambda_2(2r_0-\lambda_2)(y,y)_V^2\notag\\&\ge\left(\frac{1}{2}\sum_{e\in E}\frac{\delta_E(e)}{|e|^2}\sum\limits_{u,v\in e}(y(u)-y(v))^2\right)\left(\frac{1}{2}\sum_{e\in E}\frac{\delta_E(e)}{|e|^2}\sum\limits_{u,v\in e}(y(u)+y(v))^2\right).
\end{align}
Now by Cauchy–Schwarz inequality we have
\begin{align}\label{c2}
    &\left(\frac{1}{2}\sum_{e\in E}\frac{\delta_E(e)}{|e|^2}\sum\limits_{u,v\in e}(y(u)-y(v))^2\right)\left(\frac{1}{2}\sum_{e\in E}\frac{\delta_E(e)}{|e|^2}\sum\limits_{u,v\in e}(y(u)+y(v))^2\right)\notag\\&\ge \left(\frac{1}{2}\sum_{e\in E}\frac{\delta_E(e)}{|e|^2}\sum_{u,v\in e}(y^2(u)-y^2(v))\right)^2.
\end{align}
Suppose $t_0<t_1<\ldots<t_k$ are all possible distinct values of $y$ and $V_i=\left\{v\in V:y(v)\ge t_i\right\}$. Clearly $V=V_0\supseteq V_1\supseteq \ldots\supseteq V_k$.
It can be easily verified that \sloppy$\left(\frac{1}{2}\sum_{e\in E}\frac{\delta_E(e)}{|e|^2}\sum_{u,v\in e}(y^2(u)-y^2(v))\right)\ge \frac{\delta_E(\min)}{(rk(G))^2}\sum\limits_{i\in \mathbb{N}_k} |\partial V_i|(t_i^2-t_{i-1}^2)\ge \frac{\delta_E(\min)}{(rk(G))^2}h(G)\sum\limits_{i\in \mathbb{N}_k} | V_i|(t_i^2-t_{i-1}^2)\ge\frac{\delta_E(\min)}{(rk(G))^2}h(G)\sum\limits_{i\in\mathbb{N}_{k}}( | V_i|-|V_{i+1}|)t_i^2 \ge\frac{\delta_E(\min)}{\delta_V(\max)(rk(G))^2}h(G)(y,y)_V$.

Hence \Cref{c1} and \Cref{c2} lead us to $\sqrt{\lambda_2(2r_0-\lambda_2)}\ge \frac{\delta_E(\min)}{\delta_V(\max)(rk(G))^2}h(G)$.
\end{proof}
\begin{thm}
For any hypergraph $G(V,E)$, $4\lambda_N\frac{\delta_V(\max)}{\delta_E(\min)}\ge h(G)$.
\end{thm}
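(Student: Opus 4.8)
The plan is to upper bound $h(G)$ by showing, through the variational (Rayleigh quotient) characterisation of the largest eigenvalue of $-L$, that $\lambda_N$ cannot be too small relative to $h(G)$. Since $-L$ is positive semidefinite and self-adjoint with eigenvalues $0=\lambda_1\le\cdots\le\lambda_N$, we have $\lambda_N=\max_{x\neq\mathbf 0}R(-L,x)=\max_{x\neq\mathbf 0}\frac{(-Lx,x)_V}{(x,x)_V}$, so any single test function $x$ yields the lower bound $\lambda_N\ge\frac{(-Lx,x)_V}{(x,x)_V}$. The whole point is to feed in a test function adapted to the Cheeger-optimal cut.

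First I would fix a set $S\subset V$ attaining the minimum defining $h(G)$ and, replacing $S$ by $V\setminus S$ if necessary, assume $|S|\le|V\setminus S|$, so that $h(G)=\frac{|\partial S|}{|S|}$. I would then test with the indicator function $\chi_S\in\mathbb R^V$ of $S$ (so $\chi_S(v)=1$ for $v\in S$ and $0$ otherwise). Using the edge-boundary expression \Cref{nsde}, only the hyperedges of $\partial S$ contribute, and each such $e$ contributes exactly $\frac{\delta_E(e)}{|e|^2}\,|e\cap S|\,|e\setminus S|$, because $(\chi_S(u)-\chi_S(v))^2$ equals $1$ precisely for the pairs straddling $S$ and $0$ otherwise. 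Hence $(-L\chi_S,\chi_S)_V=\sum_{e\in\partial S}\frac{\delta_E(e)}{|e|^2}|e\cap S||e\setminus S|$, while $(\chi_S,\chi_S)_V=\sum_{v\in S}\delta_V(v)\le\delta_V(\max)|S|$. Combining these, $\lambda_N\ge\frac{1}{\delta_V(\max)|S|}\sum_{e\in\partial S}\frac{\delta_E(e)}{|e|^2}|e\cap S||e\setminus S|$, and it remains to bound the numerator below by $\frac{\delta_E(\min)}{4}|\partial S|$; substituting $h(G)=|\partial S|/|S|$ then gives $\lambda_N\ge\frac{\delta_E(\min)}{4\delta_V(\max)}h(G)$, which is exactly the claimed inequality. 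The factor $\tfrac14$ is transparent in the $2$-uniform case: for $|e|=2$ every crossing edge has $|e\cap S|=|e\setminus S|=1$ and $|e|^2=4$, so $\frac{|e\cap S||e\setminus S|}{|e|^2}=\frac14$ on the nose, and the estimate is an edge-by-edge equality.

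The main obstacle is precisely this last per-hyperedge estimate for genuine hyperedges. For $|e|\ge 3$ one only has $|e\cap S||e\setminus S|\le\frac{|e|^2}{4}$ by AM--GM, so the ratio $\frac{|e\cap S||e\setminus S|}{|e|^2}$ can fall well below $\frac14$ when the optimal cut slices $e$ very unevenly, and a naive edge-by-edge bound would then lose a factor depending on $rk(G)$. The way I expect to close this is to exploit the \emph{minimality} of the Cheeger cut: a hyperedge split very lopsidedly (say with a single vertex on one side) furnishes a competing cut of small boundary and comparable size, which contradicts the optimality of $S$ unless $h(G)$ is already correspondingly small; quantifying this trade-off is what forces the crossing hyperedges of an optimal cut to be balanced enough for the $\frac14$ bound to survive.

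An alternative, more mechanical route worth trying is to deduce the statement directly from \Cref{2n}: for any $S$ one has $\frac{|\partial S|}{\min(|S|,|V\setminus S|)}\le\frac{|\partial S||V|}{|S|(|V|-|S|)}\le\lambda_N\delta_V(\max)\max_e\frac{|e|^2}{(|e|-1)\delta_E(e)}$, and minimising over $S$ bounds $h(G)$. Reconciling the factor $\max_e\frac{|e|^2}{(|e|-1)\delta_E(e)}$ with the clean constant $\frac{4\delta_V(\max)}{\delta_E(\min)}$ is the delicate point either way, and I would cross-check the final constant against the extremal case of a single large hyperedge, where both sides are of the same order, to confirm that the factor $4$ is the correct (and essentially sharp) one.
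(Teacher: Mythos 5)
Your plan is the same as the paper's: lower--bound $\lambda_N$ by the Rayleigh quotient of the indicator $\chi_S$ of a Cheeger-optimal set $S$, compute $(-L\chi_S,\chi_S)_V=\sum_{e\in\partial S}\frac{\delta_E(e)}{|e|^2}|e\cap S||e\setminus S|$ and $(\chi_S,\chi_S)_V\le\delta_V(\max)|S|$, and then try to force the per-hyperedge ratio $\frac{|e\cap S||e\setminus S|}{|e|^2}$ up to $\frac14$. Your diagnosis of the obstacle is exactly right, but the gap is that neither of your proposed repairs can close it. Minimality of the Cheeger cut does \emph{not} force balanced crossings: in the $k$-uniform sunflower with two petals (heart $v_0$, petals $V_1,V_2$, with $\delta_V\equiv 1$, $\delta_E\equiv 1$) the optimal set is a single petal $S=V_1$, which necessarily splits its unique crossing hyperedge as $(k-1)$ against $1$; one computes $R(-L,\chi_S)=\frac{1}{k^2}$ while $\frac14 h(G)=\frac{1}{4(k-1)}$, and $\frac{1}{k^2}<\frac{1}{4(k-1)}$ for every $k\ge 3$. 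Moreover, even a perfectly balanced split of an odd-size hyperedge gives only $\frac14\left(1-\frac{1}{|e|^2}\right)<\frac14$, so no balancedness argument can recover the constant $4$. Your alternative route through \Cref{2n} produces the constant $\max\limits_{e\in E}\frac{|e|^2}{(|e|-1)\delta_E(e)}$, which is never smaller than $\frac{4}{\delta_E(\min)}$ (evaluate at the $\delta_E$-minimizing hyperedge and use $\frac{m^2}{m-1}\ge 4$), with equality essentially only in the graph case; so that route cannot yield the stated constant either. As it stands, your argument proves the theorem only for $2$-uniform hypergraphs, where every crossing edge is split $1$--$1$ and the ratio is exactly $\frac14$.

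You should also know that the step you declined to take is precisely the step the paper takes without justification: its proof asserts $\frac{(-L\chi_S,\chi_S)_V}{(\chi_S,\chi_S)_V}\ge\frac14\frac{\delta_E(\min)}{\delta_V(\max)}h(G)$, and the sunflower example above falsifies this assertion, so the paper's own argument has the same gap and is valid only in the graph case. The statement itself is nevertheless true, by a global argument rather than a single test function. Since the diagonal entry of $-L$ at $v$ is $r(v)$, one has $\mathrm{tr}(-L)=\sum\limits_{v\in V}r(v)=\sum\limits_{e\in E}\frac{\delta_E(e)(|e|-1)}{|e|^2}\sum\limits_{v\in e}\frac{1}{\delta_V(v)}\ge\frac{\delta_E(\min)}{2\delta_V(\max)}|E|$, using $\frac{|e|-1}{|e|}\ge\frac12$; as $-L$ is diagonalizable with nonnegative eigenvalues and $\lambda_1=0$, this gives $\lambda_N\ge\frac{\mathrm{tr}(-L)}{|V|-1}\ge\frac{\delta_E(\min)}{2\delta_V(\max)}\cdot\frac{|E|}{|V|-1}$. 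On the other hand, any $S$ with $|S|=\lfloor|V|/2\rfloor$ shows $h(G)\le\frac{|E|}{\lfloor|V|/2\rfloor}\le\frac{2|E|}{|V|-1}$. Chaining the two bounds yields $4\lambda_N\frac{\delta_V(\max)}{\delta_E(\min)}\ge\frac{2|E|}{|V|-1}\ge h(G)$, which is the theorem; if one insists on the test-function route, the honest conclusion is only the weaker bound with $4$ replaced by $\frac{rk(G)^2}{rk(G)-1}$.
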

\begin{proof}
Suppose $S\subset V$ be such that $h(G)=\frac{|\partial S|}{|S|}$ and $z_N\in\mathbb{R}^V$ be the eigenvector of $L$ corresponding to the largest eigenvalue $\lambda_N$ . If $\chi_S\in \mathbb{R}^V$ be the characteristic function of the set $S$ then $\lambda_N=\frac{(-Lz_N,z_N)_V}{(z_N,z_N)_V}\ge \frac{(-L\chi_S,\chi_S)_V}{(\chi_S,\chi_S)_V}\ge \frac{1}{4}\frac{\delta_E(\min)}{\delta_V(\max)}h(G)$.    
\end{proof}
\section{General Adjacency operator of a hypergraph }\label{adjacency}
 For graphs, Adjacency matrix can be expressed as the difference of the degree matrix and the Laplacian matrix associated with the graph. Here we  define the general adjacency operator $A_G:\mathbb{R}^V\to \mathbb{R}^V$ for a hypergraph $G(V,E)$ as follows
\begin{equation}\label{genadjacency}
((A_G)x)(v):=(L_G(x))(v)+r(v)x(v),
\end{equation}
for all $x\in\mathbb{R}^V$, where $L_G$ is the diffusion operator of $G$ and $r(\in \mathbb{R}^V)$ is defined in the \Cref{gendegree}. By \Cref{gendegree},  for all $x\in\mathbb{R}^V$,
\begin{align*}
    (L_G(x))(v)
    &=\sum\limits_{e\in E_v}\frac{\delta_E(e)}{\delta_V(v)}\frac{1}{|e|^2}\sum\limits_{u\in e}(x(u)-x(v))\\
    &=\sum\limits_{e\in E_v}\frac{\delta_E(e)}{\delta_V(v)}\frac{1}{|e|^2}\left(\sum\limits_{u\in e;u\neq v}x(u)-\sum\limits_{u\in e;u\neq v}x(v)\right)\\
    &=\sum\limits_{e\in E_v}\frac{\delta_E(e)}{\delta_V(v)}\frac{1}{|e|^2}\sum\limits_{u\in e;u\neq v}x(u)-\sum\limits_{e\in E_v}\frac{\delta_E(e)}{\delta_V(v)}\frac{|e|-1}{|e|^2}x(v)\\
    &=\sum\limits_{e\in E_v}\frac{\delta_E(e)}{\delta_V(v)}\frac{1}{|e|^2}\sum\limits_{u\in e;u\neq v}x(u)-r(v)x(v)
\end{align*}
Therefore, by \Cref{L} we have
\begin{equation}\label{A}
    (A_Gx)(v)=\sum\limits_{e\in E_v}\frac{\delta_E(e)}{\delta_V(v)}\frac{1}{|e|^2}\sum\limits_{u\in e;u\neq v}x(u).
\end{equation}
   Henceforth we simply use $A$ to denote the general adjacency operator of a hypergraph instead of $A_G$ (if there is no confusion about the hypergraph $G$).
   
    Now we compute some eigenvalues and their eigenspaces of the general adjacency operators associated with some classes of hypergraphs. 
   \begin{rem}
   For some specific values of $\delta_E$ and $\delta_V$, the diffusion operator coincides with some conventional operators. Similarly, if we choose $\delta_V(v)=1$ for all $v\in V$ and $\delta_E(e)=\frac{|E|^2}{|e|-1}$ for all $e\in E$,  our adjacency operator becomes the adjacency operator described in \cite{MR4208993}. So all the theorems on adjacency operator stated in this subsection are also valid for the adjacency operator in \cite{MR4208993}.
   \end{rem}
  \begin{thm}\label{adj_cute_1}
   Suppose that $G=(V,E)$ be a hypergraph. If $e_0\in E$ such that
   \begin{itemize}
       \item[(i)] $e_0=e_u\cup e_v$, with $e_u\cap e_v=\emptyset$, and  $|e_u|\ge 2$,
       \item[(ii)]\label{einte0}$e\cap e_u=\emptyset$ for all all $e(\neq e_0)\in E$,
       \item[(iii)] $\delta_V(v)=c$ for all $v\in e_u$,
   \end{itemize}
   then $-\frac{\delta_E(e_0)}{c|e_0|^2}$ is an eigenvalue of $A$ with multiplicity $|e_0|-1$.
  \end{thm}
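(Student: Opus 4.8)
The plan is to exploit that, on the single hyperedge $e_0$ equipped with the uniform weight $\delta_V\equiv c$, the general adjacency operator of \eqref{A} acts as a scalar multiple of the ordinary adjacency matrix of the complete graph $K_{|e_0|}$ on the vertices of $e_0$; indeed, for $v\in e_0$ formula \eqref{A} then reads $(Ax)(v)=\frac{\delta_E(e_0)}{c|e_0|^2}\sum_{u\in e_0,\,u\ne v}x(u)$. Since the adjacency matrix of $K_{|e_0|}$ has eigenvalue $-1$ with multiplicity $|e_0|-1$, whose eigenspace is precisely the space of zero-sum functions on $e_0$, scaling suggests that $-\frac{\delta_E(e_0)}{c|e_0|^2}$ is an eigenvalue of $A$ of multiplicity $|e_0|-1$, realised on functions supported on $e_0$ whose values over $e_0$ sum to zero.

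To make this precise I would fix a vertex $w_0\in e_u$, enumerate $e_0=\{w_0,w_1,\dots,w_m\}$ with $m=|e_0|-1$, and for each $i=1,\dots,m$ define $y_i\in\mathbb{R}^V$ by $y_i(w_i)=1$, $y_i(w_0)=-1$, and $y_i(v)=0$ otherwise. Each $y_i$ is supported on $e_0$ and satisfies $\sum_{w\in e_0}y_i(w)=0$, and the family $\{y_1,\dots,y_m\}$ is linearly independent, since evaluating $\sum_i c_i y_i$ at $w_i$ forces $c_i=0$. The assertion to prove is that every $y_i$ is an eigenfunction of $A$ for the eigenvalue $\lambda:=-\frac{\delta_E(e_0)}{c|e_0|^2}$.

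I would then verify $Ay_i=\lambda y_i$ pointwise through \eqref{A}. For $v\in e_u$ the isolation hypothesis (ii) gives $E_v=\{e_0\}$, so with $\delta_V(v)=c$ and $\sum_{w\in e_0}y_i(w)=0$ the single surviving term collapses to $\frac{\delta_E(e_0)}{c|e_0|^2}\bigl(\sum_{w\in e_0}y_i(w)-y_i(v)\bigr)=\lambda y_i(v)$; this is the clean computation already underlying \Cref{cute2}. For $v\notin e_0$ the function $y_i$ vanishes, and since its support lies in $e_0$, formula \eqref{A} produces only terms indexed by hyperedges meeting $e_0$, which by (ii) can meet $\operatorname{supp}y_i$ only inside $e_v$; the zero-sum normalisation then kills these terms, so $(Ay_i)(v)=0=\lambda y_i(v)$.

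The main obstacle is the verification at the vertices of $e_v$, which is the genuinely new content beyond \Cref{cute2}: such a vertex may lie in hyperedges other than $e_0$, so one must show that each extraneous hyperedge contributes nothing to $(Ay_i)(v)$ — this is exactly where hypothesis (ii) and the vanishing of $\sum_{w\in e_0}y_i(w)$ are used together — while the uniform weight $c$ forces the surviving $e_0$-term to equal $\lambda\,y_i(v)$. Once this cancellation is secured at every vertex, $\langle y_1,\dots,y_m\rangle$ is an eigenspace of dimension $|e_0|-1$; and because $A$ is self-adjoint, being the sum of the self-adjoint operator $L$ and the self-adjoint multiplication operator $x\mapsto r\,x$, its algebraic and geometric multiplicities agree, so the multiplicity of $-\frac{\delta_E(e_0)}{c|e_0|^2}$ is $|e_0|-1$, as claimed.
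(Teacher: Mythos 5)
Your verification on $e_u$ is fine, but the extension to all of $e_0$ fails, and the gap is not repairable: the functions $y_i$ whose second support point $w_i$ lies in $e_v$ are in general not eigenfunctions of $A$. Hypothesis (ii) isolates only $e_u$, not $e_v$, so a vertex $w_i\in e_v$ may belong to further hyperedges. If $e'\ne e_0$ is a hyperedge with $w_i\in e'$ and $v\in e'\setminus\{w_i\}$, then $w_0\notin e'$ (since $w_0\in e_u$ and $e'\cap e_u=\emptyset$), so by \eqref{A} the $e'$-contribution to $(Ay_i)(v)$ is $\frac{\delta_E(e')}{\delta_V(v)|e'|^2}\,y_i(w_i)\ne 0$, while $\lambda y_i(v)=0$; the zero-sum normalisation over $e_0$ cannot kill this term, because $e'$ meets the support of $y_i$ only in the single point $w_i$ and never sees the compensating value at $w_0$. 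A second, independent failure occurs at $v=w_i$ itself: there the surviving $e_0$-term is $-\frac{\delta_E(e_0)}{\delta_V(w_i)|e_0|^2}\,y_i(w_i)$, and hypothesis (iii) gives $\delta_V=c$ only on $e_u$, so this scalar need not equal $\lambda$. A minimal counterexample: $V=\{1,2,3,4\}$, $E=\{e_0,e_1\}$ with $e_0=\{1,2,3\}$, $e_u=\{1,2\}$, $e_v=\{3\}$, $e_1=\{3,4\}$, $\delta_V\equiv 1$; your $y$ with $y(1)=-1$, $y(3)=1$ satisfies $(Ay)(4)=\frac{\delta_E(e_1)}{4}\ne 0=\lambda y(4)$.

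What survives is exactly the sub-family of differences taken inside $e_u$, and that is the paper's proof: for $e_u=\{u_0,u_1,\ldots,u_k\}$ it uses $y_i$ supported on $\{u_0,u_i\}\subset e_u$, where every support vertex satisfies $E_v=\{e_0\}$ and $\delta_V(v)=c$, checks the four cases ($v=u_0$, $v=u_i$, $v=u_j$ with $j\ne 0,i$, and $v\notin e_u$) directly from \eqref{A}, and concludes multiplicity at least $|e_u|-1$, mirroring \Cref{cute2}. You took the stated multiplicity $|e_0|-1$ at face value, but that figure is evidently a slip for $|e_u|-1$: the paper's own proof produces only $|e_u|-1$ eigenvectors, and the later application to hyperflowers invokes \Cref{adj_cute_1} with multiplicity $t-1=|e_u|-1$ per hyperedge. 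In the counterexample above the eigenvalue $-\delta_E(e_0)/9$ generically has multiplicity $1=|e_u|-1$ rather than $2=|e_0|-1$, so the literal statement is not provable. The root of the trouble is your opening heuristic: $A$ acts on $e_0$ as a scalar multiple of the adjacency of $K_{|e_0|}$ only if \emph{every} vertex of $e_0$ satisfies $E_v=\{e_0\}$ and $\delta_V(v)=c$, which (ii) and (iii) guarantee only on $e_u$.
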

   \begin{proof}
   Suppose that $e_u=\{u_0,u_1,\ldots,u_k\}$. Corresponding to each $u_i$, for $i=1,\ldots,k$, we define $y_i\in \mathbb{R}^V$ as 
   \[
   y_i(v)=
   \begin{cases}
   1&\text{~if~} v=u_0,\\
   -1&\text{~if~}v=u_i,\\
   0 &\text{~otherwise.~}
   \end{cases}
   \]
   So $(Ay_i)(v)=\sum\limits_{e\in E_v}\frac{\delta_E(e)}{\delta_V(v)}\frac{1}{|e|^2}\sum\limits_{u\in e;u\neq v}y_i(u)$. Now we have the following observations.
   \begin{itemize}
       \item[(a)] For $v=u_0$, one has $E_v=\{e_0\}$. Therefore, $(Ay_i)(u_0)=\frac{\delta_E(e_0)}{\delta_V(u_0)}\frac{1}{|e_0|^2}\sum\limits_{u\in e;u\neq u_0}y_i(u)$. Since $y_i(v)=0$ for all $v\in V$ with $v\neq u_0$ and $v\neq u_i$, evidently, $(Ay_i)(u_0)=\frac{\delta_E(e_0)}{\delta_V(u_0)}\frac{1}{|e_0|^2}y_i(u_i)=-\frac{\delta_E(e_0)}{c}\frac{1}{|e_0|^2}y_i(u_0)$. 
       \item[(b)] Similarly, for any $i=1,2\ldots,k$, we have $(Ay_i)(u_i)=\frac{\delta_E(e_0)}{\delta_V(u_0)}\frac{1}{|e_0|^2}y_i(u_0)=-\frac{\delta_E(e_0)}{c}\frac{1}{|e_0|^2}y_i(u_i)$.
       \item[(c)] For all $j\neq i $ and $j\neq0$,  evidently, $\sum\limits_{u\in e;u\neq u_j}y_i(u)=0$ for all $e\in E_{u_j}$. Therefore, $(Ay_i)(u_j)=0$ for $j\neq 0$ and $j\neq i$.
       \item[(d)] For all $v\notin e_u$, 
       $\sum\limits_{u\in e;u\neq v}y_i(u)=0$ for all $e\in E$ and therefore, $(Ay_i)(v)=0$.
   \end{itemize}
   Therefore, $Ay_i=-\frac{\delta_E(e_0)}{c}\frac{1}{|e_0|^2}y_i$ for all $i=1,2,\ldots ,k$. Evidently, $\{y_i\}_{i=1}^k$ is a linearly independent set.
   The rest of the proof is similar with the proof of
  \Cref{cute2}. 
   \end{proof}

   \begin{thm}\label{adj_cute_2}
   Suppose that $G=(V,E)$ is a hypergraph. If
   \begin{itemize}
       \item[(i)]\label{con1} there exists $E_0=\{e_0,e_1,\ldots,e_s\}\subset E$ such that $W=\bigcap\limits_{i=1}^se_i$ and $e\cap W=\emptyset$ for all $e\in E\setminus E_0$,
       \item[(ii)] $|W|\ge 2$ and $W=\{v_0,v_1,\ldots ,v_k\}$,
       \item[(iii)] $\delta_V(v)=c$ for all $v\in W$, and  $\sum\limits_{e\in E_0}\frac{\delta_E(e)}{c}\frac{1}{|e|^2}=\nu$.
   \end{itemize}
   then $-\nu$ is an eigenvalue of $A$ with multiplicity $|W|-1$.
   \end{thm}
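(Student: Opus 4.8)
The plan is to mirror the eigenvector construction used for the diffusion operator in \Cref{cute-new} and for the adjacency operator in \Cref{adj_cute_1}, exploiting the fact that the adjacency operator $A$ carries no diagonal correction term, so the computation is even cleaner. Writing $W=\{v_0,v_1,\ldots,v_k\}$ (so that $|W|=k+1$ and $k=|W|-1$), I would define, for each $i=1,2,\ldots,k$, the function $y_i\in\mathbb{R}^V$ supported on $\{v_0,v_i\}$ by $y_i(v_0)=-1$, $y_i(v_i)=1$, and $y_i(v)=0$ otherwise. The aim is to show that each $y_i$ is an eigenvector of $A$ for the eigenvalue $-\nu$ and that the $y_i$ are linearly independent, yielding a $k$-dimensional eigenspace.

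The key observation is that $\sum_{u\in e}y_i(u)=0$ for every $e\in E$. Indeed, if $e\notin E_0$ then by condition (i) $e\cap W=\emptyset$, so $y_i$ vanishes on all of $e$; and if $e\in E_0$ then $W\subseteq e$, so $e$ contains both $v_0$ and $v_i$ and the two nonzero contributions $-1$ and $+1$ cancel. Feeding this into \Cref{A} and using $\sum_{u\in e,\,u\ne v}y_i(u)=\bigl(\sum_{u\in e}y_i(u)\bigr)-y_i(v)=-y_i(v)$, I obtain the compact identity
\[
(Ay_i)(v)=-\,y_i(v)\sum_{e\in E_v}\frac{\delta_E(e)}{\delta_V(v)}\frac{1}{|e|^2},
\]
valid for all $v\in V$ simultaneously.

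It then remains to evaluate the coefficient on and off $W$. For $v\in W$, condition (i) forces $E_v=E_0$, since every hyperedge of $E_0$ contains $W$ while no hyperedge of $E\setminus E_0$ meets $W$; with $\delta_V(v)=c$ from condition (iii) the sum is exactly $\sum_{e\in E_0}\frac{\delta_E(e)}{c}\frac{1}{|e|^2}=\nu$, so $(Ay_i)(v)=-\nu\,y_i(v)$. For $v\notin W$ one has $y_i(v)=0$, and the displayed identity gives $0=-\nu\,y_i(v)$ trivially. Hence $Ay_i=-\nu\,y_i$ for every $i$, so $-\nu$ is an eigenvalue. Linear independence of $\{y_1,\ldots,y_k\}$ follows by evaluating $\sum_i c_iy_i$ at $v_j$ for $j\ge1$, which returns $c_j$; thus $\sum_i c_iy_i=0$ forces every $c_j=0$, and the eigenspace has dimension at least $k=|W|-1$.

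I do not expect a genuine obstacle in the construction itself, which is short and purely computational once the vanishing sum is isolated. The one point requiring care is the reading of condition (i): for the cancellation within $E_0$ and for the identity $E_v=E_0$ one needs $W$ to be contained in \emph{every} hyperedge of $E_0$ (the natural intersection interpretation consistent with \Cref{cute-new} and \Cref{cute3}), together with the hypothesis that no hyperedge of $E\setminus E_0$ touches $W$. A second, more delicate matter is that this construction directly yields only the lower bound $\dim\ge|W|-1$; to match the stated exact multiplicity one would additionally have to rule out further independent eigenvectors for $-\nu$ supported outside the structure $E_0$, but the $y_i$ produced here already realize the claimed dimension.
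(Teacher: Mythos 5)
Your proof is correct and follows essentially the same route as the paper: the identical eigenvectors $y_i$ supported on $\{v_0,v_i\}$, the observation that $E_v=E_0$ and $\delta_V(v)=c$ on $W$, and the same linear-independence check; your vanishing-sum identity $\sum_{u\in e}y_i(u)=0$ merely packages the paper's case-by-case evaluation of $(Ay_i)(v)$ into one line. Your two caveats are also consistent with the paper: it, too, reads condition (i) as $W=\bigcap_{e\in E_0}e$ (so that $W\subseteq e_0$ as well), and its proof likewise establishes only the lower bound ``multiplicity at least $|W|-1$'' despite the statement's wording.
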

   \begin{proof}
   For each $i=1,2,\ldots,k$ we define a function $y_i\in \mathbb{R}^V$ as 
    \[
   y_i(v)=
   \begin{cases}
   1 &\text{~if~} v=v_0,\\
   -1&\text{~if~} v=v_i,\\
   0 &\text{~otherwise.~}
   \end{cases}
   \]
    By \Cref{A} we have $(Ay_i)(v)=\sum\limits_{e\in E_v}\frac{\delta_E(e)}{\delta_V(v)}\frac{1}{|e|^2}\sum\limits_{u\in e;u\neq v}y_i(u)$.
   Considering different cases we have the following facts.
   \begin{enumerate}
       \item[(a)] From condition(i) of the theorem we have $E_v=E_0 $  for all $v\in W$. Therefore, $(Ay_i)(v_i)
       =\sum\limits_{e\in E_0}\frac{\delta_E(e_i)}{\delta_V(u_i)}\frac{1}{|e_i|^2}y_i(v_0)=-\sum\limits_{e\in E_0}\frac{\delta_E(e_i)}{\delta_V(u_i)}\frac{1}{|e_i|^2}y_i(u_i)
       $ $=-\nu y_i(v_i)$
          and \\
       $(Ay_i)(v_0)=\sum\limits_{e\in E_0}\frac{\delta_E(e)}{\delta_V(v_0)}\frac{1}{|e|^2}\sum\limits_{u\in e;u\neq v_0}y_i(u)=\sum\limits_{e\in E_0}\frac{\delta_E(e_i)}{c}\frac{1}{|e_i|^2}y_i(v_i)=-\sum\limits_{e\in E_0}\frac{\delta_E(e_i)}{c}\frac{1}{|e_i|^2}y_i(u_i)=-\nu y_i(v_0)$.
       \item[(b)] For $i \ne j\ne 0$, we have $\sum\limits_{u\in e;u\ne v_j}y_i(u)=0$ for all $e\in E$ and thus $(Ay_i)(v_j)=0 $.
       \item[(c)] Note that $y_i(v)=0$ for all $v\in V\setminus W$ and for any $e\in E$, either both $v_0,v_i$ belongs to $e$ or none of them belongs to $e$. Therefore, $\sum\limits_{u\in e;u\ne v_j}y_i(u)=0$ for all $e\in E$ and this leads us to $(Ay_i)(v)=0 $ for all $v\notin W$.
   \end{enumerate}
   It is clear  that $-\nu$ is an eigenvalue of $A$. Since, $\{y_i\}_{i=1}^k$ is a linearly independent set, the rest of the proof is similarly as  in the proof of \Cref{cute2}. 
   \end{proof}
   \begin{thm}\label{adj_cute3}
Suppose that $G=(V,E)$ be a hypergraph with $E_0=\{e_0,e_1,\ldots,e_k\}\subset E$ such that 
\begin{itemize}
    \item [(1)] $W=
    \bigcap\limits_{i=0}^ke_i\neq \emptyset$, and $W\cap e=\emptyset$ for all $e\in E\setminus E_0$,
    \item[(2)]for all $i=0,1,\ldots,k$ there exists an $F_i\subset V$ such that $e_i=W\cup F_i$ with $|F_i|=t$ and $F_i\cap W=\emptyset$ for all $i$,
    \item[(3)] $F_i\cap e=\emptyset$ for all $e(\ne e_i)\in E$,
    \item[(4)]there exists $c,\omega\in \mathbb{R}$ such that $\delta_V(v)=c$ for all $v\in \bigcup\limits_{i=0}^kF_i$ and $\frac{\delta_E(e)}{|e|^2}=\omega$ for all $e\in E_0$.
\end{itemize}
 then $\frac{\omega}{c}(t-1)$ is an eigenvalue of $A$ with multiplicity at least $|E_0|-1$.
\end{thm}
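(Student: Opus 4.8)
The plan is to reuse the very test functions that proved \Cref{cute3}, since the structural hypotheses (1)--(4) here are identical to those of \Cref{cute3} and only the operator has changed from $L$ to $A$. For each $i=1,2,\ldots,k$ I would set
\[
y_i(v)=\begin{cases}-1 & v\in F_0,\\ \phantom{-}1 & v\in F_i,\\ \phantom{-}0 & \text{otherwise,}\end{cases}
\]
which is supported entirely on $\bigcup_{j=0}^k F_j$. The cleanest route then exploits the identity \eqref{genadjacency}, namely $A=L+\mathrm{diag}(r)$ with $r$ as in \Cref{gendegree}: because each $y_i$ vanishes off $\bigcup_j F_j$, I only need to understand $r$ on that set.

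First I would compute $r$ on $\bigcup_j F_j$. For $v\in F_j$, hypothesis (3) forces $E_v=\{e_j\}$, so using $\delta_V(v)=c$, the uniformity $\delta_E(e_j)/|e_j|^2=\omega$, and $|e_j|=|W|+t$, one obtains the constant value $r(v)=\tfrac{\omega}{c}(|W|+t-1)$, independent of $j$ and $v$. Hence $\mathrm{diag}(r)\,y_i=\tfrac{\omega}{c}(|W|+t-1)\,y_i$. Since \Cref{cute3} already supplies $Ly_i=-\tfrac{\omega}{c}|W|\,y_i$, adding the two contributions yields
\[
Ay_i=\Big(-\tfrac{\omega}{c}|W|+\tfrac{\omega}{c}(|W|+t-1)\Big)y_i=\tfrac{\omega}{c}(t-1)\,y_i,
\]
which is exactly the claimed eigenvalue.

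If a self-contained verification straight from \eqref{A} is preferred over invoking \Cref{cute3}, I would split into the cases $v\in F_0$, $v\in F_i$, $v\in F_j$ with $j\neq 0,i$, $v\in W$, and $v\in V\setminus(W\cup\bigcup_j F_j)$. The generic cases are routine: off $W\cup\bigcup_j F_j$ the hyperedges through $v$ meet $\mathrm{supp}(y_i)$ in nothing by conditions (1)--(3), so the inner sum vanishes, and the same happens for $v\in F_j$ with $j\neq 0,i$. The two active cases $v\in F_0$ and $v\in F_i$ each give $\sum_{u\in e_j,\,u\neq v}y_i(u)=(t-1)\,y_i(v)$, the factor $t-1$ coming from the companion vertices of $v$ inside $F_j$, and produce the eigenvalue factor $\tfrac{\omega}{c}(t-1)$.

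The step I expect to be the genuine obstacle is the case $v\in W$. Here $E_v=E_0$, so $(Ay_i)(v)$ collects a contribution from every hyperedge of $E_0$: the terms from $e_0$ and $e_i$ are $-t\,\omega/\delta_V(v)$ and $+t\,\omega/\delta_V(v)$ respectively, while all other hyperedges contribute $0$. These must cancel so that $(Ay_i)(v)=0=\tfrac{\omega}{c}(t-1)\,y_i(v)$ (recall $y_i\equiv 0$ on $W$), and this cancellation is precisely what the \emph{uniform} weight condition $\delta_E(e)/|e|^2=\omega$ on $E_0$ in hypothesis (4) secures; without it the two terms would carry different factors and need not cancel. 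Note that $\delta_V$ need not be constant on $W$ for this, as the common factor $\omega/\delta_V(v)$ multiplies a quantity that already vanishes. Finally, linear independence of $\{y_1,\ldots,y_k\}$ follows verbatim as in \Cref{cute3} by reading off the values on the pairwise disjoint blocks $F_i$, so the eigenspace has dimension at least $k=|E_0|-1$.
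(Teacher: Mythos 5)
Your proof is correct, but your primary route is genuinely different from the paper's. The paper proves \Cref{adj_cute3} by a direct case analysis from \eqref{A} using the same test functions $y_i$, splitting into $v\in F_j$, $v\in W$, and the remaining vertices --- essentially the ``self-contained verification'' you sketch as a fallback. Your main argument instead exploits the operator identity $A=L+\mathrm{diag}(r)$ built into \eqref{genadjacency}: since hypotheses (1)--(4) coincide with those of \Cref{cute3}, its proof already yields $Ly_i=-\frac{\omega}{c}|W|\,y_i$, and the observation that $r\equiv\frac{\omega}{c}(|W|+t-1)$ on the support of $y_i$ (because $E_v=\{e_j\}$ and $|e_j|=|W|+t$ there) turns the adjacency eigenvalue into a two-line consequence. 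This buys economy and makes transparent why the diffusion and adjacency eigenvalues differ by exactly the value of $r$ at the peripheral vertices; the paper's computation buys self-containedness --- note that the \emph{statement} of \Cref{cute3} only asserts an eigenvalue with a multiplicity bound, not that these particular $y_i$ are eigenvectors, so your argument does rely on citing the \emph{proof} of \Cref{cute3}, which is legitimate but worth flagging. Two further points in your favour: your count in the case $v\in W$ is the correct one --- the contributions from $e_0$ and $e_i$ are $\mp t\,\omega/\delta_V(v)$, whereas the paper's display writes $(|F_0|-1)(-1)$ and $(|F_i|-1)(1)$, a harmless miscount since the two terms cancel either way --- and your remark that $\delta_V$ need not be constant on $W$ is accurate; the paper's step $\frac{\omega}{c}(1-1)=0$ silently uses $\delta_V=c$ on $W$, which hypothesis (4) does not grant, but the conclusion is unaffected precisely because the bracket vanishes.
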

\begin{proof}
We define $y_i\in \mathbb{R}^V$ for all $i=1,2,\ldots,k$, as
$$
y_i(v)=
\begin{cases}
-1&\text{~if~} v\in F_0\\
\phantom{-}1&\text{~if~}v\in F_i\\
\phantom{-}0&\text{~otherwise.~}
\end{cases}
$$
 By \Cref{A} we have $(Ay_i)(v)=\sum\limits_{e\in E_v}\frac{\delta_E(e)}{\delta_V(v)}\frac{1}{|e|^2}\sum\limits_{u\in e;u\neq v}y_i(u)$. Now we consider the following cases to prove the result.
\begin{itemize}
    \item[(a)]Since $E_v=\{e_j\}$, for $v\in F_j$,  $(Ay_i)(v)=\frac{\delta_E(e_j)}{\delta_V(v)}\frac{1}{|e_j|^2}\sum\limits_{u\in e;u\neq v}y_i(u)=\frac{\delta_E(e_j)}{\delta_V(v)}\frac{1}{|e_j|^2}(|F_j|-1)y_i(v)$. 
    \item[(b)] Since $E_v=E_0$ for all $v\in W$,  $(Ay_i)(v)=\sum\limits_{e\in E_0}\frac{\delta_E(e)}{\delta_V(v)}\frac{1}{|e|^2}\sum\limits_{u\in e;u\neq v}y_i(u)=\frac{\delta_E(e_0)}{\delta_V(v)}\frac{1}{|e_0|^2}(|F_0|-1)(-1)+\frac{\delta_E(e_i)}{\delta_V(v)}\frac{1}{|e_i|^2}(|F_i|-1)(1)=\frac{\omega}{c}(1-1)=0$.
    \item[(c)]For any $v\in V\setminus (W\cup(\bigcup\limits_{i=0}^kF_i))$, we have $\sum\limits_{u\in e;u\neq v}y_i(u)=0$ for all $e\in E_v$. Therefore, $(Ly_i)(v)=0.$ 
\end{itemize}
Thus $\frac{\omega}{c}(t-1)$ is an eigenvalue of $A$. Since $\{y_i\}_{i=1}^k$ are linearly independent, the multiplicity of the eigenvalue is at least $k=|E_0|-1$. 
\end{proof}


\subsection{Complete Adjacency Spectra of  Hyperflowers}
 Here we compute the complete list of eigenvalues of the adjacency operator associated with the $(l,1)$-hyperflower $G=(V,E)$ with $t$-twins. 
Suppose that for some $\gamma\in\mathbb{R} $, the function  $y_c\in\mathbb{R}^V$ is defined by
    $$ y_{\gamma}(v)=
    \begin{cases}
    \gamma &\text{~if~} v\in W,\\
    1 &\text{~if~} v\in U,
    \end{cases}
    $$where, $V=U\cup W$ is the partition of the set of vertices, as described in \Cref{hyperflower}. If $\frac{\delta_E(e)}{\delta_V(v)|e|^2}=\alpha$ for all $v\in V$ and for all $e\in E$ then
      $$ (A y_c)(v)=
    \begin{cases}
    l\alpha(\gamma(|W|-1)+t) &\text{~if~} v\in W,\\
    \alpha (|W|\gamma+(t-1) ) &\text{~if~} v\in U.
    \end{cases}
    $$
    Therefore, if $\gamma$ is a root of 
    \begin{equation}\label{hyperflower_last}
        |W|x^2+(t+l-l|W|-1)x-lt=0
    \end{equation}
    then $ y_\gamma$ is an eigenvector of $A$ with eigenvalue $\alpha (|W|\gamma+(t-1) )$. The two roots of \Cref{hyperflower_last} is going to give us two eigenvalues of $A$.
    Now by \Cref{adj_cute_1}, If $\alpha=\frac{\delta_E(e)}{\delta_V(v)|e|^2}$ for all $v\in V$ and for all $e\in E$ then corresponding to  $t$ twins of each hyperedge $e\in E$, $-\alpha$ becomes eigenvalue of $A$ with multiplicity at least $t-1$, where $\delta_V(v)=c$, for all $v\in e$. Evidently, for $l$ hyperedges, there are total $l(t-1)$ eigenvalues at least.
    If $\delta_V(v)=c$ for all $v\in U$ and $\delta_E(e)=\frac{\delta_E(e)}{|e|^2}=\mu$ for all $e\in E$ then  \Cref{adj_cute3} implies that $\frac{\mu}{c}
    (t-1)$ becomes an eigenvalue of $A$ with multiplicity $(l-1)$.
    Similarly, if $\delta_V(v)=c$ for all $v\in V$ and $\sum\limits_{e\in E}\frac{\delta_E(e)}{c|e|^2}=\nu$,  \Cref{adj_cute_2} concludes that $-\nu$ is an eigenvalue of $A$ with multiplicity $|W|-1$.
     Since, $(2+l(t-1)+(l-1)+|W|-1)=|V|$, thus we have the complete list of eigenvalues of $A$.
     Evidently, if $\frac{\delta_E(e)}{\delta_V(v)|e|^2}=\alpha$ for all $v\in V$, and for all $e\in E$ then  we have the determinant of $A_G$,
\begin{align*}
    \det(A_G)&=(-1)^{|V|-l-1}\left[(t-1)^2-|W|lt-(t-1)(t-1-|W|l+l)\right]\alpha^{|V|}(t-1)^{(l-1)}|E|^{|W|-1}\\
    &=(-1)^{|V|-l-1}l[1-(t+w)]\alpha^{|V|}(t-1)^{(l-1)}|E|^{|W|-1}.
\end{align*}
Note that, if $\alpha$ is an integer then $\det(A)$ is always an integer. For example, if we consider $\delta_V(v)=1$ and  $\delta_{E(e)}=|e|^2$ which implies the determinant of the adjacency matrix considered in \cite{rodriguez2003Laplacian, rodriguez2009Laplacian} is integer.

   Now we discuss some results, involving the the adjacency operator $A$. 
   \begin{rem}
\label{adjresult}   
  
    \begin{itemize}
        \item[(1)] Clearly, $r(v)=c$(constant) for all $v\in V$ then $\mathbf{1}$ is an eigenvector of $A$ with eigenvalue $c$.
        \item[(2)] For any $x,y\in\mathbb{R}^V$,  
        \begin{equation}\label{Axy}
            (Ax,y)_V= \sum\limits_
        {\begin{subarray}{1}
        {u,v\in V;}\\
        {u\neq v}
        \end{subarray}}x(u)y(v)\sum\limits_{e\in E_u\cap E_v}\frac{\delta_E(e)}{|e|^2}. \end{equation}
         Thus from  \Cref{Axy} we have $(Ax,y)_V=(Ay,x)_V=(x,Ay)_V$. So $A$ is a self-adjoint operator.
       
        \item[(3)] Corresponding to each $v\in V$, we define $\chi_v\in\mathbb{R}^V$ as 
        $$\chi_v(u)=
        \begin{cases}
        1&\text{~if~} u=v\\
        0&\text{~otherwise.~}
        \end{cases} $$
        Thus,
         $(A\chi_u,\chi_v)_V=\sum\limits_{e
        \in E_v\cap E_v}\frac{\delta_E(e)}{|e|^2}$. Therefore, the operator $A$ induces a matrix $B=\left(B_{uv}\right)_{u,v\in V}$ of order $|V|$ defined by 
        $$B_{uv}:=
        \begin{cases}
        (A\chi_u,\chi_v)_V=\sum\limits_{e
        \in E_v\cap E_v}\frac{\delta_E(e)}{|e|^2} &\text{~if~} u\neq v,\\
        0& \text{~otherwise.}
        \end{cases}$$
        \item[(4)] Since $A$ is self-adjoint, $B$ is symmetric matrix. Now form \Cref{A} we have $  (A_Gx)(v)
        =\sum\limits_{u\in V}\frac{1}{\delta_V(v)}\sum\limits_{e
        \in E_v\cap E_v}\frac{\delta_E(e)}{|e|^2}x(u)=\sum\limits_{u\in V}\frac{1}{\delta_V(v)}b_{vu}x(u)=\frac{1}{\delta_V(v)}(Bx)(v)$. Thus for the pre-assigned inner product $(\cdot,\cdot)_V$ on $\mathbb{R}$, the matrix $B$ can be directly deduced from the general adjacency operator $A$. From now onward we refer $B_G$ (or simply $B$) as the induced adjacency matrix associated withthe hypergraph $G$.  
         \item[(5)]\begin{itemize}
             \item[(a)] If $ \delta_E(e)=|e|^2$, then the matrix $B$ becomes the adjacency matrix of hypergraph given in \cite{rodriguez2003Laplacian,rodriguez2009Laplacian}.
             \item [(b)] If $ \delta_E(e)=\frac{|e|^2}{|e|-1}$, then the matrix $B$ coincides with the concept of adjacency matrix of hypergraph introduced in \cite{MR4208993}.
         \end{itemize} This fact motivates us to  incorporate the techniques used in \cite{MR4208993} on the matrix $B$.
         \item[(6)] Suppose that $\mathfrak{P}^n_{uv}$ is the set of all path of length $n$ connecting $u,v\in V$. For all \sloppy $p=ue_{1}v_{1}e_{2}\ldots e_{n}v\in \mathfrak{P}^n_{uv}$, we define $\mathfrak{E}(p):=\prod\limits_{i=1}^n\frac{\delta_E(e_i)}{|e_i|^2}$. Thus the $uv$-th entry of the matrix $B^n$ is $ B^n_{uv}=\sum\limits_{p\in \mathfrak{P}^n_{uv}}\mathfrak{E}(p)>0$ if and only if there exists a path $p\in \mathfrak{P}^n_{uv}$.
         \item[(7)]  The matrix $B$ induces an $1, 0$-matrix $B_0$ defined by ${B_0}_{uv}=0$ if $B_{uv}=0$ and otherwise ${B_0}_{uv}=1$. So $B_0$ is  the adjacency matrix of an unweighted graph $G_0=(V,E_0)$ defined by, for $u,v\in V$ with $u\neq v$, there exists an edge $\{u,v\}\in E_0$ if and only if there exists at least one hyperedge $e\in E$ such that $u,v\in e$. The hypergraph $G$ and the the graph $G_0$ have similar properties like, connectivity, graph colouring, etc. Moreover, if we impose an weight $ w_0:E_0\to\mathbb{R}$ on $G_0$, where $w_0$ is defined by $w_0(\{u,v\})=B_{uv}$, then the adjacency matrix $B$ of the hypergraph $G$ is also the adjacency matrix of the weighted graph $G_w=(V,E_0,w_0)$.
         \item [(8)] If there exists $u,v\in V$ such that the distance $d(u,v)=l$ then the $(u,v)$-th entry of the matrix $B^l$ is non-zero whereas the same for $I,B,B^2,\ldots,B^{l-1}$ are zero. Thus $I,B,B^2,\ldots,B^l$ are linearly independent. Similarly, if $diam(G)=k$, then $I,B,B^2,\ldots,B^k$ are linearly independent. If there exists $r$ distinct eigenvalues of $B$ then the degree of the minimal polynomial of $B$ is $r$. Thus there exists $c_0,c_1,\ldots, c_r\in \mathbb{R}$, not all zero, such that $c
         _0I+c_1B+c_2B^2+\ldots+c_rB^r=0$. Thus  $I,B,B^2,\ldots,B^r$ are linearly dependent and which implies $k \le r$, i.e.,
         the diameter of the hypergraph $G$ is less than the number of distinct eigenvalues of $B$. 
         \item[(9)] Since $B$ is a symmetric matrix, the result in \cite[Theorem 2.2]{MR4208993} can be restated as follows. 
         For a connected hypergraph $G(V,E)$ with $n$ vertices and minimum edge carnality 3, the diameter of $G$  $ diam(G) \le \bigg\lfloor 1 + \frac{\log((1-\alpha^2)/\alpha^2)}{\log(\lambda_{max}/\omega)} \bigg\rfloor,$
 where $\omega$  is the second largest eigenvalue (in absolute value) of $B$, $\lambda_{max}$ is the largest eigenvalue of  $B$ with the unit eigenvector $X_1=((X_1)_1, (X_1)_2,\dots, (X_1)_n)^t$, and $\alpha= \min_i \{(X_1)_i$\}.
         
    \end{itemize}
     \end{rem}

     \section{Normalized Laplacian operator}\label{nor-lap} In \Cref{gen},  we have mentioned that many conventional concepts of graph and hypergraph Laplacians, respectively, are actually special cases of the generalized Laplacian operator $\mathfrak{L}$. However, this generalized Laplacian fails to 
     represent some symmetrically normalized Laplacians, for example, the normalized Laplacian of hypergraphs  in \cite[section-4, Equation-16]{MR4208993} and the Laplacian given in \cite[section-1.2]{chung1997spectral}. In this section, we  introduce and study a general normalized Laplacian $\Tilde{\mathfrak{L}}$ for hypergraphs. Suppose that $\gamma:\mathbb{R}^V\to\mathbb{R}^V$ is an operator defined by $(\gamma(x))(v)=(r(v))^{-\frac{1}{2}}x(v)$. We define the general normalized Laplacian operator $\Tilde{\mathfrak{L}}:\mathbb{R}^V\to\mathbb{R}^V$ as $$\Tilde{\mathfrak{L}}=\gamma\circ\mathfrak{L}\circ \gamma. $$ Now we have some observations on $\Tilde{\mathfrak{L}}$.
     \begin{enumerate}
         \item For all $x\in \mathbb{R}^V$ and $v\in V$ we have, $(\Tilde{\mathfrak{L}}(x))(v)=x(v)-\sum\limits_{e\in E_v}\frac{\delta_E(e)}{\delta_V(v)}\frac{1}{|e|^2}\sum\limits_{u\in e;u\neq v}(r(u)r(v))^{-\frac{1}{2}}x(u)$.
         \item Evidently, $0$ is an eigenvalue of $\Tilde{\mathfrak{L}} $ and the dimension of eigenspace of $0$ is the number of connected components in the hypergraph. The function $\gamma(\mathbf{1})\in \mathbb{R}^V$, is an eigenvector, belongs to the eigenspace of $0$.
         \item Since $(\Tilde{\mathfrak{L}}x,x)_V=\sum_{e\in E}\frac{\delta_E(e)}{|e|^2}\sum\limits_{\{u,v\}\subset E}(\gamma(x)(u)-\gamma(x)(v))^2$, the operator $\Tilde{\mathfrak{L}}$ is positive semidefinite. Therefore, we have
         \begin{equation}\label{normalquard}
             (\Tilde{\mathfrak{L}}x,x)_V\le 2(x,x)_V.
         \end{equation}
         \item For $\delta_E(e)=\frac{|e|^2}{|e|-1}$ and $\delta_V(v)=1$, the operator $ \Tilde{\mathfrak{L}}$ becomes the normalized Laplacian operator described in  \cite[Section-4, Equation-15]{MR4208993}. If the hypergraph is a graph, $\Tilde{\mathfrak{L}} $ becomes the Laplacian given in \cite[Section-1.2]{chung1997spectral}.
         \item Consider the matrix $M=(M_{uv})_{u,v\in V}$ defined by $$M_{uv}=
         \sum_{e\in E_u\cap E_v}\frac{\delta_E(e)}{\delta_V(v)}\frac{|e|-1}{|e|^2} (r(u) r(v))^{-\frac{1}{2}}.
         $$ Evidently, $\Tilde{\mathfrak{L}}(x) =(I_{|V|}-M)(x) $. Therefore, if $\mu_1\le\mu_2\le\ldots\le\mu_{|V|}$ are the eigenvalues of $ \Tilde{\mathfrak{L}}$ then  the following holds.
         \begin{enumerate}
             \item If the hypergraph $G$ has no isolated vertex, then  $\sum\limits_{i=1}^{|V|}\mu_i=|V|$,
             \item Since $\mu_1=0$, we have $\mu_2\le\frac{|V|}{|V|-1}\le \mu_{|V|}$,
             \item \Cref{normalquard} leads us to
             $\mu_i\le 2$ for all $i=1,2,\ldots, |V|$.
          \end{enumerate}
     \end{enumerate}
\section{Applications} \label{app} Now we focus on the applications of the connectivity operators introduced in this work. In this section we study some application of our work in some conventional abstract classes of hypergraphs and some real-world situations.
 Use of the different Laplacian matrices associated with graphs in discrete dynamical network, diffusion, synchronization, random walk, image processing are common in literature, see  \cite{MR4079051,MR3730470,banerjee2020synchronization,MR1076116,MR1877614,wobrock2019image} and references therein. However,
 use of a hypergraph in place of
  the underlying graph may lead to better result sometimes. Instead of the conventional graph topology, some real-world networks need multi-body framework for better explanation. Indeed, incorporating hypergraph in proper way can accomplish the need of multi-body framework in many real-world phenomena. 

\subsection{Spectra of the Power of a Graph} Suppose that $G(V,E)$ is a graph, i.e., a $2$-uniform hypergraph. For any $k(\ge 3)\in \mathbb{N}$, the $k$-th power of $G$, denoted by $G^k=(U, F)$ is a $k$-uniform hypergraph, defined by $$U=V\cup \{\bigcup_{e\in E}W^k_e\} \text{~where,~}W^k_e=\{v_{ei}:i\in\mathbb{N},i\le k-2\}, \text{~and~} F=\{e^{(k)}=e\cup W^k_e:e\in E\}.$$
(See \cite[Definition 2.4]{MR3116407} for more details about the power of a graph).
In a graph, a vertex $v$ is said to be a pendant vertex if $|E_v|=1$. Suppose that $e$ is an edge of the graph $G$. Since $f\cap W^K_e=\emptyset$ for all $f(\ne e^{(k)})\in F$, one can use \Cref{cute2} and \Cref{adj_cute_1} to determine eigenvalues of the Laplacian operator and adjacency operator of $G^k$. Thus, we have the following result.
\begin{prop}
Suppose that $G(V,E)$ is a graph ($2$-uniform hypergraph). For all $k\ge 4$ and $e\in E$, If $\delta_V(v)=c_e$ for all $v\in W_e^k$ then the eigenvalues of the Laplacian and adjacency matrix of $G^k$ are given below.
\begin{enumerate}
    \item  $\frac{\delta_E(e)}{c_e}\frac{1}{k}$ is an eigenvalue of the Laplacian operator associated to $G^k$ with multiplicity $k-3$.
   \item $-\frac{\delta_E(e^{(k)})}{c_ek^2}$ is an eigenvalue of the general adjacency matrix of multiplicity $k-3$.
\end{enumerate}
If $e(\in E)$ contains a pendant vertex then instead of $k-3$, in the above two cases, the multiplicity becomes $k-2$.
\end{prop}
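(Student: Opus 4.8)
The plan is to reduce this proposition to two results that are already proved in the excerpt, namely \Cref{cute2} for the Laplacian/diffusion operator and \Cref{adj_cute_1} for the adjacency operator, by verifying that the $k$-th power $G^k$ fits their hypotheses for each fixed edge $e\in E$. First I would fix an edge $e=\{a,b\}\in E$ and examine its associated hyperedge $e^{(k)}=e\cup W_e^k$ in $G^k$, where $W_e^k=\{v_{ei}:i\le k-2\}$ consists of the $k-2$ freshly added vertices. The decisive structural fact is that every vertex of $W_e^k$ is a cored (pendant-type) vertex of $G^k$: by the definition of the power, $f\cap W_e^k=\emptyset$ for every hyperedge $f\neq e^{(k)}$ in $F$, so $E_v=\{e^{(k)}\}$ for all $v\in W_e^k$. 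This is exactly the kind of ``isolated block of twin cored vertices attached to a single hyperedge'' that the earlier theorems exploit.

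To apply \Cref{cute2} I would set $e_0:=e^{(k)}$ and split it as $e_0=e_u\cup e_v$ with $e_u:=W_e^k$ and $e_v:=e=\{a,b\}$; then $e_u\cap e_v=\emptyset$, and condition (iii) of \Cref{cute2}, namely $f\cap e_u=\emptyset$ for all $f\neq e_0$, holds by the preceding paragraph. The cardinality hypothesis $|e_u|\ge 2$ requires $k-2\ge 2$, i.e. $k\ge 4$, which matches the stated range. With $\delta_V(v)=c_e$ on $W_e^k$ as assumed, \Cref{cute2} yields the eigenvalue $-\dfrac{\delta_E(e_0)}{c_e}\dfrac{1}{|e_0|}=-\dfrac{\delta_E(e^{(k)})}{c_e\,k}$ of $L$ with multiplicity at least $|e_u|-1=k-3$; passing to the general Laplacian $\mathfrak{L}=-L$ flips the sign and gives the stated value $\dfrac{\delta_E(e)}{c_e}\dfrac{1}{k}$ (using $|e^{(k)}|=k$, and reading $\delta_E(e)$ as the weight on the hyperedge $e^{(k)}$). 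For the adjacency statement I would invoke \Cref{adj_cute_1} verbatim with the same $e_0$, $e_u$, $e_v$, producing the eigenvalue $-\dfrac{\delta_E(e_0)}{c_e|e_0|^2}=-\dfrac{\delta_E(e^{(k)})}{c_e k^2}$ of $A$ with multiplicity $|e_0|-1$; restricting to the eigenvectors supported on $W_e^k$ gives multiplicity at least $k-3$.

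The one genuine subtlety, and the step I would treat most carefully, is the pendant-vertex refinement at the end: if the underlying edge $e=\{a,b\}$ contains a pendant vertex of $G$ (say $a$ with $|E_a|=1$ in $G$, so $a$ lies in no other edge), then in $G^k$ that vertex $a$ also satisfies $E_a=\{e^{(k)}\}$ and behaves like an additional cored/twin vertex of $e^{(k)}$. The point is that $a$ can be absorbed into the block $e_u$, enlarging it from $W_e^k$ to $W_e^k\cup\{a\}$ of size $k-1$, so the applicable theorems now give multiplicity at least $(k-1)-1=k-2$ instead of $k-3$. I would verify that $a$ is indeed a twin of the vertices in $W_e^k$ (they all lie in exactly the one hyperedge $e^{(k)}$) so that the eigenvectors of \Cref{cute2} and \Cref{adj_cute_1} may legitimately use $a$ as one of their $\pm1$ support points; the common constancy requirement $\delta_V=c_e$ must then be read as holding on $W_e^k\cup\{a\}$, which is the implicit hypothesis in the pendant case. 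No deep estimate is needed anywhere; the entire argument is a matter of correctly identifying the cored block and bookkeeping the resulting multiplicities.
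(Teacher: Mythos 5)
Your proposal is correct and follows essentially the same route as the paper: the paper justifies this proposition exactly by observing that $f\cap W_e^k=\emptyset$ for all $f(\ne e^{(k)})\in F$ and then invoking \Cref{cute2} and \Cref{adj_cute_1}, which is precisely your reduction with $e_0=e^{(k)}$, $e_u=W_e^k$ (enlarged by the pendant vertex when one exists, giving $|e_u|=k-1$ and multiplicity $k-2$). Your explicit handling of the notational slip $\delta_E(e)$ versus $\delta_E(e^{(k)})$, of the threshold $k\ge 4\Leftrightarrow|W_e^k|\ge 2$, and of the implicit requirement that $\delta_V$ be constant on $W_e^k\cup\{a\}$ in the pendant case only spells out what the paper leaves tacit.
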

As we have done before, 
here,  we can also compute the eigenvalues in a particular framework by choosing $\delta_E,\delta_V$ appropriately. 
\subsection{Spectra of Squid}
A squid is a $k$-uniform hypergraph $G(V,E)$ such that 
$$V:=\{v_0\}\cup(\bigcup\limits_{i=1}^{k-1}U_i)\text{~where,~} U_i=\{u_{ij}:j\in \mathbb{N}, 1\le j\le k\},$$
$$\text{~and~}E:=\{U_i\}_{i=1}^{k-1}\cup \{\{v_0\}\cup e_0\} \text{~where,~} e_0=\{u_{i1}:1\le i\le k-1\}.$$
We consider $\{v_0\}\cup e_0$ as a central hyperedge and all other hyperedge of squid as peripheral hyperedges (see \cite{MR3116407} for more details about squid).
Since, $e\cap (U_{i}\setminus\{u_{i1}\})=\emptyset$ for all $e(\ne U_i)\in E$, therefore, using \Cref{cute2} and \Cref{adj_cute_1} we have the following result. 
\begin{prop}
Suppose that $G(V,E)$ is a $k$-uniform squid. For any peripheral hyperedge $U_i$, if $ \delta_V(v)=c_i$ for all $v\in U_i$ then the eigenvalues of the general adjacency and Laplacian matrix of the squid is given below.
\begin{enumerate}
    \item  $\frac{\delta_E(U_i)}{c_i}\frac{1}{k}$ is an eigenvalue of the Laplacian operator associated to $G^k$ with multiplicity $k-2$.
   \item $-\frac{\delta_E(U_i)}{c_ik^2}$ is an eigenvalue of the general adjacency matrix of multiplicity $k-2$.
\end{enumerate}
\end{prop}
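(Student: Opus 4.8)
The plan is to read off both spectral statements as direct applications of \Cref{cute2} (for the Laplacian) and \Cref{adj_cute_1} (for the adjacency operator) to a single peripheral hyperedge $U_i$. The structural fact that makes this work, already flagged before the statement, is that among the $k$ vertices $u_{i1},u_{i2},\ldots,u_{ik}$ of a peripheral hyperedge $U_i$, only $u_{i1}$ is shared with another hyperedge, namely the central hyperedge $\{v_0\}\cup e_0$ (since $e_0=\{u_{j1}:1\le j\le k-1\}$), whereas $u_{i2},\ldots,u_{ik}$ belong to no hyperedge other than $U_i$. This is precisely the ``cored/twin'' configuration that the two cited theorems are designed to exploit.

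First I would fix $i$ and record the decomposition $U_i=e_u\cup e_v$ with $e_u:=\{u_{i2},u_{i3},\ldots,u_{ik}\}$ and $e_v:=\{u_{i1}\}$, so that $e_u\cap e_v=\emptyset$ and $|e_u|=k-1\ge 2$ (assuming $k\ge 3$, which is needed for the claimed multiplicity to be positive). Then I would verify the separation hypothesis $e\cap e_u=\emptyset$ for every hyperedge $e\neq U_i$: for $e=U_j$ with $j\neq i$ this is immediate from $U_i\cap U_j=\emptyset$, and for $e=\{v_0\}\cup e_0$ it holds because $e_0\cap U_i=\{u_{i1}\}$ and $v_0\notin U_i$. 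Together with the standing hypothesis $\delta_V(v)=c_i$ on $e_u\subset U_i$, these checks are exactly conditions (i)--(iii) of \Cref{cute2} and of \Cref{adj_cute_1}, with the distinguished hyperedge $e_0$ of those theorems taken to be $U_i$.

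With the hypotheses in place, \Cref{cute2} gives that $-\frac{\delta_E(U_i)}{c_i}\frac{1}{|U_i|}=-\frac{\delta_E(U_i)}{c_i}\frac{1}{k}$ is an eigenvalue of the diffusion operator $L$ of multiplicity at least $|e_u|-1=k-2$; since $\mathfrak{L}=-L$, the Laplacian eigenvalue is $\frac{\delta_E(U_i)}{c_i}\frac{1}{k}$ with the same multiplicity, which is part (1). Applying \Cref{adj_cute_1} to the identical decomposition yields $-\frac{\delta_E(U_i)}{c_i|U_i|^2}=-\frac{\delta_E(U_i)}{c_ik^2}$ as an eigenvalue of the adjacency operator $A$ of multiplicity at least $k-2$, which is part (2). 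There is no analytic obstacle here: the entire content is the verification above, and the eigenvectors are the explicit difference functions $y_j$ built in the proofs of the two invoked theorems. The one point deserving care is the multiplicity claim: both theorems deliver only the lower bound ``at least $k-2$,'' so the equality stated should be read in that sense; pinning down exact multiplicities would instead require an eigenvalue count exhausting the $|V|=1+(k-1)k$ dimensions across the $k-1$ peripheral hyperedges, the central hyperedge, and the vector $\mathbf{1}$, which is not needed for the assertion as phrased.
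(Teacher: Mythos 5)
Your proof is correct and takes essentially the same route as the paper: the paper's entire justification is the observation that $e\cap (U_{i}\setminus\{u_{i1}\})=\emptyset$ for all $e(\ne U_i)\in E$, followed by an appeal to \Cref{cute2} and \Cref{adj_cute_1} with exactly your decomposition $e_u=U_i\setminus\{u_{i1}\}$, $e_v=\{u_{i1}\}$. Your explicit verification of the hypotheses, the caveat that $k\ge 3$ is needed for $|e_u|\ge 2$, and the reading of the multiplicity as a lower bound are all sound refinements of what the paper leaves implicit.
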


\subsection{The network of disease propagation}
Multi-body interactions are crucial to study disease propagation. In past few years, using of hypergraphs made the disease propagation models more realistic, see \cite{higham2021epidemics}. Here the vertices of the hypergraph $G(V,E)$ represent the individuals and hyperedges are the collection of individuals who are known to interact as a group.  We summarize below the applicability of our work in this context. 
    \begin{enumerate}
        \item If we set $\delta_E(e)=\beta |e|^2$ and $\delta_V(v)=1$ then according to the general infection model provided in \cite[p.6 , Section-3.2.]{higham2021epidemics}, a susceptible node $v$ becomes infectious with the rate $(A_G(\bar{f}(x_t)))(v)$. Here, $x:V\times T
               \to \mathbb{R}^+ $ is a function where $T$ is the domain of time and for any $(v,t)\in V\times T $, the functional value of $x(v,t)$ is denoted by $x_t(v)$. That is, $x_t\in{\mathbb{R}^+}^V$ is defined as $x_t(v):=x(v,t)$. 
               In addition, the function $f:\mathbb{R}^+\to \mathbb{R}^+$ regulates the overall  infectiousness of the disease and $\bar{f}:{\mathbb{R}^+}^V\to {\mathbb{R}^+}^V$ is defined as $\bar{f}(x)=\{f(x(v))\}_{v\in V}$.  Similar infection rate is also reported  in \cite{MR3494570}. Later in partitioned hypergraph model \cite[p.6 , Section-3.3.]{higham2021epidemics}, the hypergraph $G(V,E)$ is partitioned in to $K$ disjoint hypergraphs $\{G_i(V_i,E_i)\}_{i=1}^K$. According to this model the infection rate of the node $v$ at time $t$ is  $\sum\limits_{i=1}^KA_{G_i}(\bar{f_i}(x_t)))(v)$, where the function $f_i:\mathbb{R}^+\to \mathbb{R}^+$ regulates the overall  infectiousness of the disease in the $i$-th partition.
               \item To study random infection rates, in \cite[p.6 , Section-4.]{higham2021epidemics}, the mean field approximation is considered. According to that approach, the infection rate of a node $v$ at time $t$ is $(A_G(\bar{f}(P_t)))(v)$, where $p_t(v)$ is the probability of being the node $v$ is infected at time $t$ and $P_t:=\{p_t(v)\}_{v\in V}$.
    \end{enumerate}

\subsection{Dynamical network}A \textit{dynamical network} is a network of evolving \textit{dynamical systems}. More precisely, a dynamical system is a system in which a function describes the evolution of a point in a geometric space with the flow of time. In a dynamical network, several dynamical systems are coupled through an underlying network in such a way that two neighbouring dynamical systems influence the dynamics of each other. The underlying network may be a graph\cite{MR3730470} or a hypergraph\cite{banerjee2020synchronization,MR4121260,carletti2020dynamical}. To discuss  coupled dynamics on hypergraphs the adjacency operator $A_G$ is used in \cite[equation-(24),(27)]{MR4121260} with $\delta_E(e)=|e|^2 $ and $\delta_V(v)=1$. In \cite{banerjee2020synchronization}, the diffusion operator $L_G$ is used with $\delta_E(e)=w(e)\frac{|e|^2}{|e|-1} $ in order to discuss synchronization in dynamical networks on hypergraph. In \cite[Equation-3]{carletti2020dynamical}, one variant of the general Laplacian operator of hypergraph, $\mathfrak{L}$ is used in the model of dynamical systems on hypergraphs with $\delta_E(e)=(|e|-1)|e|^2$ and $\delta_V(v)=1$. Considering the use of different variant of the diffusion operator $L_G$ in distinct dynamical networks  with hypergraph topology, we  define a general discrete dynamical network model as
\begin{equation}
    x_{n+1}=f(x_{n})+\epsilon (L_G(g(x_n))),
\end{equation}
where for any discrete time $n\in \mathbb{N}$, $x_n\in (\mathbb{R}^V$) is a function such that $x_n(v)$ is the state of the $n$-th node. Both $f:\mathbb{R}^V\to \mathbb{R}^V$ and $g:\mathbb{R}^V\to \mathbb{R}^V$ are differentiable functions, regulating the dynamics of all the node. The positive real $\epsilon$ is the coupling strength. Similarly, the continuous model can be defined as 
\begin{equation}
    \dot{x}_t=f(x_t)+\epsilon (L_G(g(x_t))),
\end{equation}
where $x_t\in \mathbb{R}^V$ is such that $x_t(v)$ is the state of the $v$-th node at time $t$ and $\dot{x}\in \mathbb{R}^V$ is defined by $\dot{x_t}(v)=\frac{dx_t(v)}{dt} $.
\subsection{Random walk on hypergraphs}
A random walk is a sequence of randomly taken successive steps by a walker in a mathematical space. If the mathematical space is the set of all the vertices $V$ of a hypergraph $G(V,E)$ then the random walk is referred as the random walk on the hypergraph. Thus, a random walk on a hypergraph $G(V,E)$ is a sequence of vertices $v_1,v_2,\ldots, v_k$ such that $v_i $ is the $i $-th step of the random walk. The whole theory pivot around the \textit{Transition probability}, $(P_G)_{uv}=prob(v_{i+1}=v|v_i=u)$, which is independent of $i$ and depends on the underlying hypergraph. Since, $\bigcup\limits_{v\in V}\{(v_{i+1}=v|v_i=u)\} $ is a certain event, $\sum\limits_{v\in V}(P_G)_{uv}=1$ for all $u\in V$. We  define ${P_G}_{uv}$ as 
$${P_G}_{uv}=
\begin{cases}
\frac{1}{r(u)}\sum\limits_{e\in E_u\cap E_v}\frac{\delta_E(e)}{\delta_V(u)}\frac{1}{|e|^2} & \text{~if~} u\neq v,\\
0& \text{~otherwise.~}
\end{cases} $$
We summarise below some crucial observations.
\begin{itemize}
    \item[(1)]Since, $ \sum\limits_{v\in V}\sum\limits_{e\in E_u\cap E_v}\frac{\delta_E(e)}{\delta_V(u)}\frac{1}{|e|^2}=\sum\limits_{e\in E_u}\frac{\delta_E(e)}{\delta_V(u)}\frac{|e|-1}{|e|^2}=r(u)$, we have $\sum\limits_{v\in V}(P_G)_{uv}=1$. 
    \item[(2)]  Suppose that there exists no isolated vertex in $G$, i.e.,  $E_v\neq \emptyset$ for all $v\in V$. So, $r(v)\neq 0$ for all $v\in V$ and this allow us to define the inner product $(\cdot ,\cdot)_R$ on $\mathbb{R}^V$ as $(x,y)_R:=\sum\limits_{u\in V}r(u)\delta_V(u)x(u)y(u)$. If $\Delta=\mathcal{I}-P_G$, where $\mathcal{I}:\mathbb{R}^V\to \mathbb{R}^V$ is the identity operator on $\mathbb{R}^V$, then $0$ is an eigenvalue of $\Delta$ with eigenvector $\mathbf{1}$.  Moreover, $(\Delta x, y)_R=\sum\limits_{\{u,v\}\subset V}\sum\limits_{e\in E_u\cap E_v}\frac{\delta_E(e)}{|e|^2}(x(u)-x(v))^2\le 2(x,y)_R$. Therefore, $\Delta$ is a positive semidefinite operator and all the eigenvalues of $\Delta$ lies in $[0,2) $. Thus, all the absolute values of all the eigenvalues of $P_G$ lies in $[0,1]$. Moreover, if the hypergraph $G$ is connected then except the eigenvalue $0$ corresponding to the eigenvector $\mathbf{1}$, the absolute value of all other eigenvalues of $P_G$ lie in $(0,1)$.  
    \item[(3)]Note that $ (\Delta x, y)_R=\sum\limits_{\{u,v\}\subset V}\sum\limits_{e\in E_u\cap E_v}\frac{\delta_E(e)}{|e|^2}(x(u)-x(v))^2=(\Delta y,x)_R=(x,\Delta y)_R$. Thus, $\Delta $ is self-adjoint. Therefore, $P_G$ is also self-adjoint.
    \item[(4)] Suppose that $\{x_n\}_{n\in\mathbb{N}}$ is a sequence in $\mathbb{R}^V$ such that $x_{n+1}=P_G(x_n)$ and the underlying hypergraph is connected. Evidently, $x_{n+1}=P_G^n(x_1)$. Since except the eigenvalue $0$ corresponding to the eigenvector $\mathbf{1}$, the absolute value of all the eigenvalues of $P_G$ lie in $(0,1)$, by spectral decomposition, $\lim\limits_{n\to\infty}x_n$ is the projection of the initial state  $x_1$ along the vector $\mathbf{1}$. Therefore, $\lim\limits_{n\to\infty}x_n=\frac{(x_1,\mathbf{1})_R}{\sqrt{(\mathbf{1},\mathbf{1})_R}}\mathbf{1}$.
    
    Note that, the properties of  general normalized Laplacian operator $\Tilde{\mathfrak{L}}$ suggest that we can replace $\Delta$ by $\Tilde{\mathfrak{L}}$.
\end{itemize}
We end this article with the following Remark.
\begin{rem}
Since $\delta_V \in {\mathbb R^+}^V$ and $\delta_E \in {\mathbb R^+}^E$, there exists uncountable choices for $\delta_E,\delta_V$. Each choice is going to give us a framework for the operators associated to a hypergraph. Although some results (see \Cref{cute-new}, \Cref{cute1}, \Cref{adj_cute_2}, \Cref{cute3}) imposes such conditions on $\delta_V$, that very few choices left for $\delta_V$ but since very few conditions are imposed on $\delta_E$, one still has uncountable choices for $\delta_E$. Therefore, our results are valid for uncountable number of frameworks of operators. Two of these frameworks are common in literature and considered in \cite{MR4208993} and \cite{rodriguez2003Laplacian,rodriguez2009Laplacian,bretto2013hypergraph}.
\end{rem}
\section*{Acknowledgement}The work of the author PARUI is supported by University Grants Commission, India (Beneficiary Code/Flag: 	BININ00965055 A). PARUI is sincerely thankful to Rajiv Mishra, Gargi Ghosh for fruitful discussions. 
	\bibliographystyle{siam}
	
\end{document}